\newtheorem{theorem}{Theorem}
\numberwithin{theorem}{section}
\newtheorem{corollary}[theorem]{Corollary}
\newtheorem{lemma}[theorem]{Lemma}
\newtheorem{proposition}[theorem]{Proposition}
\theoremstyle{definition}
\newtheorem{definition}[theorem]{Definition}
\newtheorem{example}[theorem]{Example}
\newcommand{\supp}{\operatorname{supp}}
\newcommand{\Supp}{\operatorname{Supp}}
\newcommand{\tr}{\operatorname{Tr}}
\newcommand{\To}{\Rightarrow}
\newcommand{\rng}{\operatorname{rng}}
\newcommand{\en}{\operatorname{en}}
\newcommand{\nf}{=_{\operatorname{NF}}}
\newcommand{\rca}{\mathbf{RCA}}
\newcommand{\aca}{\mathbf{ACA}}
\newcommand{\lo}{\mathbf{LO}}
\newcommand{\kb}{\operatorname{KB}}
\newcommand{\len}{\operatorname{len}}
\title[Well ordering principles and $\Pi^1_4$-statements]{Well ordering principles and $\Pi^1_4$-statements:\\ a pilot study}
\author{Anton Freund}
\begin{document}

\begin{abstract}
In previous work, the author has shown that $\Pi^1_1$-induction along~$\mathbb N$ is equivalent to a suitable formalization of the statement that every normal function on the ordinals has a fixed point. More precisely, this was proved for a representation of normal functions in terms of J.-Y.~Girard's dilators, which are particularly uniform transformations of well orders. The present paper works on the next type level and considers uniform transformations of dilators, which are called $2$-ptykes. We show that $\Pi^1_2$-induction along~$\mathbb N$ is equivalent to the existence of fixed points for all $2$-ptykes that satisfy a certain normality condition. Beyond this specific result, the paper paves the way for the analysis of further $\Pi^1_4$-statements in terms of well ordering principles.
\end{abstract}

\keywords{Well ordering principles, reverse mathematics, $\Pi^1_4$-statements, dilators, ptykes, $\Pi^1_2$-induction}
\subjclass[2010]{03B30, 03D65, 03F15, 03F35}

\maketitle

\section{Introduction}

A classical result of J.-Y.~Girard~\cite{girard87} and J.~Hirst~\cite{hirst94} shows that the following are equivalent over the usual base theory~$\rca_0$ of reverse mathematics (see~\cite{simpson09} for an introduction to the latter):
\begin{enumerate}[label=(\roman*)]
\item arithmetical comprehension (i.\,e.~the principal axiom of $\aca_0$),
\item when $(X,\leq_X)$ is a (countable) well order, then so is
\begin{equation*}
\omega^X:=\{\langle x_0,\dots,x_{n-1}\rangle\,|\,x_0,\dots,x_{n-1}\in X\text{ and }x_{n-1}\leq_X\dots\leq_X x_0\}
\end{equation*}
with the lexicographic order.
\end{enumerate}
If we think of $\langle x_0,\dots,x_{n-1}\rangle\in\omega^X$ as the Cantor normal form $\omega^{x_0}+\dots+\omega^{x_{n-1}}$, we see that $X\mapsto\omega^X$ represents the familiar operation from ordinal arithmetic. Note that the base theory $\rca_0$ proves that $\omega^X$ exists and is a linear order (whenever the same holds for~$X$). A~statement such as~(ii), which asserts that some computable transformation preserves well foundedness, is called a well ordering principle.

Many important principles of reverse mathematics (about iterated arithmetical comprehension~\cite{marcone-montalban,rathjen-weiermann-atr} and the existence of $\omega$-models~\cite{rathjen-atr,rathjen-model-bi,thomson-thesis,thomson-rathjen-Pi-1-1}) have been characterized in terms of well ordering principles. At least in principle, there is no limitation on the consistency strength of statements that can be characterized in this way. There is, however, a limitation in terms of logical complexity: For any computable transformation of orders, the statement that this transformation preserves well foundedness has complexity~$\Pi^1_2$. Hence a genuine $\Pi^1_3$-statement---such as the principle of $\Pi^1_1$-comprehension---cannot be equivalent to a well ordering principle of this form.

In order to characterize statements of higher logical complexity, one needs to consider transformations of higher type. Recall that the usual notion of computability on the natural numbers extends to higher types if one restricts to functionals that are continuous in a suitable sense. In the present context, the continuous transformations between well orders are the dilators of J.-Y.~Girard~\cite{girard-pi2} (see below for details). Girard has sketched a proof that $\Pi^1_1$-comprehension is equivalent to the statement that his functor~$\Lambda$ preserves dilators~\cite{girard-book-part2}. As far as the present author is aware, the details of this proof have not been worked out. However, the present author~\cite{freund-equivalence,freund-categorical,freund-computable} has shown that $\Pi^1_1$-comprehension is equivalent to the following different well ordering principle: For every dilator~$D$ there is a well order~$X$ and a collapse $\vartheta:D(X)\to X$ that is almost order preserving in a suitable sense. Here the order~$X$ and the function~$\vartheta$ can be computed from a representation of~$D$. Only the fact that $X$ is well founded cannot be proved in~$\rca_0$.

Well ordering principles are relevant because they allow to apply methods from ordinal analysis to questions of reverse mathematics. Together with M.~Rathjen and A.~Weiermann, the present author has used his result on a collapse $\vartheta:D(X)\to X$ to show that $\Pi^1_1$-comprehension is equivalent to a uniform version of Kruskal's tree theorem~\cite{frw-kruskal}. This implies that the uniform Kruskal theorem exhausts the full strength of C.~Nash-Williams's famous ``minimal bad sequence argument"~\cite{nash-williams63}, in contrast to the usual Kruskal theorem. The present author has also shown that iterated applications of the uniform Kruskal theorem yield a systematic reconstruction of H.~Friedman's gap condition~\cite{freund-kruskal-gap}.

Much of reverse mathematics concerns statements of complexity $\Pi^1_2$ and~$\Pi^1_3$. In some applications, however, $\Pi^1_4$-statements play a central role. An example is the principle of~$\Pi^1_2$-bar induction, which is used in the proof of the graph minor theorem~\cite{krombholz-rathjen-GM}. The strongest ordinal analysis due to Rathjen~\cite{rathjen-Pi12} is also concerned with a $\Pi^1_4$-statement, namely with the principle of $\Pi^1_2$-comprehension. For these reasons it seems particularly relevant to extend well ordering principles to the level of $\Pi^1_4$-statements, which requires another step in the type structure. As far as the author is aware, the present paper is the first to make this step. Since the paper is intended as a pilot study, we will consider a particularly simple $\Pi^1_4$-statement, namely the principle of $\Pi^1_2$-induction along~$\mathbb N$. Even though the latter was chosen for its simplicity, it is of independent foundational interest: over $\mathbf{\Pi^1_1}\textbf{-CA}_0$, the principle of $\Pi^1_2$-induction along~$\mathbb N$ is needed to show that the finite trees with Friedman's gap condition form a well quasi order~\cite{simpson85}.

To state our result, we need some terminology; full details of the following can be found in Sections~\ref{sect:cat-dilators},~\ref{sect:ptykes} and~\ref{sect:normality-ptykes}. A predilator is an endofunctor of linear orders that preserves direct limits, pullbacks and the order on morphisms (pointwise domination). If it preserves well foundedness, then it is called a dilator. By a morphism between (pre-)dilators $D$ and~$E$ we mean a natural transformation $\mu:D\To E$ of functors. Such a transformation consists of an order embedding $\mu_X:D(X)\to E(X)$ for each linear order~$X$. If the image of each embedding $\mu_X$ is an initial segment of the order~$E(X)$, then we call $\mu$ a segment. A $2$-preptyx is an endofunctor of predilators that preserves direct limits and pullbacks. If it preserves dilators, then it is called a $2$-ptyx (plural: ptykes). The number~$2$ indicates the type level, where we think of well orders and dilators as objects of type zero and one, respectively; it will be omitted where the context allows it. We say that a $2$-(pre-)ptyx~$P$ is normal if~$P(\mu):P(D)\To P(E)$ is a segment for any segment~$\mu:D\To E$. Preservation of direct limits is a continuity property, which allows us to represent predilators and preptykes in second order arithmetic. Based on such a representation, we will show that the following are equivalent over~$\aca_0$ (cf.~Theorem~\ref{thm:main} below):
\begin{enumerate}[label=(\roman*)]
\item the principle of $\Pi^1_2$-induction along~$\mathbb N$,
\item for any normal $2$-ptyx~$P$ there is a dilator~$D\cong P(D)$.
\end{enumerate}
Since~$P$ preserves direct limits, a predilator~$D_P\cong P(D_P)$ can be constructed as the direct limit of the diagram
\begin{equation*}
D^0_P\xRightarrow{\mu^0}D^1_P:=P(D^0_P)\xRightarrow{\mu^1:=P(\mu^0)}D^2_P:=P(D^1_P)\xRightarrow{\mu^2:=P(\mu^1)}\ldots,
\end{equation*}
where $D^0_P$ is the constant dilator with value~$0$ (the empty order). In Section~\ref{sect:construct-fixed-points} we will see that this construction can be implemented in~$\aca_0$ (and presumably also in~$\rca_0$). Hence the strength of~(ii) does not lie in the existence of~$D\cong P(D)$, but rather in the assertion that~$D$ is a dilator (i.\,e., preserves well foundedness). In order to prove that~(i) implies~(ii), one uses~$\Pi^1_2$-induction to show that each predilator~$D^n_P$ in the above diagram is a dilator. The assumption that~$P$ is normal is needed to conclude that the same holds for the direct limit (by Example~\ref{ex:ptyx-succ}, which shows that $P(D):=(D+1)(X):=D(X)+1$ defines a $2$-ptyx that is not normal and cannot have a dilator as a fixed point).

In order to explain the proof that~(ii) implies~(i), we recall another result on the level of~$\Pi^1_3$-statements. A function~$f$ on the ordinals is called normal if it is strictly increasing and continuous at limits (i.\,e.,~we demand $f(\lambda)=\sup_{\alpha<\lambda}f(\alpha)$ for any limit ordinal~$\lambda$). It is well known that any normal function~$f$ has a proper class of fixed points. These are enumerated by another normal function~$f'$, which is called the derivative of~$f$. Together with M.~Rathjen, the present author has shown that $\Pi^1_1$-bar induction is equivalent to a suitable formalization of the statement that every normal function has a derivative~\cite{freund-rathjen_derivatives,freund-ordinal-exponentiation}. Furthermore, $\Pi^1_1$-induction along~$\mathbb N$ is equivalent to the statement that each normal function has at least one fixed point~\cite{freund-single-fixed-point}. In the following, we recall the intuition behind this result.

Given a linear order~$X$ and an ordinal~$\alpha$, we write $X\preceq\alpha$ to express that~$X$ is well founded with order type at most~$\alpha$. On an intuitive level, the fact that well foundedness is~$\Pi^1_1$-complete allows us to write any instance of $\Pi^1_1$-induction as
\begin{equation*}
X_0\preceq\alpha_0\land\forall_{n\in\mathbb N}(\exists_\alpha\, X_n\preceq\alpha\to\exists_\beta\,X_{n+1}\preceq\beta)\to\forall_{n\in\mathbb N}\exists_\gamma\, X_n\preceq\gamma,
\end{equation*}
for some family of linear orders~$X_n$. Assume that the second conjunct of the premise is witnessed by a function $h_0$ such that $X_n\preceq\alpha$ implies $X_{n+1}\preceq h_0(n,\alpha)$. If we set~$h(\alpha):=\sup_{n\in\mathbb N}h_0(n,\alpha)$, then we obtain
\begin{equation*}
\forall_{n\in\mathbb N}\forall_\alpha(X_n\preceq\alpha\to X_{n+1}\preceq h(\alpha)).
\end{equation*}
As~$h$ may not be normal, we consider the normal function~$g$ with
\begin{equation*}
g(\alpha)=\sum_{\gamma<\alpha}h(\gamma)+1.
\end{equation*}
More formally, this function can be defined by the recursive clauses $g(0)=0$, $g(\alpha+1)=g(\alpha)+h(\alpha)+1$ and $g(\lambda)=\sup_{\gamma<\lambda}g(\gamma)$ for $\lambda$ limit. We observe
\begin{equation*}
\gamma+1\leq\alpha\quad\Rightarrow\quad h(\gamma)+1\leq g(\alpha).
\end{equation*}
To incorporate the premise $X_0\preceq\alpha_0$ of our induction statement, we transform~$g$ into another normal function~$f$ with $f(\alpha):=\alpha_0+1+g(\alpha)$. If $\alpha=f(\alpha)$ is a fixed point, then a straightforward induction over~$n\in\mathbb N$ yields
\begin{equation*}
\forall_{n\in\mathbb N}\exists_\gamma(X_n\preceq\gamma\land\gamma+1\leq\alpha).
\end{equation*}
In particular we have $\forall_{n\in\mathbb N}X_n\preceq\alpha$, which entails the conclusion of the induction statement above. As shown in~\cite{freund-single-fixed-point}, the given argument can be formalized in terms of dilators. In that setting, the induction at the end of the argument is not needed: it can be replaced by a construction that builds embeddings $X_n+1\hookrightarrow\alpha$ by primitive recursion over elements of~$X_n$, simultaneously for all~$n\in\mathbb N$. The formalized argument deduces $\Pi^1_1$-induction along~$\mathbb N$ from the assumption that every normal function (represented by a dilator) has a fixed point.

In order to deduce $\Pi^1_2$-induction, we will lift the previous argument to the next type level. This relies, first of all, on Girard's result~\cite{girard-book-part2} that the notion of dilator is $\Pi^1_2$-complete. Given an arbitrary $\Pi^1_2$-formula~$\psi$, one can thus construct a family of pre\-dilators~$D^n_\psi$ such that induction for~$\psi$ is equivalent to the following statement: If $D^0_\psi$ is a dilator and $D^{n+1}_\psi$ is a dilator whenever the same holds for~$D^n_\psi$, then~$D^n_\psi$ is a dilator for every~$n\in\mathbb N$. Assuming the premise of this implication, we will be able to construct a $2$-ptyx~$P$ that admits a morphism
\begin{equation*}
D^{n+1}_\psi\To P(D^n_\psi)
\end{equation*}
for each~$n\in\mathbb N$. Note that $P$ corresponds to the function~$h$ from the argument above. Next, we need to transform~$P$ into a normal $2$-ptyx~$P^*$ that corresponds to the normal function~$g$. In our opinion, it is somewhat surprising that this is possible: The construction of~$g$ relies on the fact that each~$\alpha$ has a well ordered set of predecessors. A priori, this fact seems specific to the ordinals. However, Girard has discovered an analogous result on the next type level: Let us temporarily write $D\ll E$ to indicate that there is a segment $\mu:D\To E$ that is not an isomorphism. If $E$ is a predilator, then the isomorphism classes of predilators~$D\ll E$ form a set on which $\ll$ is linear (see~\cite[Lemma~2.11]{girard-normann85}). It is straightforward to define a pointwise sum of predilators. On an informal level, this allows us to set
\begin{equation*}
P^*(E):=\sum_{D\ll E}P(D)+1,
\end{equation*}
where $1$ refers to the constant dilator with that value. In view of $D\ll D+1$, we obtain a morphism
\begin{equation*}
P(D)+1\To P^*(D+1)
\end{equation*}
for each predilator~$D$. This completes the reconstruction of $g$ on the next type level. To define a normal $2$-ptyx $P^+$ that corresponds to the normal function~$f$, it suffices to set $P^+(E):=D_0+1+P^*(E)$. Given a dilator $E\cong P^+(E)$, one can construct morphisms $D_n+1\To E$ by (effective) recursion on~$n\in\mathbb N$, as on the previous type level. These ensure that the predilators $D_n$ are dilators, which is the conclusion of $\Pi^1_2$-induction. The given argument is made precise in Sections~\ref{sect:normality-ptykes} and~\ref{sect:fixed-point-to-induction}.

\section{The category of dilators}\label{sect:cat-dilators}

In this section we recall the definition and basic theory of dilators, both of which are due to J.-Y.~Girard~\cite{girard-pi2}. As the latter has observed, the continuity properties of dilators allow to represent them in second order arithmetic. Details of such a representation have been worked out in~\cite[Section~2]{freund-computable} and will also be recalled. Even though the material is known, this section plays a crucial role in the context of our paper: it fixes an efficient formalism upon which we can base our investigation of ptykes. The section also ensures that our paper is reasonably self contained.

Let $\lo$ be the category of linear orders, with the order embeddings (strictly increasing functions) as morphisms. For morphisms $f,g:X\to Y$ we abbreviate
\begin{equation*}
f\leq g\quad:\Leftrightarrow\quad f(x)\leq_Y g(x)\text{ for all $x\in X$}.
\end{equation*}
We say that a functor~$D:\lo\to\lo$ is monotone if $f\leq g$ implies $D(f)\leq D(g)$. The forgetful functor to the underlying set of an order will be omitted; conversely, subsets of the underlying set will often be considered as suborders. Given a set $X$, we write $[X]^{<\omega}$ for the set of its finite subsets. In order to obtain a functor, we define $[f]^{<\omega}(a):=\{f(x)\,|\,x\in a\}\in[Y]^{<\omega}$ for $f:X\to Y$ and $a\in[X]^{<\omega}$. Let us also write $\rng(f):=\{f(x)\,|\,x\in X\}\subseteq Y$ for the range of a function $f:X\to Y$.

\begin{definition}\label{def:dilator}
A predilator consists of
\begin{enumerate}[label=(\roman*)]
\item a monotone functor $D:\lo\to\lo$ and
\item a natural transformation $\supp:D\To[\cdot]^{<\omega}$ such that we have
\begin{equation*}
\{\sigma\in D(Y)\,|\,\supp_Y(\sigma)\subseteq\rng(f)\}\subseteq\rng(D(f))
\end{equation*}
for any morphism $f:X\to Y$.
\end{enumerate}
If $D(X)$ is well founded for every well order~$X$, then $D=(D,\supp)$ is a dilator.
\end{definition}

To give an example, we point out that the transformation $X\mapsto\omega^X=:D(X)$ from the introduction becomes a dilator if we set
\begin{align*}
D(f)(\langle x_0,\dots,x_{n-1}\rangle)&:=\langle f(x_0),\dots,f(x_{n-1})\rangle,\\
\supp_X(\langle x_0,\dots,x_{n-1}\rangle)&:=\{x_0,\dots,x_{n-1}\}.
\end{align*}
The inclusion in part~(ii) of Definition~\ref{def:dilator} will be called the support condition. Note that its converse is automatic: for $\sigma=D(f)(\sigma_0)\in\rng(D(f))$, naturality entails
\begin{equation*}
\supp_Y(\sigma)=\supp_Y(D(f)(\sigma_0))=[f]^{<\omega}(\supp_X(\sigma_0))\subseteq\rng(f).
\end{equation*}
If we take $f$ to be an inclusion $\iota_b^Y:X=b\hookrightarrow Y$ with $\rng(\iota_b^Y)=b\subseteq Y$, then we see that $\supp_Y(\sigma)$ is determined as the smallest set $b\in[Y]^{<\omega}$ with $\sigma\in\rng(D(\iota_b^Y))$. The fact that there is always a smallest finite set with this property implies that $D$ preserves direct limits and pullbacks; conversely, if $D:\lo\to\lo$ preserves direct limits and pullbacks, there is a unique natural transformation $\supp:D\To[\cdot]^{<\omega}$ that satisfies the support condition (essentially by Girard's normal form theorem~\cite{girard-pi2}; see~also~\cite[Remark~2.2.2]{freund-thesis} and the proof of Proposition~\ref{prop:ptykes-direct-limit} below). This shows that the given definition of dilator is equivalent to the original one by Girard. The condition that $D$ must be monotone is automatic when $X\mapsto D(X)$ preserves well foundedness (by~\cite[Proposition~2.3.10]{girard-pi2}; see~\cite[Lemma~5.3]{frw-kruskal} for a proof that uses our terminology). It has been omitted in previous work by the present author but will be important for this paper (see the proof of Lemma~\ref{lem:linearize-ll}).

The present paper considers dilators in second order arithmetic. When we speak of predilators in the sense of Definition~\ref{def:dilator}, we will always assume that they are given by $\Delta^0_1$-definitions of the relations
\begin{equation*}
\sigma\in D(X),\quad\sigma<_{D(X)}\tau,\quad D(f)(\sigma)=\tau,\quad\supp_X(\sigma)=a.
\end{equation*}
Here $\sigma,\tau$ and $a$ (which codes a finite set) range over natural numbers, while $X$ and $f:X\to Y$ are represented by subsets of~$\mathbb N$. In particular, this means that we interpret $\lo$ as the category of countable linear orders (with underlying sets contained in $\mathbb N$). We can use number and set parameters to quantify over families of predilators. In fact, we will see that there is a single $\Delta^0_1$-definable family that is universal in the sense that any predilator is isomorphic to one in this family. Our universal family will be parameterized by subsets of~$\mathbb N$, which are called coded predilators. The existence of such a universal family will be essential for our approach to ptykes, which are supposed to take arbitrary predilators as arguments. As explained after Theorem~\ref{thm:class-coded-equiv} below, the restriction to $\Delta^0_1$-definable predilators is inessential in a certain sense.

In order to construct a universal family of predilators, one exploits the fact that these are essentially determined by their restrictions to a small category~$\lo_0$. The objects of $\lo_0$ are the finite sets $\{0,\dots,n-1\}=:n$ with the usual linear order; the morphisms are the strictly increasing functions between them.

\begin{definition}\label{def:coded-predil}
A coded predilator consists of a monotone functor $D:\lo_0\To\lo$ and a natural transformation $\supp:D\To[\cdot]^{<\omega}$, such that the support condition from part~(ii) of Definition~\ref{def:dilator} is satisfied for all morphisms in~$\lo_0$.
\end{definition}

Recall that the underlying set of each linear order~$D(n)\in\lo$ is contained in~$\mathbb N$. Based on a suitable encoding of finite objects (such as morphisms $f:m\to n$ and sets $a\subseteq n=\{0,\dots,n-1\}$), we assume that coded predilators are given as sets
\begin{multline*}
D=\{(0,n,\sigma)\,|\,\sigma\in D(n)\}\cup\{(1,n,\sigma,\tau)\,|\,\sigma<_{D(n)}\tau\}\cup{}\\
{}\cup\{(2,f,\sigma,\tau)\,|\,D(f)(\sigma)=\tau\}\cup\{(3,n,\sigma,a)\,|\,\supp_n(\sigma)=a\}\subseteq\mathbb N.
\end{multline*}
For a coded dilator~$D$, this means that an expression such as $\sigma\in D(n)$ is an abbreviation for the $\Delta^0_1$-formula $(0,n,\sigma)\in D$. From now on we speak of class-sized predilators when we want to refer to predilators in the sense of Definition~\ref{def:dilator}. Recall that any class-sized dilator~$D$ is given by a $\Delta^0_1$-formula. Working over~$\rca_0$, this ensures that the obvious restriction~$D\!\restriction\!\lo_0$ exists as a set.

\begin{lemma}\label{lem:class-to-coded-predil}
If $D$ is a class-sized predilator, then $D\!\restriction\!\lo_0$ is a coded predilator.
\end{lemma}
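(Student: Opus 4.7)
The statement is almost a tautology at the conceptual level: restricting a functor to a full subcategory yields a functor, natural transformations restrict, and the support condition over all of $\lo$ is strictly stronger than the support condition over $\lo_0$. The only real content lies in verifying that the restriction actually \emph{exists as a set} in the base theory, since coded predilators are required to be subsets of $\mathbb N$ while class-sized predilators are only guaranteed to be $\Delta^0_1$-definable. So my plan is to first carry out this set-existence check, and then verify the three clauses of Definition~\ref{def:coded-predil} mechanically.

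First, I would recall the standing convention stated just before the lemma: any class-sized predilator $D$ is presented by $\Delta^0_1$-formulas defining the relations $\sigma\in D(X)$, $\sigma<_{D(X)}\tau$, $D(f)(\sigma)=\tau$, and $\supp_X(\sigma)=a$, with set-parameters for~$X$ and~$f$. For objects $n\in\lo_0$ and morphisms $f:m\to n$ in $\lo_0$, the parameters are finite and uniformly encoded by natural numbers, so substituting them into the defining formulas yields $\Delta^0_1$ predicates on~$\mathbb N$. By $\Delta^0_1$-comprehension, available in $\rca_0$, the four sets whose union is displayed just before the lemma exist; their union is the desired set $D\!\restriction\!\lo_0$.

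Next I would check the three items of Definition~\ref{def:coded-predil}. \emph{Functoriality}: for each $n$, $D(n)\in\lo$ by hypothesis, and since each finite object $n$ of $\lo_0$ has underlying set $\{0,\dots,n-1\}\subseteq\mathbb N$, the set-coded object $D(n)$ lies in our countable category~$\lo$; morphisms $f:m\to n$ in $\lo_0$ are in particular morphisms of $\lo$, so $D(f):D(m)\to D(n)$ is an order embedding and composition and identities are preserved. \emph{Monotonicity}: if $f\leq g$ as morphisms $m\to n$ in $\lo_0$, the same inequality holds in $\lo$, whence $D(f)\leq D(g)$. \emph{Natural transformation $\supp$}: the naturality squares for morphisms in $\lo_0$ are a subfamily of the squares for morphisms in $\lo$. \emph{Support condition}: Definition~\ref{def:coded-predil} only requires the inclusion $\{\sigma\in D(n)\,|\,\supp_n(\sigma)\subseteq\rng(f)\}\subseteq\rng(D(f))$ for $f:m\to n$ in $\lo_0$, which is the restriction to $\lo_0$ of the condition assumed in Definition~\ref{def:dilator}(ii).

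There is no real obstacle here; the only place where one must pay any attention is the initial step of turning $\Delta^0_1$-definability into actual set-existence via $\Delta^0_1$-comprehension, and the observation that coding conventions for finite morphisms $f:m\to n$ are uniform enough that the predicate $D(f)(\sigma)=\tau$, viewed as a relation on the triple $(f,\sigma,\tau)\in\mathbb N^3$, is $\Delta^0_1$ in the set-parameter defining~$D$.
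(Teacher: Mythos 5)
Your proposal is correct and matches the paper's treatment: the paper gives no separate proof of this lemma, having noted immediately beforehand that the $\Delta^0_1$-presentation of a class-sized predilator guarantees, over $\rca_0$, that the restriction $D\!\restriction\!\lo_0$ exists as a set, with the verification of the clauses of Definition~\ref{def:coded-predil} left as the routine restriction argument you spell out. Your write-up simply makes explicit the set-existence step and the clause-by-clause check that the paper treats as immediate.
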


Conversely, we will now show how a coded predilator can be extended into a class-sized one. Let us write $|a|$ for the cardinality of a finite order~$a$. As before, the order $\{0,\dots,|a|-1\}\in\lo_0$ will also be denoted by $|a|$.

\begin{definition}\label{def:trace}
The trace of a coded predilator~$D$ is given by
\begin{equation*}
\tr(D):=\{(n,\sigma)\,|\,\sigma\in D(n)\text{ and }\supp_n(\sigma)=n\}.
\end{equation*}
A class-sized predilator $D$ has trace $\tr(D):=\tr(D\!\restriction\!\lo_0)$.
\end{definition}

The equation $\supp_n(\sigma)=n$ in the definition of the trace is called the minimality condition: it states that the set $n=\{0,\dots,n-1\}$ is minimal in the sense that $\sigma$ depends on all its elements. The role of the minimality condition will become clear in the proof of Theorem~\ref{thm:class-coded-equiv}. Let us write $\en_a:|a|\to a$ for the increasing enumeration of a finite order~$a$. Given an embedding $f:a\to b$ between finite orders, let $|f|:|a|\to|b|$ be the unique morphism in~$\lo_0$ that satisfies
\begin{equation*}
\en_b\circ|f|=f\circ\en_a.
\end{equation*}
We also agree to write $\iota_X^Y:X\hookrightarrow Y$ for the inclusion map between sets~$X\subseteq Y$.

\begin{definition}\label{def:extend-coded-dil}
Consider a coded predilator~$D$. For each linear order~$X$ we set
\begin{gather*}
\overline D(X):=\{(a,\sigma)\,|\,a\in[X]^{<\omega}\text{ and }(|a|,\sigma)\in\tr(D)\},\\
(a,\sigma)<_{\overline D(X)}(b,\tau)\quad:\Leftrightarrow\quad D(|\iota_a^{a\cup b}|)(\sigma)<_{D(|a\cup b|)}D(|\iota_b^{a\cup b}|)(\tau),
\end{gather*}
where $a\cup b$ is ordered as a subset of~$X$. Given an embedding $f:X\to Y$, we define $\overline D(f):\overline D(X)\to\overline D(Y)$ by $\overline D(f)(a,\sigma):=([f]^{<\omega}(a),\sigma)$, relying on $|[f]^{<\omega}(a)|=|a|$. To define a family of functions $\overline\supp_X:\overline D(X)\to[X]^{<\omega}$, we set $\supp_X(a,\sigma):=a$.
\end{definition}

If $D$ is a coded predilator, then $D(|\iota_a^{a\cup b}|):D(|a|)\to D(|a\cup b|)$ is a total function. As a consequence, the relation $(a,\sigma)<_{\overline D(X)}(b,\tau)$ is $\Delta^0_1$-definable. It is even more straightforward to see that $(a,\sigma)\in\overline D(X)$, $\overline D(f)(a,\sigma)=(b,\tau)$ and $\overline\supp_X(a,\sigma)=b$ are $\Delta^0_1$-definable relations with parameter~$D\subseteq\mathbb N$, as needed for the following.

\begin{proposition}\label{prop:coded-to-class-predil}
If $D$ is a coded predilator, then~$\overline D$ is a class-sized predilator.
\end{proposition}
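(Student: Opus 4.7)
The plan is to verify, in order, each of the properties listed in Definition~\ref{def:dilator}: that $\overline D$ is a monotone functor $\lo\to\lo$, that $\overline\supp$ is a natural transformation to $[\cdot]^{<\omega}$, and that the support condition holds. The guiding principle throughout is that for any $(a,\sigma),(b,\tau)\in\overline D(X)$ with $a,b\in[X]^{<\omega}$, the comparison defined via $D(|a\cup b|)$ can be replaced by a comparison in $D(|c|)$ for any finite $c\subseteq X$ containing $a\cup b$; this ``independence of the common enlargement'' is the technical backbone and follows from the functoriality of $D$ on $\lo_0$ together with the fact that each $D(|\iota_a^c|)$ is an order embedding.

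First I would check that $<_{\overline D(X)}$ is a strict linear order. Totality and irreflexivity are immediate from the fact that $D(|a\cup b|)$ is linearly ordered and that $D(|\iota_a^a|)=\mathrm{id}$. For transitivity, given $(a,\sigma)<(b,\tau)<(c,\rho)$ I would set $d:=a\cup b\cup c$ and, using functoriality $D(|\iota_{a\cup b}^d|)\circ D(|\iota_a^{a\cup b}|)=D(|\iota_a^d|)$ (and the analogous equations), transfer both hypotheses into the single linear order $D(|d|)$; then restrict the resulting inequality back along the order embedding $D(|\iota_{a\cup c}^d|)$. Next I would verify that $\overline D(f):\overline D(X)\to\overline D(Y)$ is well defined (note $|[f]^{<\omega}(a)|=|a|$ because $f$ is injective, so the trace condition transfers) and is an order embedding: this amounts to observing that $|\iota_{[f]^{<\omega}(a)}^{[f]^{<\omega}(a\cup b)}|=|\iota_a^{a\cup b}|$ and similarly for $b$, so that the defining inequality for $<_{\overline D(Y)}$ on the images agrees literally with the one for $<_{\overline D(X)}$ on $(a,\sigma),(b,\tau)$. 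Functoriality $\overline D(g\circ f)=\overline D(g)\circ\overline D(f)$ is then visible from $[g\circ f]^{<\omega}=[g]^{<\omega}\circ[f]^{<\omega}$.

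The next item is monotonicity. Given $f\leq g:X\to Y$ and $(a,\sigma)\in\overline D(X)$, I want to compare $([f]^{<\omega}(a),\sigma)$ with $([g]^{<\omega}(a),\sigma)$ in $\overline D(Y)$. Let $c:=[f]^{<\omega}(a)\cup[g]^{<\omega}(a)$ and consider the two morphisms $e_f,e_g:|a|\to|c|$ in $\lo_0$ induced by $\iota_{[f]^{<\omega}(a)}^c$ and $\iota_{[g]^{<\omega}(a)}^c$. Unwinding the defining relation $\en_c\circ e_f=\iota\circ f\circ\en_a$ (and the analogue for $g$) together with strict monotonicity of $\en_c$ gives $e_f\leq e_g$ as morphisms in $\lo_0$; then the monotonicity of $D:\lo_0\to\lo$ built into Definition~\ref{def:coded-predil} yields $D(e_f)(\sigma)\leq D(e_g)(\sigma)$, which is exactly the required inequality in $\overline D(Y)$.

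It remains to check naturality of $\overline\supp$ and the support condition, both of which are essentially bookkeeping. Naturality reads $[f]^{<\omega}(\overline\supp_X(a,\sigma))=[f]^{<\omega}(a)=\overline\supp_Y(\overline D(f)(a,\sigma))$ and is immediate. For the support condition, suppose $(b,\tau)\in\overline D(Y)$ with $b=\overline\supp_Y(b,\tau)\subseteq\rng(f)$; since $f$ is an injective order embedding, $a:=f^{-1}(b)\in[X]^{<\omega}$ satisfies $[f]^{<\omega}(a)=b$ and $|a|=|b|$, so $(a,\tau)\in\overline D(X)$ and $\overline D(f)(a,\tau)=(b,\tau)$. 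The main obstacle, in my view, is organising the first step cleanly: nearly every subsequent verification reduces to the principle that one may freely enlarge the ``witness set'' $a\cup b$, and establishing this lemma once and for all (via the functoriality of $D$ and the equation $|\iota_a^c|=|\iota_{a'}^c|\circ|\iota_a^{a'}|$) removes what would otherwise be a proliferation of ad hoc commutative diagrams.
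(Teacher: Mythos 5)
Your proposal is correct. Note that the paper does not actually spell out a proof here: it delegates everything except monotonicity to \cite[Lemma~2.2]{freund-computable} and the monotonicity of $\overline D$ to \cite[Lemma~5.2]{frw-kruskal}, so your write-up is supplying exactly the verification that those references contain. Your organising lemma (independence of the comparison from the choice of common finite enlargement, via $|\iota_a^c|=|\iota_{a'}^c|\circ|\iota_a^{a'}|$ and the fact that each $D(|\iota_a^c|)$ is an embedding) is the standard backbone for transitivity, functoriality and well-definedness of $\overline D(f)$. The one step the paper singles out as needing separate treatment is monotonicity, and your argument for it is the right one: for $f\leq g$ the induced morphisms $e_f=|\iota_{[f]^{<\omega}(a)}^{c}|$ and $e_g=|\iota_{[g]^{<\omega}(a)}^{c}|$ in $\lo_0$ satisfy $e_f\leq e_g$ because $\en_c$ is strictly increasing, and then the monotonicity built into Definition~\ref{def:coded-predil} gives $D(e_f)(\sigma)\leq D(e_g)(\sigma)$, which is literally the inequality defining $\overline D(f)(a,\sigma)\leq_{\overline D(Y)}\overline D(g)(a,\sigma)$. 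No gaps.
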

\begin{proof}
Except for the claim that the functor $\overline D$ is monotone, this has been verified in~\cite[Lemma~2.2]{freund-computable}. We point out that the cited reference works in a weak set theory; one readily checks that the same proof goes through in~$\rca_0$. The monotonicity of~$\overline D$ is established in~\cite[Lemma~5.2]{frw-kruskal}.
\end{proof}

Starting with a class-sized predilator~$D$, we can first form the restriction~$D\!\restriction\!\lo_0$ and then the extension according to Definition~\ref{def:extend-coded-dil}. In the following, we show that we essentially recover $D$ in this way.

\begin{definition}
Consider a class-sized dilator~$D$. For each order~$X$, we define a function $\eta^D_X:\overline{D\!\restriction\!\lo_0}(X)\to D(X)$ by setting
\begin{equation*}
\eta^D_X(a,\sigma):=D(\iota_a^X\circ\en_a)(\sigma),
\end{equation*}
for the enumeration $\en_a:|a|\to a$ and the inclusion $\iota_a^X:a\hookrightarrow X$.
\end{definition}

By our standing assumption on class-sized predilators, the relation $D(f)(\sigma)=\tau$ has a $\Delta^0_1$-definition, in which $f$ occurs as a set variable. To obtain a $\Delta^0_1$-definition of~$\eta^D$, one replaces each occurrence of $(k,x)\in f$ by the $\Delta^0_1$-formula $\en_a(k)=x$ (note that the finite enumeration $\en_a:|a|\to a\subseteq X$ can be computed with~$X$ as an oracle, and that we have $\iota_a^X(x)=x$).

\begin{theorem}\label{thm:class-coded-equiv}
If $D$ is a class-sized predilator, then $\eta^D:\overline{D\!\restriction\!\lo_0}\To D$ is a natural isomorphism of functors.
\end{theorem}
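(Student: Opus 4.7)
The plan is to verify, for each linear order~$X$, that $\eta^D_X$ is a bijection which reflects and preserves the strict order, and then to check naturality in~$X$. The four sub-steps correspond to: surjectivity (this is where the support condition and the minimality clause $\supp_{|a|}(\sigma)=|a|$ built into $\tr(D)$ earn their keep), injectivity (essentially the same ingredients plus that morphisms in $\lo$ are injective), order preservation (a computation with the unique lifts $|f|$), and naturality in~$X$.

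For surjectivity, given $\tau\in D(X)$ I set $a:=\supp_X(\tau)\in[X]^{<\omega}$, so $\supp_X(\tau)\subseteq\rng(\iota_a^X\circ\en_a)=a$; the support condition from Definition~\ref{def:dilator}(ii) then yields $\sigma\in D(|a|)$ with $D(\iota_a^X\circ\en_a)(\sigma)=\tau$. Naturality of $\supp$ gives $[\iota_a^X\circ\en_a]^{<\omega}(\supp_{|a|}(\sigma))=a$, and since $\iota_a^X\circ\en_a$ restricts to a bijection $|a|\to a$ this forces $\supp_{|a|}(\sigma)=|a|$, i.e.~$(|a|,\sigma)\in\tr(D\!\restriction\!\lo_0)$, so $(a,\sigma)\in\overline{D\!\restriction\!\lo_0}(X)$ witnesses $\eta^D_X(a,\sigma)=\tau$. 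For injectivity, if $\eta^D_X(a,\sigma)=\eta^D_X(b,\tau)=\rho$, the same naturality computation yields $\supp_X(\rho)=a=b$, after which $\sigma=\tau$ follows because $D(\iota_a^X\circ\en_a)$ is an order embedding, hence injective.

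For strict order preservation, I use the identity
\begin{equation*}
\iota_{a\cup b}^X\circ\en_{a\cup b}\circ|\iota_a^{a\cup b}|=\iota_a^X\circ\en_a,
\end{equation*}
which follows from the defining equation $\en_{a\cup b}\circ|\iota_a^{a\cup b}|=\iota_a^{a\cup b}\circ\en_a$ of Definition~\ref{def:trace}'s preamble, together with the analogous identity for~$b$. Functoriality of~$D$ then equates $D(\iota_{a\cup b}^X\circ\en_{a\cup b})\circ D(|\iota_a^{a\cup b}|)(\sigma)$ with $\eta^D_X(a,\sigma)$ and similarly for~$\tau$; since the common prefactor $D(\iota_{a\cup b}^X\circ\en_{a\cup b})$ is an order embedding, it both preserves and reflects the strict order, so the defining inequality of $<_{\overline D(X)}$ is equivalent to $\eta^D_X(a,\sigma)<_{D(X)}\eta^D_X(b,\tau)$. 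Naturality in~$X$ is the computation that, for $f:X\to Y$, both $\eta^D_Y(\overline{D\!\restriction\!\lo_0}(f)(a,\sigma))$ and $D(f)(\eta^D_X(a,\sigma))$ equal $D(f\circ\iota_a^X\circ\en_a)(\sigma)$; here one uses that $f\circ\iota_a^X\circ\en_a$ is the unique strictly increasing map $|a|\to Y$ with image $[f]^{<\omega}(a)$, hence coincides with $\iota_{[f]^{<\omega}(a)}^Y\circ\en_{[f]^{<\omega}(a)}$.

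I expect the main conceptual (though not technical) obstacle to be the surjectivity step, because it is the only place where the minimality clause in the definition of~$\tr(D)$ and the support condition of Definition~\ref{def:dilator}(ii) are jointly essential; the other steps are a bookkeeping with the lifts $|f|$ and the enumerations $\en_a$. One small formal point worth mentioning is that the whole statement must be interpreted in~$\rca_0$, so I would also remark that the bijectivity and monotonicity of $\eta^D_X$ have $\Delta^0_1$-witnessing just like $\eta^D$ itself (as already noted before the theorem).
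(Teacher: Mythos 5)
Your proposal is correct and follows essentially the same route as the paper: the surjectivity and injectivity steps are exactly the paper's existence and uniqueness of normal forms $\sigma\nf D(\iota_a^X\circ\en_a)(\sigma_0)$ (support condition for existence, minimality condition to recover $a=\supp_X(\sigma)$ for uniqueness), and the order-preservation and naturality computations via the identities $\iota_{a\cup b}^X\circ\en_{a\cup b}\circ|\iota_a^{a\cup b}|=\iota_a^X\circ\en_a$ and $f\circ\iota_a^X\circ\en_a=\iota_{[f]^{<\omega}(a)}^Y\circ\en_{[f]^{<\omega}(a)}$ are the same ones the paper uses elsewhere (and delegates here to the cited reference). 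No gaps.
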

\begin{proof}
This result is proved in~\cite[Proposition~2.1]{freund-computable}. For later reference we recall the proof that each component $\eta_X$ is bijective. For $\sigma\in D(X)$ we write
\begin{equation*}
\sigma\nf D(\iota_a^X\circ\en_a)(\sigma_0)
\end{equation*}
with $a\in[X]^{<\omega}$ if the equation holds and we have $(|a|,\sigma_0)\in\tr(D)$. We need to show that each element of~$D(X)$ has a unique such normal form. To establish uniqueness, we observe that the minimality condition $\supp_{|a|}(\sigma_0)=|a|$ from the definition of the trace allows to recover
\begin{equation*}
a=[\iota_a^X\circ\en_a]^{<\omega}(\supp_{|a|}(\sigma_0))=\supp_X(D(\iota_a^X\circ\en_a)(\sigma_0)).
\end{equation*}
Once~$a$ is determined, uniqueness follows from the fact that $D(\iota_a^X\circ\en_a)$ is an embedding and hence injective. To establish existence, we set $a:=\supp_X(\sigma)$. Due to $\rng(\iota_a^X\circ\en_a)=a$, the support condition from part~(ii) of Definition~\ref{def:dilator} yields $\sigma=D(\iota_a^X\circ\en_a)(\sigma_0)$ for some $\sigma_0\in D(|a|)$. To show that we have a normal form, we need to establish~$(|a|,\sigma)\in\tr(D)$. In view of
\begin{equation*}
a=\supp_X(\sigma)=\supp_X(D(\iota_a^X\circ\en_a)(\sigma_0))=[\iota_a^X\circ\en_a]^{<\omega}(\supp_{|a|}(\sigma_0)),
\end{equation*}
the minimality condition $\supp_{|a|}(\sigma_0)=|a|$ must indeed hold.
\end{proof}

Let us also observe that the natural transformation $\eta^D$ respects the supports that come with $D$ and $\overline{D\!\restriction\!\lo_0}$. For $(a,\sigma)\in\overline D(X)$, the minimality condition entails
\begin{multline*}
\supp_X\circ\eta^D_X(a,\sigma)=\supp_X(D(\iota_a^X\circ\en_a)(\sigma))=[\iota_a^X\circ\en_a]^{<\omega}(\supp_{|a|}(\sigma))=\\
=[\iota_a^X\circ\en_a]^{<\omega}(|a|)=a=\overline\supp_X(a,\sigma).
\end{multline*}
This explicit verification will be superseded by the general result in Lemma~\ref{lem:transfos-respect-supp}. Theorem~\ref{thm:class-coded-equiv} shows that Definition~\ref{def:extend-coded-dil} yields a universal $\Delta^0_1$-definable family of class-sized predilators, in which coded predilators serve as set parameters. As promised, the previous considerations also show that the restriction to $\Delta^0_1$-definitions is inessential to a certain extent: It was only needed to ensure that the set $D\!\restriction\!\lo_0$ and the components $\eta_X:\overline{D\!\restriction\!\lo_0}(X)\to D(X)$ can be formed in~$\rca_0$. In a sufficiently strong base theory, the same argument shows that a class-sized predilator~$D$ of arbitrary complexity is equivalent to the $\Delta^0_1$-definable predilator $\overline{D\!\restriction\!\lo_0}$. Let us now come back to questions of well foundedness. We recall that $\rca_0$ proves the equivalence between the two obvious definitions (in terms of minimal elements and descending sequences; see e.\,g.~\cite[Lemma~2.3.12]{freund-thesis}).

\begin{definition}\label{def:coded-dilator}
Consider a coded predilator~$D$. If $\overline D(X)$ is well founded for every well order~$X$, then we say that $D$ is a coded dilator.
\end{definition}

From the viewpoint of second order arithmetic, the previous definition does only quantify over countable well orders. In a sufficiently strong setting, we can see that this does not make a difference: since each element $(a,\sigma)\in\overline D(X)$ has finite support~$a\in[X]^{<\omega}$, a descending sequence in~$\overline D(X)$ yields a descending sequence in~$\overline D(X_0)$ for some countable suborder~$X_0\subseteq X$. We can now complement Lemma~\ref{lem:class-to-coded-predil} and Proposition~\ref{prop:coded-to-class-predil} as follows:

\begin{corollary}\label{cor:class-coded-dil}
If $D$ is a class-sized dilator, then $D\!\restriction\!\lo_0$ is a coded dilator. If $D$ is a coded dilator, then $\overline D$ is a class-sized dilator.
\end{corollary}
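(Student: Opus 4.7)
The plan is that this corollary should be a direct bookkeeping consequence of the results already proved, with no substantial new work required. The second claim is immediate: given a coded dilator $D$, Proposition~\ref{prop:coded-to-class-predil} already supplies that $\overline D$ is a class-sized predilator, and the additional requirement for being a class-sized dilator, namely that $\overline D(X)$ is well founded for every well order $X$, is literally the defining condition of $D$ being a coded dilator (Definition~\ref{def:coded-dilator}). So nothing further needs to be said in that direction.

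For the first claim, I would start from Lemma~\ref{lem:class-to-coded-predil}, which already gives that $D\!\restriction\!\lo_0$ is a coded predilator, so only the well-foundedness clause from Definition~\ref{def:coded-dilator} remains to check. The key tool is Theorem~\ref{thm:class-coded-equiv}: for each well order $X$, the component $\eta^D_X\colon \overline{D\!\restriction\!\lo_0}(X)\to D(X)$ is an order isomorphism. Since $D$ is a class-sized dilator, $D(X)$ is well founded, and well-foundedness transfers along order isomorphisms in $\rca_0$. Concretely, any infinite descending sequence in $\overline{D\!\restriction\!\lo_0}(X)$ would be carried by $\eta^D_X$ to an infinite descending sequence in $D(X)$, contradicting the assumption on~$D$; this uses the equivalence between the minimal-element and descending-sequence formulations of well-foundedness that $\rca_0$ proves and which the excerpt explicitly invokes before Definition~\ref{def:coded-dilator}. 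There is no real obstacle here, just the observation that the natural isomorphism of Theorem~\ref{thm:class-coded-equiv} allows well-foundedness to be shuttled between $D$ and~$\overline{D\!\restriction\!\lo_0}$ componentwise.
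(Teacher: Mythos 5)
Your proposal is correct and follows the paper's own proof exactly: the first claim is handled by transferring well-foundedness across the isomorphism $\eta^D_X$ from Theorem~\ref{thm:class-coded-equiv}, and the second is immediate from the definitions. No discrepancies to report.
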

\begin{proof}
To establish the first claim, we assume that $D$ is a class-sized dilator. In view of Theorem~\ref{thm:class-coded-equiv} it follows that $\overline{D\!\restriction\!\lo_0}(X)\cong D(X)$ is well founded for each well order~$X$, as needed to conclude that $D\!\restriction\!\lo_0$ is a coded dilator. The second claim is an immediate consequence of the definitions.
\end{proof}

In view of the close connection that we have established, we will omit the specifications ``class-sized" and ``coded" when the context allows it. Many constructions and results apply---mutatis mutandis---to both class-sized and coded predilators.

To turn the collection of predilators into a category, we declare that the morphisms between two predilators $D=(D,\supp^D)$ and $E=(E,\supp^E)$ are the natural transformations $\mu:D\To E$ of functors. Note that the components of such a transformation are morphisms in~$\lo$, i.\,e., order embeddings. In the coded case, we assume that $\mu$ is given as the set $\{(n,\sigma,\tau)\,|\,\mu_n(\sigma)=\tau\}\subseteq\mathbb N$; in the class-sized case, we require that the relation $\mu_X(\sigma)=\tau$ is $\Delta^0_1$-definable. The obvious restriction~$\mu\!\restriction\!\lo_0$ will then exist as a set, and we have the following.

\begin{lemma}\label{def:morphism-dilators}
Assume that $\mu:D\To E$ is a morphism of class-sized dilators. Then the restriction $\mu\!\restriction\!\lo_0:D\!\restriction\!\lo_0\To E\!\restriction\!\lo_0$ is a morphism of coded dilators.
\end{lemma}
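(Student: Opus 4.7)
The plan is to observe that the lemma is essentially a bookkeeping check: each condition that $\mu\!\restriction\!\lo_0$ must satisfy to be a morphism of coded dilators follows from the corresponding condition for $\mu$ by restricting to the subcategory $\lo_0\subseteq\lo$. The substantive point, which needs the hypotheses on definability, is the existence of $\mu\!\restriction\!\lo_0$ as an honest set in $\rca_0$.

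First, I would verify set existence. By the standing convention on class-sized morphisms, the relation $\mu_X(\sigma)=\tau$ admits a $\Delta^0_1$-definition in which $X$ (and $f$, where relevant) occurs as a set variable. For the finite orders $n=\{0,\dots,n-1\}\in\lo_0$, the parameter $X=n$ can itself be computed from the natural number~$n$, so the relation
\begin{equation*}
R(n,\sigma,\tau)\ :\Leftrightarrow\ \mu_n(\sigma)=\tau
\end{equation*}
is $\Delta^0_1$-definable without a set parameter beyond the code of $\mu$ itself. Thus $\Delta^0_1$-comprehension yields the set
\begin{equation*}
\mu\!\restriction\!\lo_0:=\{(n,\sigma,\tau)\,|\,\mu_n(\sigma)=\tau\}\subseteq\mathbb N,
\end{equation*}
which is the form in which morphisms of coded (pre)dilators are to be presented.

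Second, I would check that $\mu\!\restriction\!\lo_0$ really is a natural transformation $D\!\restriction\!\lo_0\To E\!\restriction\!\lo_0$ of coded predilators. By Lemma~\ref{lem:class-to-coded-predil}, both $D\!\restriction\!\lo_0$ and $E\!\restriction\!\lo_0$ are coded predilators, and by Corollary~\ref{cor:class-coded-dil} they are in fact coded dilators. For each $n\in\lo_0$, the component $(\mu\!\restriction\!\lo_0)_n=\mu_n\colon D(n)\to E(n)$ is an order embedding because $\mu_n$ is a morphism in~$\lo$. For each morphism $f\colon m\to n$ in $\lo_0$, the naturality square
\begin{equation*}
\mu_n\circ D(f)=E(f)\circ\mu_m
\end{equation*}
holds as an instance of the naturality of $\mu$ with respect to all morphisms in~$\lo$. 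This is exactly what is required for $\mu\!\restriction\!\lo_0$ to be a morphism of coded predilators, and hence of coded dilators.

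There is no real obstacle in this proof; the only mild subtlety is to notice that one must invoke $\Delta^0_1$-comprehension to form the set $\mu\!\restriction\!\lo_0$, which is why the paper's standing assumption that the graph of $\mu_X(\sigma)=\tau$ be $\Delta^0_1$-definable is indispensable. Everything else is a verbatim restriction of the corresponding class-sized data to the small subcategory~$\lo_0$.
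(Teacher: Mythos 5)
Your proof is correct and matches the paper's intent: the paper in fact gives no explicit proof of this lemma, remarking only (in the preceding discussion) that the $\Delta^0_1$-definability of $\mu_X(\sigma)=\tau$ guarantees that $\mu\!\restriction\!\lo_0$ exists as a set, and treating the remaining naturality checks as immediate restrictions of the class-sized data. Your write-up simply makes explicit the $\Delta^0_1$-comprehension step and the verbatim restriction of the naturality squares, which is exactly the intended argument.
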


In order to prove a converse, we will need the following fact, which applies in the coded as well as in the class-sized case. The result is due to Girard~\cite[Proposition~2.3.15]{girard-pi2}; a proof that uses our terminology can be found in~\cite[Lemma~2.19]{freund-rathjen_derivatives}.

\begin{lemma}\label{lem:transfos-respect-supp}
We have $\supp^E\circ\mu=\supp^D$ for any morphism $\mu:D\To E$ between predilators $D=(D,\supp^D)$ and $E=(E,\supp^E)$.
\end{lemma}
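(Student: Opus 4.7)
The plan is to prove the equality $\supp^E_X\circ\mu_X=\supp^D_X$ by establishing the two inclusions separately. One direction is an immediate consequence of naturality, while the other requires a small ``perturbation'' construction that exploits the injectivity of $\mu_Y$ together with the support condition in~$E$.

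First I would prove $\supp^E_X(\mu_X(\sigma))\subseteq\supp^D_X(\sigma)$. Setting $a:=\supp^D_X(\sigma)$, the support condition for~$D$ yields $\sigma_0\in D(a)$ with $\sigma=D(\iota_a^X)(\sigma_0)$, so naturality of $\mu$ gives
\begin{equation*}
\mu_X(\sigma)=E(\iota_a^X)(\mu_a(\sigma_0))\in\rng(E(\iota_a^X)).
\end{equation*}
The automatic converse of the support condition (applied to~$E$) then forces $\supp^E_X(\mu_X(\sigma))\subseteq a$.

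For the reverse inclusion $\supp^D_X(\sigma)\subseteq\supp^E_X(\mu_X(\sigma))=:b$, I would argue by contradiction: assume there is some $x\in\supp^D_X(\sigma)\setminus b$. The idea is to build a linear order $Y=X\cup\{*\}$ together with two distinct embeddings $f,g:X\to Y$ that coincide on $X\setminus\{x\}$ (hence on $b$) but satisfy $f(x)\neq g(x)$. Concretely, place the new point~$*$ strictly between $x$ and the ``next'' elements of~$X$ above~$x$, take $f$ to be the inclusion, and let $g$ agree with $f$ away from $x$ while sending $x$ to~$*$. Naturality of $\supp^D$ together with $x\in\supp^D_X(\sigma)$ shows that $f(x)=x$ lies in $\supp^D_Y(D(f)(\sigma))$ while $*=g(x)$ lies in $\supp^D_Y(D(g)(\sigma))$ (and $*\notin\rng(f)$), so $D(f)(\sigma)\neq D(g)(\sigma)$. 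Since each component of $\mu$ is an embedding and hence injective, naturality gives
\begin{equation*}
E(f)(\mu_X(\sigma))=\mu_Y(D(f)(\sigma))\neq\mu_Y(D(g)(\sigma))=E(g)(\mu_X(\sigma)).
\end{equation*}
On the other hand, the support condition applied to $E$ and the inclusion $\iota_b^X$ lets me write $\mu_X(\sigma)=E(\iota_b^X)(\tau)$ for some $\tau\in E(b)$; since $f\circ\iota_b^X=g\circ\iota_b^X$ (as $f$ and $g$ agree on~$b$), functoriality yields $E(f)(\mu_X(\sigma))=E(g)(\mu_X(\sigma))$, a contradiction.

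The main obstacle I anticipate is the perturbation step: one must check that the two maps $f,g$ constructed above are both strictly increasing, which requires a careful placement of the fresh element~$*$ (case distinction according to whether $x$ has an immediate successor in~$X$, or whether one places~$*$ in a cut just above~$x$). Everything else is a straightforward manipulation of naturality squares and of the two directions of the support condition, the automatic converse being the one recorded in the paragraph after Definition~\ref{def:dilator}.
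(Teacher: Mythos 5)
Your argument is correct. Note that the paper itself does not prove this lemma but refers to Girard and to \cite{freund-rathjen_derivatives}; the proof you give is essentially the standard one for this fact, so there is nothing to compare beyond confirming that it works. The first inclusion is exactly as you say: factor $\sigma$ through its support via the support condition for~$D$, push through $\mu$ by naturality, and apply the ``automatic converse'' for~$E$. The perturbation step for the reverse inclusion is also sound, and the obstacle you anticipate is not actually there: you can always insert the fresh point~$*$ into the Dedekind cut directly above~$x$ (declare $y<_Y *$ iff $y\leq_X x$ and $*<_Y y$ iff $x<_X y$), which makes both $f$ and $g$ order embeddings with no case distinction on whether $x$ has an immediate successor. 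Two cosmetic points you should address in a written-up version: (1) since linear orders are coded with underlying sets in~$\mathbb N$, the set $X$ may already exhaust~$\mathbb N$, so one should either pass to an isomorphic re-coding of~$X$ (harmless, as supports transform along isomorphisms by naturality) or, in the coded case, simply work with the two embeddings $n\to n+1$ that agree except at the position of~$x$; (2) in the coded case $f$ is not literally an inclusion, but the argument only uses that $g(x)\notin\rng(f)$ and that $f,g$ agree on $X\setminus\{x\}\supseteq b$, both of which still hold.
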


The lemma ensures that $(n,\sigma)\in\tr(D)$ implies $(n,\mu_n(\sigma))\in\tr(E)$, which is needed in order to justify the following construction.

\begin{definition}\label{def:morphs-dils-extend}
Consider a morphism $\mu:D\To E$ of coded predilators. For each order~$X$ we define $\overline\mu_X:\overline D(X)\to\overline E(X)$ by setting $\overline\mu_X(a,\sigma)=(a,\mu_{|a|}(\sigma))$.
\end{definition}

The following has been verified in~\cite[Lemma~2.21]{freund-rathjen_derivatives}.

\begin{lemma}\label{lem:morphism-dils-extend}
If $\mu:D\To E$ is a morphism of coded predilators, then $\overline\mu:\overline D\To\overline E$ is a morphism of class-sized predilators.
\end{lemma}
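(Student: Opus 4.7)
My plan is to verify three things: that each component $\overline\mu_X$ really maps into $\overline E(X)$ and is an order embedding, that the family $\{\overline\mu_X\}_X$ is natural in $X$, and finally that the whole thing is $\Delta^0_1$-definable with the appropriate parameters (so that $\overline\mu$ qualifies as a morphism of class-sized predilators in the sense used in the paper).

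For the first point, fix $(a,\sigma)\in\overline D(X)$, so $(|a|,\sigma)\in\tr(D)$ and in particular $\supp^D_{|a|}(\sigma)=|a|$. Lemma~\ref{lem:transfos-respect-supp} gives $\supp^E_{|a|}(\mu_{|a|}(\sigma))=\supp^D_{|a|}(\sigma)=|a|$, hence $(|a|,\mu_{|a|}(\sigma))\in\tr(E)$ and thus $\overline\mu_X(a,\sigma)=(a,\mu_{|a|}(\sigma))\in\overline E(X)$. To check order preservation, unfold the definition of $<_{\overline D(X)}$: we have $(a,\sigma)<_{\overline D(X)}(b,\tau)$ iff $D(|\iota_a^{a\cup b}|)(\sigma)<_{D(|a\cup b|)}D(|\iota_b^{a\cup b}|)(\tau)$. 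Applying naturality of $\mu$ twice rewrites this as $\mu_{|a\cup b|}(D(|\iota_a^{a\cup b}|)(\sigma))<\mu_{|a\cup b|}(D(|\iota_b^{a\cup b}|)(\tau))$ in $E(|a\cup b|)$, which by another application of naturality is exactly $E(|\iota_a^{a\cup b}|)(\mu_{|a|}(\sigma))<E(|\iota_b^{a\cup b}|)(\mu_{|b|}(\tau))$, i.e.\ $\overline\mu_X(a,\sigma)<_{\overline E(X)}\overline\mu_X(b,\tau)$. Since $\mu_{|a\cup b|}$ is an order embedding, the equivalence goes in both directions.

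For naturality, let $f:X\to Y$. Both $\overline E(f)\circ\overline\mu_X$ and $\overline\mu_Y\circ\overline D(f)$ send $(a,\sigma)$ to $([f]^{<\omega}(a),\mu_{|a|}(\sigma))$; here the key small observation is that $|[f]^{<\omega}(a)|=|a|$, which is what lets us identify $\mu_{|[f]^{<\omega}(a)|}(\sigma)$ with $\mu_{|a|}(\sigma)$ on the right-hand composite. Finally, $\Delta^0_1$-definability is immediate from the formula $\overline\mu_X(a,\sigma)=(a,\mu_{|a|}(\sigma))$, since the coded morphism $\mu$ already provides a $\Delta^0_1$-definition of $\mu_n(\sigma)=\tau$, and passing from $a\in[X]^{<\omega}$ to $|a|$ is computable.

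I do not expect any genuine obstacle here; the statement is essentially a bookkeeping exercise packaging the fact that $\mu$ commutes with supports (Lemma~\ref{lem:transfos-respect-supp}) together with naturality of $\mu$. The only place where one has to be even mildly careful is in the order-preservation step, where both directions of the equivalence are needed and rely on $\mu_{|a\cup b|}$ being an embedding rather than just monotone; but this is automatic since morphisms of predilators are pointwise order embeddings by definition.
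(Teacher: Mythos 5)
Your verification is correct and is exactly the routine check that the paper delegates to the cited reference (\cite[Lemma~2.21]{freund-rathjen_derivatives}): well-definedness via Lemma~\ref{lem:transfos-respect-supp}, order preservation via naturality of $\mu$ plus the fact that $\mu_{|a\cup b|}$ is an embedding, and naturality in $X$ via $|[f]^{<\omega}(a)|=|a|$. One tiny remark: for $\overline\mu_X$ to be an embedding of linear orders the forward implication alone already suffices (it forces the converse by linearity), so your caution about needing both directions is harmless but not essential.
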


It is straightforward to see that $(\cdot)\!\restriction\!\lo_0$ is a functor from the category of class-sized predilators to the category of coded predilators, and that $\overline{(\cdot)}$ is a functor in the converse direction. Together with Theorem~\ref{thm:class-coded-equiv}, the following shows that $\eta$ is a natural isomorphism between the composition $\overline{(\cdot)\!\restriction\!\lo_0}$ and the identity on the category of class-sized predilators.

\begin{proposition}\label{prop:eta-natural-in-D}
We have $\eta^E\circ\overline{\mu\!\restriction\!\lo_0}=\mu\circ\eta^D$ whenever $\mu:D\To E$ is a morphism of class-sized predilators.
\end{proposition}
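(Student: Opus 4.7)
The plan is to verify the equation pointwise by unfolding the definitions and applying naturality of $\mu$ on a single morphism in $\lo$. Fix a linear order $X$ and an element $(a,\sigma)\in\overline{D\!\restriction\!\lo_0}(X)$, so $a\in[X]^{<\omega}$ and $(|a|,\sigma)\in\tr(D)$. By Definition~\ref{def:morphs-dils-extend}, $\overline{\mu\!\restriction\!\lo_0}_X(a,\sigma)=(a,\mu_{|a|}(\sigma))$, and by Lemma~\ref{lem:transfos-respect-supp} this pair indeed lies in $\overline{E\!\restriction\!\lo_0}(X)$. Applying the definition of $\eta^E$ we get
\begin{equation*}
\eta^E_X\circ\overline{\mu\!\restriction\!\lo_0}_X(a,\sigma)=E(\iota_a^X\circ\en_a)(\mu_{|a|}(\sigma)),
\end{equation*}
while the definition of $\eta^D$ gives
\begin{equation*}
\mu_X\circ\eta^D_X(a,\sigma)=\mu_X\bigl(D(\iota_a^X\circ\en_a)(\sigma)\bigr).
\end{equation*}

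The second step is to invoke naturality of $\mu:D\To E$ at the single morphism $\iota_a^X\circ\en_a:|a|\to X$ of $\lo$, which yields the square identity $\mu_X\circ D(\iota_a^X\circ\en_a)=E(\iota_a^X\circ\en_a)\circ\mu_{|a|}$. Evaluated at $\sigma\in D(|a|)$ this exactly matches the two expressions above, so they are equal. Since $(a,\sigma)$ was arbitrary, the two natural transformations agree componentwise and hence as transformations of functors.

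There is no real obstacle here; the whole content of the statement is that $\eta^D$ was defined in terms of the functorial action of $D$ on the single morphism $\iota_a^X\circ\en_a$, and naturality of $\mu$ is stated for every such morphism. One minor bookkeeping point worth flagging in the write-up is that, to make sense of the left-hand side, we must know that $(a,\mu_{|a|}(\sigma))$ satisfies the minimality condition defining $\tr(E)$, but this is immediate from Lemma~\ref{lem:transfos-respect-supp} applied to $\mu\!\restriction\!\lo_0$: we have $\supp^E_{|a|}(\mu_{|a|}(\sigma))=\supp^D_{|a|}(\sigma)=|a|$.
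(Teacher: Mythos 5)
Your proof is correct and is essentially identical to the paper's: both unfold the definitions of $\overline{\mu\!\restriction\!\lo_0}$, $\eta^D$ and $\eta^E$ at an arbitrary $(a,\sigma)$ and conclude by naturality of $\mu$ at the single embedding $\iota_a^X\circ\en_a$. Your extra remark that Lemma~\ref{lem:transfos-respect-supp} guarantees $(|a|,\mu_{|a|}(\sigma))\in\tr(E)$ is exactly the justification the paper gives (just before Definition~\ref{def:morphs-dils-extend}) for the well-definedness of $\overline{\mu\!\restriction\!\lo_0}$.
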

\begin{proof}
Using the naturality of $\mu$, we get
\begin{multline*}
(\eta^E\circ\overline{\mu\!\restriction\!\lo_0})_X(a,\sigma)=\eta^E_X(a,\mu_{|a|}(\sigma))=E(\iota_a^X\circ\en_a)\circ\mu_{|a|}(\sigma)=\\
=\mu_X\circ D(\iota_a^X\circ\en_a)(\sigma)=(\mu\circ\eta^D)_X(\sigma)
\end{multline*}
for each order~$X$ and each element $(a,\sigma)\in\overline{D\!\restriction\!\lo_0}(X)$.
\end{proof}

One can also start with a coded predilator~$D$, form the class-sized extension~$\overline D$, and then revert to the coded restriction $\overline D\!\restriction\!\lo_0$. By mapping $(a,\sigma)\in\overline D(n)$ to the element $D(\iota_a^X\circ\en_a)(\sigma)\in D(n)$, we get a natural isomorphism~$\overline D\!\restriction\!\lo_0\cong D$, as verified in~\cite[Lemma~2.6]{freund-rathjen_derivatives}. Analogous to the proof of Proposition~\ref{prop:eta-natural-in-D}, one can show that the construction is natural in~$D$. Together, these considerations show that the category of class-sized predilators is equivalent to the category of coded predilators (in the sense of~\cite[Section~IV.4]{maclane-working}).

In the following, we show that the trace of a predilator plays an analogous role to the underlying set of a linear order. This will yield characterizations of direct limits and pullbacks in the category of predilators. As in the first part of this section, the results are due to Girard~\cite{girard-pi2}, but our formalism is quite different. Let us first specify what we mean by the range of a morphism of predilators. The following construction is justified by Lemma~\ref{lem:transfos-respect-supp}.

\begin{definition}\label{def:trace-morphism}
Given a morphism $\mu:D\To E$ of predilators, we define an injective function $\tr(\mu):\tr(D)\to\tr(E)$ by setting
\begin{equation*}
\tr(\mu)(n,\sigma)=(n,\mu_n(\sigma)).
\end{equation*}
The range of~$\mu$ is defined as the set $\rng(\mu):=\rng(\tr(\mu))\subseteq\tr(E)$.
\end{definition}

Girard~\cite[Theorem~4.2.5]{girard-pi2} has shown that any subset $A\subseteq\tr(D)$ gives rise to a predilator~$D[A]$ and a morphism $\iota[A]:D[A]\To D$ with $\rng(\iota[A])=A$. In the following we recover this result in our terminology. As preparation, we consider an element $\sigma\nf D(\iota_a^X\circ\en_a)(\sigma_0)\in D(X)$ and an order embedding $f:X\to Y$. For $b:=[f]^{<\omega}(a)\in[Y]^{<\omega}$ we have $|b|=|a|$ and $f\circ\iota_a^X\en_a=\iota_b^Y\circ\en_b$, as both functions enumerate the finite order~$b$. Hence $D(f)(\sigma)\nf D(\iota_b^Y\circ\en_b)(\sigma_0)\in D(Y)$ depends on the same trace element $(|b|,\sigma_0)=(|a|,\sigma_0)\in\tr(D)$. In the context of the following construction, this justifies the definition of $D[A](f)$.

\begin{definition}\label{def:D[A]}
Consider a predilator~$D=(D,\supp^D)$ and a set $A\subseteq\tr(D)$. For each order~$X$  (with $X\in\lo$ in the class-sized and $X\in\lo_0$ in the coded case) we define a suborder $D[A](X)\subseteq D(X)$ by stipulating
\begin{equation*}
\sigma\in D[A](X)\quad:\Leftrightarrow\quad (|a|,\sigma_0)\in A\text{ for }\sigma\nf D(\iota_a^X\circ\en_a)(\sigma_0).
\end{equation*}
For a morphism $f:X\to Y$, let $D[A](f):D[A](X)\to D[A](Y)$ be the restriction of the embedding~$D(f):D(X)\to D(Y)$. We also define $\supp^{D[A]}_X:D[A](X)\to[X]^{<\omega}$ as the restriction of the support function $\supp^D_X:D(X)\to[X]^{<\omega}$.
\end{definition}

Note that the relation $\sigma\in D[A](X)$ is $\Delta^0_1$-definable with set parameter~$A$, by the proof of Theorem~\ref{thm:class-coded-equiv} and the discussion that precedes it.

\begin{lemma}\label{lem:D[A]-predil}
If $D$ is a (pre-)dilator, then so is $D[A]$, for any $A\subseteq\tr(D)$.
\end{lemma}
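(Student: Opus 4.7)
The plan is to check the three defining clauses of a predilator for $D[A]$ (functoriality, monotonicity, and the support condition), keeping in mind throughout that $D[A](X)$, $D[A](f)$, and $\supp^{D[A]}$ are simply restrictions of the corresponding data for $D$; well foundedness in the dilator case is then automatic, since a suborder of a well founded order is well founded.

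First I would verify that $D(f)$ really does restrict to a map $D[A](X)\to D[A](Y)$. The paragraph preceding Definition~\ref{def:D[A]} already records the key computation: if $\sigma\nf D(\iota_a^X\circ\en_a)(\sigma_0)$ and $b:=[f]^{<\omega}(a)$, then $D(f)(\sigma)\nf D(\iota_b^Y\circ\en_b)(\sigma_0)$, and crucially $|b|=|a|$, so the \emph{same} trace element $(|a|,\sigma_0)\in\tr(D)$ decides membership in $D[A]$ on both sides of $D(f)$. Functoriality of $D[A]$ is then immediate from that of $D$, monotonicity follows because $D[A](f)\leq D[A](g)$ reduces pointwise to $D(f)\leq D(g)$, and naturality of $\supp^{D[A]}$ as a transformation to $[\cdot]^{<\omega}$ is inherited from naturality of $\supp^D$.

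The main content is the support condition. Given $\sigma\in D[A](Y)$ with $\supp^{D[A]}_Y(\sigma)=\supp^D_Y(\sigma)\subseteq\rng(f)$, the support condition for $D$ produces $\sigma_0\in D(X)$ with $D(f)(\sigma_0)=\sigma$; I must show $\sigma_0\in D[A](X)$. Writing $\sigma_0\nf D(\iota_a^X\circ\en_a)(\tau)$ and $b:=[f]^{<\omega}(a)$, the observation above gives $\sigma\nf D(\iota_b^Y\circ\en_b)(\tau)$. Uniqueness of the normal form in $D(Y)$, combined with the hypothesis $\sigma\in D[A](Y)$, forces $(|b|,\tau)\in A$; and $|b|=|a|$ then yields $(|a|,\tau)\in A$, i.e.\ $\sigma_0\in D[A](X)$, as required. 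The only subtlety throughout is the repeated use of $|[f]^{<\omega}(a)|=|a|$ to identify trace elements across embeddings, and this is already packaged in the normal form observation preceding the definition, so no real obstacle arises.
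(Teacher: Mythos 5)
Your proof is correct and follows essentially the same route as the paper: the paper also declares everything except the support condition immediate, and verifies that condition by exactly your argument (pull back via the support condition for $D$, compare normal forms under $f$, and use $|[f]^{<\omega}(a)|=|a|$ to transfer membership in $A$). Your additional spelling-out of functoriality, monotonicity, and well foundedness of suborders is fine but adds nothing beyond what the paper treats as immediate.
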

\begin{proof}
We only verify the support condition from part~(ii) of Definition~\ref{def:dilator}, since all other properties are immediate. Consider an embedding $f:X\to Y$ and an element $\tau\in D[A](Y)$ with $\rng(f)\supseteq\supp^{D[A]}_Y(\tau)=\supp^D_Y(\tau)$. The support condition for~$D$ yields $\tau=D(f)(\sigma)$ for some $\sigma\in D(X)$. Write $\sigma\nf D(\iota_a^X\circ\en_a)(\sigma_0)$, and argue as above to get $\tau\nf D(\iota_b^Y\circ\en_b)(\sigma_0)$ with $b=[f]^{<\omega}$. In view of $\tau\in D[A](Y)$ we can conclude $(|a|,\sigma_0)=(|b|,\sigma_0)\in A$ and then $\sigma\in D[A](X)$. It follows that we have $\tau=D(f)(\sigma)=D[A](f)(\sigma)\in\rng(D[A](f))$, as needed.
\end{proof}

One readily checks that the following yields a morphism of predilators.

\begin{definition}\label{def:iota[A]}
Consider a predilator~$D$ and a set $A\subseteq\tr(D)$. To define a morphism $\iota[A]:D[A]\To D$, we declare that each component \mbox{$\iota[A]_X:D[A](X)\hookrightarrow D(X)$} is the inclusion map.
\end{definition}

Let us verify the promised property:

\begin{lemma}\label{lem:iota[A]}
We have $\rng(\iota[A])=A$ for each predilator~$D$ and each $A\subseteq\tr(D)$.
\end{lemma}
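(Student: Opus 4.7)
The plan is to verify that $\tr(D[A]) = A$ as subsets of $\tr(D)$, after which the conclusion is immediate from the definitions: since the component $\iota[A]_n$ is the inclusion, $\tr(\iota[A])$ is also the identity inclusion $(n,\sigma)\mapsto(n,\sigma)$ from $\tr(D[A])$ into $\tr(D)$, and therefore $\rng(\iota[A])=\rng(\tr(\iota[A]))=\tr(D[A])$.

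To analyse $\tr(D[A])$, I would use the key observation that for elements $(n,\sigma)$ satisfying the minimality condition $\supp_n(\sigma)=n$, the normal form discussed in the proof of Theorem~\ref{thm:class-coded-equiv} degenerates. Indeed, by the uniqueness part of that proof, the parameter $a\in[n]^{<\omega}$ in $\sigma\nf D(\iota_a^n\circ\en_a)(\sigma_0)$ must be $\supp_n(\sigma)=n$, whence $\iota_a^n\circ\en_a$ is the identity on $n$ and one is forced to take $\sigma_0=\sigma$. Consequently, for such $\sigma$ the defining clause ``$\sigma\in D[A](n)$" from Definition~\ref{def:D[A]} simply reads $(n,\sigma)\in A$.

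For the inclusion $\tr(D[A])\subseteq A$, suppose $(n,\sigma)\in\tr(D[A])$. Since $\supp^{D[A]}$ is the restriction of $\supp^D$, the minimality condition $\supp_n^D(\sigma)=n$ holds, and the preceding observation yields $(n,\sigma)\in A$. For the converse, given $(n,\sigma)\in A\subseteq\tr(D)$, the same analysis shows that $\sigma$ lies in $D[A](n)$, and since $\supp_n^D(\sigma)=n$ transfers verbatim to the restricted support, we obtain $(n,\sigma)\in\tr(D[A])$.

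I do not expect any significant obstacle: the entire argument is a careful unwinding of the normal form apparatus combined with the fact that minimality collapses that apparatus to the identity. The one point to handle cleanly is the distinction between the two cases in Definition~\ref{def:D[A]} (class-sized $X\in\lo$ versus coded $X\in\lo_0$), but in both cases the argument applies verbatim with $X=n$.
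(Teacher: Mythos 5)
Your proposal is correct and follows essentially the same route as the paper: both arguments reduce the claim to $\tr(D[A])=A$ via the observation that $\iota[A]$ consists of inclusions, and both exploit the fact that the normal form of a trace element $(n,\sigma)$ degenerates to $a=n$, $\sigma_0=\sigma$, so that the membership clause for $D[A](n)$ collapses to $(n,\sigma)\in A$. The only difference is presentational: you isolate the identity $\rng(\iota[A])=\tr(D[A])$ up front, whereas the paper interleaves it with the two inclusions.
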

\begin{proof}
To establish the first inclusion, we consider $(n,\sigma)\in\rng(\iota[A])\subseteq\tr(D)$. Since $\iota[A]_n:D[A](n)\hookrightarrow D(n)$ is the inclusion, we must have $(n,\sigma)\in\tr(D[A])$, which entails $\sigma\in D[A](n)$. Due to $(n,\sigma)\in\tr(D)$ we have $\sigma\nf D(\iota_n^n\circ\en_n)(\sigma)$, where both $\iota_n^n$ and $\en_n$ is the identity on $n=\{0,\dots,n-1\}$. Now $(n,\sigma)\in A$ follows by the equivalence that defines $D[A](n)$. For the converse inclusion, we consider an arbitrary element $(n,\sigma)\in A\subseteq\tr(D)$. Once again we have $\sigma\nf D(\iota_n^n\circ\en_n)(\sigma)$, so that we get $\sigma\in D[A](n)$. Together with $\supp^{D[A]}_n(\sigma)=\supp^D_n(\sigma)=n$ we obtain $(n,\sigma)\in\tr(D[A])$ and then $(n,\sigma)=(n,\iota[A]_n(\sigma))=\tr(\iota[A])(n,\sigma)\in\rng(\iota[A])$.
\end{proof}

The following result entails that $D[A]$ and $\iota[A]$ are essentially unique.

\begin{proposition}\label{prop:trace-inclusion-to-morphism}
For all morphisms $\mu:D\To E$ and $\mu':D'\To E$ of predilators, the following are equivalent:
\begin{enumerate}[label=(\roman*)]
\item we have $\rng(\mu)\subseteq\rng(\mu')$,
\item there is a (necessarily unique) morphism $\nu:D\To D'$ with $\mu'\circ\nu=\mu$.
\end{enumerate}
\end{proposition}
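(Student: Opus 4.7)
The direction (ii)$\Rightarrow$(i) is immediate: if $\mu=\mu'\circ\nu$, then $\tr(\mu)=\tr(\mu')\circ\tr(\nu)$, so $\rng(\mu)\subseteq\rng(\mu')$. Uniqueness of~$\nu$ in~(ii) is also immediate from the hypothesis, because each component $\mu'_X:D'(X)\to E(X)$ is an embedding and thus injective, so $\mu'\circ\nu=\mu'\circ\nu'$ forces $\nu_X=\nu'_X$ for every~$X$. The content of the proposition is therefore the construction of~$\nu$ in~(i)$\Rightarrow$(ii).

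The plan is to define $\nu_X$ by recursion on normal forms. Given $\sigma\in D(X)$, Theorem~\ref{thm:class-coded-equiv} provides a unique normal form $\sigma\nf D(\iota_a^X\circ\en_a)(\sigma_0)$ with $(|a|,\sigma_0)\in\tr(D)$. By Lemma~\ref{lem:transfos-respect-supp} we have $\supp^E_{|a|}(\mu_{|a|}(\sigma_0))=\supp^D_{|a|}(\sigma_0)=|a|$, so $\tr(\mu)(|a|,\sigma_0)=(|a|,\mu_{|a|}(\sigma_0))\in\rng(\mu)\subseteq\rng(\mu')$. Hence there is a (necessarily unique) element $\sigma_0'\in D'(|a|)$ with $(|a|,\sigma_0')\in\tr(D')$ and $\mu'_{|a|}(\sigma_0')=\mu_{|a|}(\sigma_0)$; uniqueness uses that $\mu'_{|a|}$ is an embedding. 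I would then set
\begin{equation*}
\nu_X(\sigma):=D'(\iota_a^X\circ\en_a)(\sigma_0')\in D'(X).
\end{equation*}
Well-definedness is guaranteed by the uniqueness of normal forms together with the injectivity of~$\mu'_{|a|}$.

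The remaining verifications are routine unwindings. The equation $\mu'\circ\nu=\mu$ follows from naturality of $\mu'$:
\begin{equation*}
\mu'_X(\nu_X(\sigma))=E(\iota_a^X\circ\en_a)(\mu'_{|a|}(\sigma_0'))=E(\iota_a^X\circ\en_a)(\mu_{|a|}(\sigma_0))=\mu_X(\sigma).
\end{equation*}
For the order embedding property, if $\sigma<_{D(X)}\tau$ then $\mu'_X(\nu_X(\sigma))=\mu_X(\sigma)<_{E(X)}\mu_X(\tau)=\mu'_X(\nu_X(\tau))$, and $\mu'_X$ reflects the order. For naturality of~$\nu$ with respect to $f:X\to Y$, one uses the observation (already made before Definition~\ref{def:D[A]}) that if $\sigma\nf D(\iota_a^X\circ\en_a)(\sigma_0)$ then $D(f)(\sigma)\nf D(\iota_b^Y\circ\en_b)(\sigma_0)$ for $b:=[f]^{<\omega}(a)$, with $|b|=|a|$ and the same trace element. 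Applying the defining clause on both sides and using functoriality of $D'$ gives $\nu_Y(D(f)(\sigma))=D'(\iota_b^Y\circ\en_b)(\sigma_0')=D'(f)(\nu_X(\sigma))$.

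The only point that requires a moment of thought is well-definedness of~$\sigma_0'$, which is where the two key inputs enter: uniqueness of normal forms (Theorem~\ref{thm:class-coded-equiv}) pins down $(|a|,\sigma_0)$, while Lemma~\ref{lem:transfos-respect-supp} shifts the hypothesis $\rng(\mu)\subseteq\rng(\mu')$ from $E$ to the level of traces so that the preimage~$\sigma_0'$ exists. Once this is set up, everything else is forced.
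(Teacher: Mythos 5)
Your proposal is correct and follows essentially the same route as the paper: define $\nu_X$ on normal forms $\sigma\nf D(\iota_a^X\circ\en_a)(\sigma_0)$ by pulling the trace element $\tr(\mu)(|a|,\sigma_0)\in\rng(\mu)\subseteq\rng(\mu')$ back along $\mu'$, then verify $\mu'\circ\nu=\mu$, the embedding property, and naturality exactly as the paper does. No gaps.
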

\begin{proof}
To show that (ii) implies (i), we consider an arbitrary element $\tr(\mu)(n,\sigma)$ of $\rng(\mu)$. Assuming $\mu'\circ\nu=\mu$, we obtain
\begin{equation*}
\tr(\mu)(n,\sigma)=(n,\mu_n(\sigma))=(n,\mu'_n\circ\nu_n(\sigma))=\tr(\mu')(n,\nu_n(\sigma))\in\rng(\mu').
\end{equation*}
We now show that (i) implies~(ii). Given $\sigma\nf D(\iota_a^X\circ\en_a)(\sigma_0)\in D(X)$, we note
\begin{equation*}
(|a|,\mu_{|a|}(\sigma_0))=\tr(\mu)(|a|,\sigma_0)\in\rng(\mu).
\end{equation*}
Assuming $\rng(\mu)\subseteq\rng(\mu')$, there is a unique $\tau_0\in D'(|a|)$ with $\mu_{|a|}(\sigma_0)=\mu'_{|a|}(\tau_0)$. We can then define a function $\nu_X:D(X)\to D'(X)$ by setting
\begin{equation*}
\nu_X(\sigma)=D'(\iota_a^X\circ\en_a)(\tau_0)\quad\text{for $\sigma\nf D(\iota_a^X\circ\en_a)(\sigma_0)$ and $\mu_{|a|}(\sigma_0)=\mu'_{|a|}(\tau_0)$}.
\end{equation*}
For $\sigma\in D(X)$ as specified, one readily computes
\begin{multline*}
\mu_X'\circ\nu_X(\sigma)=\mu_X'\circ D'(\iota_a^X\circ\en_a)(\tau_0)=E(\iota_a^X\circ\en_a)\circ\mu'_{|a|}(\tau_0)=\\
=E(\iota_a^X\circ\en_a)\circ\mu_{|a|}(\sigma_0)=\mu_X\circ D(\iota_a^X\circ\en_a)(\sigma_0)=\mu_X(\sigma).
\end{multline*}
It follows that $\nu_X$ is an embedding, i.\,e., a morphism in~$\lo$. To establish naturality, consider an embedding $f:X\to Y$ and an element $\sigma\nf D(\iota_a^X\circ\en_a)(\sigma_0)\in D(X)$. For $b:=[f]^{<\omega}(a)$ we get $f\circ\iota_a^X\circ\en_a=\iota_b^Y\circ\en_b$ and $D(f)(\sigma)\nf D(\iota_b^Y\circ\en_b)(\sigma_0)$ as before. Consider the unique element $\tau_0\in D'(|a|)$ with $\mu_{|a|}(\sigma_0)=\mu'_{|a|}(\tau_0)$. In view of $|a|=|b|$ we get $\mu_{|b|}(\sigma_0)=\mu'_{|b|}(\tau_0)$ and hence
\begin{equation*}
\nu_Y\circ D(f)(\sigma)=D'(\iota_b^Y\circ\en_b)(\tau_0)=D'(f)\circ D'(\iota_a^X\circ\en_a)(\tau_0)=D'(f)\circ\nu_X(\sigma).
\end{equation*}
To see that $\nu$ is unique, it suffices to observe that each component $\mu'_X$ is an embedding and hence injective.
\end{proof}

Based on the previous considerations, it is straightforward to establish the following result, which is due to Girard~\cite[Theorem~4.2.7]{girard-pi2}.

\begin{proposition}\label{prop:pullback-dilator}
In the category of predilators, any two morphisms $\mu^i:E_i\To E$ with $i=0,1$ have a pullback. Two morphisms $\nu^i:D\To E_i$ with $\mu^0\circ\nu^0=\mu^1\circ\nu^1$ form a pullback of $\mu^0$ and $\mu^1$ if, and only if, we have $\rng(\mu^0)\cap\rng(\mu^1)\subseteq\rng(\mu^0\circ\nu^0)$.
\end{proposition}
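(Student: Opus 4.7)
The plan is to reduce the existence and characterisation of pullbacks to the range-based calculus of Proposition~\ref{prop:trace-inclusion-to-morphism}, together with the sub-predilator construction from Definition~\ref{def:D[A]} and Lemma~\ref{lem:iota[A]}. All the combinatorial work has already been packaged into those tools, so only bookkeeping remains.

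First, I would exhibit a canonical pullback. Set $A:=\rng(\mu^0)\cap\rng(\mu^1)\subseteq\tr(E)$ and consider the predilator $E[A]$ together with its inclusion $\iota[A]:E[A]\To E$, whose range equals $A$ by Lemma~\ref{lem:iota[A]}. Since $A\subseteq\rng(\mu^i)$ for $i=0,1$, Proposition~\ref{prop:trace-inclusion-to-morphism} yields unique morphisms $\pi^i:E[A]\To E_i$ with $\mu^i\circ\pi^i=\iota[A]$; in particular $\mu^0\circ\pi^0=\mu^1\circ\pi^1$. To verify the universal property, let $\sigma^i:C\To E_i$ be any competing cone, so that $\mu^0\circ\sigma^0=\mu^1\circ\sigma^1$. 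Then $\rng(\mu^0\circ\sigma^0)$ is contained in both $\rng(\mu^0)$ and $\rng(\mu^1)$, hence in $A=\rng(\iota[A])$, so Proposition~\ref{prop:trace-inclusion-to-morphism} produces a unique $\tau:C\To E[A]$ with $\iota[A]\circ\tau=\mu^0\circ\sigma^0$. Cancelling $\mu^0$ in $\mu^0\circ\pi^0\circ\tau=\iota[A]\circ\tau=\mu^0\circ\sigma^0$ and $\mu^1$ in $\mu^1\circ\pi^1\circ\tau=\iota[A]\circ\tau=\mu^1\circ\sigma^1$ (using that the components of $\mu^0$ and $\mu^1$ are embeddings, hence injective) gives $\pi^i\circ\tau=\sigma^i$; uniqueness of $\tau$ is inherited from Proposition~\ref{prop:trace-inclusion-to-morphism}.

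Next, I would establish the stated characterisation. Given any cone $\nu^i:D\To E_i$ with $\mu^0\circ\nu^0=\mu^1\circ\nu^1$, the range $\rng(\mu^0\circ\nu^0)$ is automatically contained in $A$. If, conversely, $A\subseteq\rng(\mu^0\circ\nu^0)$, then the argument of the previous paragraph goes through verbatim with $(D,\nu^0,\nu^1)$ in place of $(E[A],\pi^0,\pi^1)$: for an arbitrary cone $(C,\sigma^0,\sigma^1)$ the inclusion $\rng(\mu^0\circ\sigma^0)\subseteq A\subseteq\rng(\mu^0\circ\nu^0)$ gives, via Proposition~\ref{prop:trace-inclusion-to-morphism}, a unique $\tau:C\To D$ with $\mu^0\circ\nu^0\circ\tau=\mu^0\circ\sigma^0$, and injectivity of the components of $\mu^0$ and $\mu^1$ turns this into $\nu^i\circ\tau=\sigma^i$. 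For the converse direction, suppose $(D,\nu^0,\nu^1)$ is a pullback. Applying its universal property to the canonical cone $(E[A],\pi^0,\pi^1)$ produces $\kappa:E[A]\To D$ with $\nu^i\circ\kappa=\pi^i$, whence $A=\rng(\iota[A])=\rng(\mu^0\circ\pi^0)=\rng(\mu^0\circ\nu^0\circ\kappa)\subseteq\rng(\mu^0\circ\nu^0)$.

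I do not anticipate a genuine obstacle: the only point requiring care is to apply Proposition~\ref{prop:trace-inclusion-to-morphism} with matching codomains and to track which cancellation uses $\mu^0$ versus $\mu^1$. The heart of the proof is the observation that, under the present formalism, pullbacks correspond exactly to intersections of ranges inside the trace, which is precisely the content of $A=\rng(\mu^0)\cap\rng(\mu^1)$.
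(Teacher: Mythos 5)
Your proposal is correct and follows essentially the same route as the paper: the canonical pullback is $E[A]$ with $A=\rng(\mu^0)\cap\rng(\mu^1)$, and both directions of the characterisation are reduced to Proposition~\ref{prop:trace-inclusion-to-morphism} together with injectivity of the components of the $\mu^i$. The only cosmetic difference is that for the sufficiency of the range condition you re-run the universal-property argument directly for $(D,\nu^0,\nu^1)$, whereas the paper first produces an isomorphism $E[A]\To D$ and transports the universal property along it; both are immediate from the same lemmas.
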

Let us point out that the converse inclusion
\begin{equation*}
\rng(\mu^0\circ\nu^0)=\rng(\mu^1\circ\nu^1)\subseteq\rng(\mu^0)\cap\rng(\mu^1)
\end{equation*}
is automatic when we have $\mu^0\circ\nu^0=\mu^1\circ\nu^1$.
\begin{proof}
For existence we set $A:=\rng(\mu^0)\cap\rng(\mu^1)$ and consider $\iota[A]:E[A]\To E$. In view of $\rng(\iota[A])=A\subseteq\rng(\mu^i)$, we get morphisms $\xi^i:E[A]\To E_i$ with
\begin{equation*}
\mu^0\circ\xi^0=\iota[A]=\mu^1\circ\xi^1.
\end{equation*}
To show that these morphisms satisfy the universal property of pullbacks, we consider morphisms $\nu^i:D\To E_i$ with $\mu^0\circ\nu^0=\mu^1\circ\nu^1$. In view of
\begin{equation*}
\rng(\mu^0\circ\nu^0)=\rng(\mu^1\circ\nu^1)\subseteq\rng(\mu^0)\cap\rng(\mu^1)=A=\rng(\iota[A]),
\end{equation*}
there is a unique morphism $\zeta:D\To E[A]$ with
\begin{equation*}
\mu^0\circ\xi^0\circ\zeta=\mu^1\circ\xi^1\circ\zeta=\iota[A]\circ\zeta=\mu^0\circ\nu^0=\mu^1\circ\nu^1.
\end{equation*}
Since the components of $\mu^i$ are injective, we get $\xi^i\circ\zeta=\nu^i$, and $\zeta$ is still unique with this property. In order to establish the characterization in the second part of the proposition, we assume that the morphisms $\nu^i:D\To E_i$ form a pullback. Due to the universal property, we obtain a morphism $\chi:E[A]\To D$ with $\nu^i\circ\chi=\xi^i$, for $\xi^i:E[A]\To E_i$ as above. We can deduce the required inclusion
\begin{equation*}
\rng(\mu^0)\cap\rng(\mu^1)=\rng(\iota[A])=\rng(\mu^0\circ\xi^0)\subseteq \rng(\mu^0\circ\nu^0\circ\chi)\subseteq\rng(\mu^0\circ\nu^0).
\end{equation*}
For the converse implication, consider morphisms $\nu^i:D\To E_i$ with $\mu^0\circ\nu^0=\mu^1\circ\nu^1$ and $A=\rng(\mu^0)\cap\rng(\mu^1)\subseteq\rng(\mu^0\circ\nu^0)$. Since the converse inclusion is automatic, we now get an isomorphism $\chi:E[A]\To D$ with \mbox{$\iota[A]=\mu^0\circ\nu^0\circ\chi=\mu^1\circ\nu^1\circ\chi$} (as Proposition~\ref{prop:trace-inclusion-to-morphism} provides morphisms in both directions, which are inverses by uniqueness). Using $\chi$, the universal property for the morphisms $\nu^i:D\To E_i$ is readily reduced to the one for the morphisms $\xi^i:E[A]\To E_i$ from above.
\end{proof}

Girard~\cite[Theorem~4.4.4]{girard-pi2} has shown that any direct system in the category of predilators has a direct limit. However, the direct limit of a system of dilators does not need to be a dilator itself (i.\,e., it may not preserve well foundedness). Direct limits of predilators are constructed pointwise: for each argument, one forms the corresponding limit in the category of linear orders. We will see in Section~\ref{sect:construct-fixed-points} that (a~particular instance of) this construction can be carried out in~$\aca_0$. In the next section we will define ptykes in terms of a support condition (analogous to part~(ii) of Definition~\ref{def:dilator}), which is motivated by the following characterization.

\begin{proposition}\label{prop:direct-limit-dilators}
In the category of predilators, consider a direct system of objects $D_i$ and morphisms $\mu^{ij}:D_i\To D_j$, indexed by a directed set~$I$. For a collection of morphisms $\nu^i:D_i\To D$ with $\nu^j\circ\mu^{ij}=\nu^i$, the following are equivalent:
\begin{enumerate}[label=(\roman*)]
\item the morphisms $\nu^i:D_i\To D$ form a direct limit of the given system,
\item we have $\tr(D)=\bigcup\{\rng(\nu^i)\,|\,i\in I\}$.
\end{enumerate}
\end{proposition}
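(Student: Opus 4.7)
The plan is to treat the two implications by different strategies, both of which exploit the sub-predilator construction $D[A]$ together with Proposition~\ref{prop:trace-inclusion-to-morphism}.

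For the implication (i)$\To$(ii), I would set $A:=\bigcup_{i\in I}\rng(\nu^i)\subseteq\tr(D)$ and aim to show $A=\tr(D)$. Since $\rng(\nu^i)\subseteq A=\rng(\iota[A])$ by Lemma~\ref{lem:iota[A]}, Proposition~\ref{prop:trace-inclusion-to-morphism} produces unique morphisms $\tilde\nu^i:D_i\To D[A]$ with $\iota[A]\circ\tilde\nu^i=\nu^i$. Componentwise injectivity of $\iota[A]$ lifts the cocone identity $\nu^j\circ\mu^{ij}=\nu^i$ to $\tilde\nu^j\circ\mu^{ij}=\tilde\nu^i$, so the $\tilde\nu^i$ form a cocone under the given system. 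The universal property from~(i) now yields $\chi:D\To D[A]$ with $\chi\circ\nu^i=\tilde\nu^i$ for every $i$. Then $\iota[A]\circ\chi$ and $\operatorname{id}_D$ are both morphisms $D\To D$ factoring the original cocone $(\nu^i)$, so the uniqueness clause forces $\iota[A]\circ\chi=\operatorname{id}_D$. Since each component of $\iota[A]$ is an inclusion of orders, this gives $D[A](X)=D(X)$ for all $X$; evaluating at $X=n$ with $a=n$, $\en_a=\operatorname{id}$, and $\iota_n^n=\operatorname{id}$, the defining equivalence of $D[A](n)$ from Definition~\ref{def:D[A]} then yields $\tr(D)\subseteq A$, as required.

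For the converse (ii)$\To$(i), the task is to factor an arbitrary cocone $\xi^i:D_i\To E$ uniquely through~$D$. I define $\chi_X:D(X)\to E(X)$ on normal forms: given $\sigma\nf D(\iota_a^X\circ\en_a)(\sigma_0)$, assumption~(ii) supplies some $i\in I$ and $\sigma_1\in D_i(|a|)$ with $\nu^i_{|a|}(\sigma_1)=\sigma_0$, and I set
\begin{equation*}
\chi_X(\sigma):=E(\iota_a^X\circ\en_a)\circ\xi^i_{|a|}(\sigma_1).
\end{equation*}
Well-definedness is the central check: if also $\nu^j_{|a|}(\sigma_2)=\sigma_0$, pick $k\in I$ with $k\geq i,j$; then injectivity of the embedding $\nu^k_{|a|}$ together with $\nu^k\circ\mu^{ik}=\nu^i$ and $\nu^k\circ\mu^{jk}=\nu^j$ forces $\mu^{ik}_{|a|}(\sigma_1)=\mu^{jk}_{|a|}(\sigma_2)$, and the cocone identities $\xi^k\circ\mu^{ik}=\xi^i$ and $\xi^k\circ\mu^{jk}=\xi^j$ equate the two candidate values. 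Naturality of $\chi$ is verified by the same recipe as in Proposition~\ref{prop:trace-inclusion-to-morphism}, using that for $f:X\to Y$ and $b:=[f]^{<\omega}(a)$ we have $|b|=|a|$ and $D(f)(\sigma)\nf D(\iota_b^Y\circ\en_b)(\sigma_0)$; order preservation of each component reduces, after unifying two elements at a common stage over $a\cup b$, to order preservation of a single embedding $\xi^k_{|a\cup b|}$. Finally, $\chi\circ\nu^i=\xi^i$ is checked by putting an arbitrary $\sigma\in D_i(X)$ into normal form, and uniqueness of $\chi$ follows because its value on each $\sigma\in D(X)$ is forced by the normal-form decomposition and the required identity $\chi\circ\nu^i=\xi^i$.

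I expect the main obstacle to be the well-definedness and naturality of $\chi$ in the direction (ii)$\To$(i): one must simultaneously juggle normal forms across orders $X$ and $Y$, indices $i,j,k$ in the directed system, and the interaction of the connecting morphisms $\mu^{ij}$ with supports. By contrast, the direction (i)$\To$(ii) is comparatively clean because the categorical bookkeeping is absorbed into Proposition~\ref{prop:trace-inclusion-to-morphism} and the universal property of the direct limit, reducing the argument to the single set-theoretic identity $A=\tr(D)$.
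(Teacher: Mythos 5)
Your proposal is correct and follows essentially the same route as the paper: for (i)$\To$(ii) it factors the cocone through $\iota[A]:D[A]\To D$ with $A=\bigcup_i\rng(\nu^i)$ and uses the uniqueness clause of the universal property to force $\iota[A]\circ\chi=\operatorname{id}_D$, and for (ii)$\To$(i) it defines the mediating morphism on normal forms and checks well-definedness via directedness exactly as the paper does. The only (immaterial) difference is how the final inclusion $\tr(D)\subseteq A$ is extracted in the first direction.
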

\begin{proof}
To show that~(i) implies~(ii), we set $A:=\bigcup\{\rng(\nu^i)\,|\,i\in I\}$ and consider the morphism $\iota[A]:D[A]\To D$. For each $i\in I$, the inclusion $\rng(\nu^i)\subseteq\rng(D[A])$ yields a unique $\xi^i:D_i\To D[A]$ with $\iota[A]\circ\xi^i=\nu^i$. Due to $\iota[A]\circ\xi^j\circ\mu^{ij}=\nu^j\circ\mu^{ij}=\nu^i$, uniqueness entails $\xi^j\circ\mu^{ij}=\xi^i$. Assuming~(i), the universal property of direct limits provides a morphism~$\chi:D\To D[A]$ with $\chi\circ\nu^i=\xi^i$ for each~$i\in I$. In view of
\begin{equation*}
\iota[A]\circ\chi\circ\nu=\iota[A]\circ\xi^i=\nu^i,
\end{equation*}
the uniqueness part of the universal property entails that $\iota[A]\circ\chi$ is the identity on the predilator~$D$. We can deduce the crucial inclusion
\begin{equation*}
\tr(D)=\rng(\iota[A]\circ\chi)\subseteq\rng(\iota[A])=A=\bigcup\{\rng(\nu^i)\,|\,i\in I\}.
\end{equation*}
In order to show that~(ii) implies~(i), we consider a family of morphisms~$\xi^i:D_i\To E$ with $\xi^j\circ\mu^{ij}=\xi^i$. Given an order~$X$, we recall that any $\sigma\in D(X)$ has a unique normal form $\sigma\nf D(\iota_a^X\circ\en_a)(\sigma_0)$ with $(|a|,\sigma_0)\in\tr(D)$. Assuming~(ii), we get an~$i\in I$ with $(|a|,\sigma_0)\in\rng(\nu^i)$, which amounts to $\sigma_0\in\rng(\nu^i_{|a|})$. In order to obtain a morphism $\zeta:D\To E$ with $\zeta\circ\nu^i=\xi^i$, we can only set
\begin{equation*}
\zeta_X(\sigma)=E(\iota_a^X\circ\en_a)\circ\xi^i_{|a|}(\tau)\quad\text{for $\sigma\nf D(\iota_a^X\circ\en_a)(\sigma_0)$ and $\sigma_0=\nu^i_{|a|}(\tau)$}.
\end{equation*}
To see that this is well defined, we consider an equality $\nu^i_{|a|}(\tau)=\nu^j_{|a|}(\tau')$. Since~$I$ is directed, we may pick a $k$ that lies above both~$i$ and~$j$. We then get
\begin{equation*}
\nu^k_{|a|}\circ\mu^{ik}_{|a|}(\tau)=\nu^i_{|a|}(\tau)=\nu^j_{|a|}(\tau')=\nu^k_{|a|}\circ\mu^{jk}_{|a|}(\tau'),
\end{equation*}
which implies $\mu^{ik}_{|a|}(\tau)=\mu^{jk}_{|a|}(\tau')$ and then $\xi^i_{|a|}(\tau)=\xi^j_{|a|}(\tau')$, as needed to see that $\zeta_X$ is well defined. Given $\sigma\nf D_i(\iota_a^X\circ\en_a)(\sigma_0)\in D_i(X)$, one can invoke Lemma~\ref{lem:transfos-respect-supp} to see that $\nu^i_X(\sigma)=D(\iota_a^X\circ\en_a)(\nu^i_{|a|}(\sigma_0))$ is in normal form as well. This yields
\begin{equation*}
\zeta_X\circ\nu^i_X(\sigma)=E(\iota_a^X\circ\en_a)\circ\xi^i_{|a|}(\sigma_0)=\xi^i_X\circ D_i(\iota_a^X\circ\en_a)(\sigma_0)=\xi^i_X(\sigma).
\end{equation*}
In order to deduce that $\zeta_X$ is an order embedding, it suffices to note that any $\sigma\nf D(\iota_a^X\circ\en_a)(\sigma_0)\in D(X)$ with $(|a|,\sigma_0)\in\rng(\nu^i)$ lies in the range of~$\nu^i_X$. Finally, we verify that $\zeta$ is natural. Consider an embedding~$f:X\to Y$ and an element $\sigma\nf D(\iota_a^X\circ\en_a)(\sigma_0)\in D(X)$. As before, we see that $b:=[f]^{<\omega}(a)$ leads to $|b|=|a|$ and $f\circ\iota_a^X\circ\en_a=\iota_b^Y\circ\en_b$, which entails $D(f)(\sigma)\nf D(\iota_b^Y\circ\en_b)(\sigma_0)$. For $\sigma_0=\nu^i_{|a|}(\tau)=\nu^i_{|b|}(\tau)$ we get
\begin{equation*}
\zeta_Y\circ D(f)(\sigma)=E(\iota_b^Y\circ\en_b)\circ\xi^i_{|b|}(\tau)=E(f\circ\iota_a^X\circ\en_a)\circ\xi^i_{|a|}(\tau)=E(f)\circ\zeta_X(\sigma),
\end{equation*}
as needed for naturality.
\end{proof}

\section{Ptykes in second order arithmetic}\label{sect:ptykes}

If we consider (well founded) linear orders as objects of ground type, then \mbox{(pre-)}\allowbreak{}dilators form the first level in a hierarchy of countinuous functionals of finite type. The functionals in this hierarchy are called (pre-)ptykes (singular ptyx; see~\cite{girard-normann92}). On the second level of this hierarchy, we have transformations that take predilators as input. The output can be a linear order, a predilator, or even another functional from the second level---at least intuitively, these possibilities are equivalent modulo Currying. We will only consider transformations of predilators into predilators, and the term $2$-ptyx will be reserved for these. The number~$2$ indicates the type level and will often be omitted. In the present section, we define $2$-ptykes in terms of a support condition, which is analogous to part~(ii) of Definition~\ref{def:dilator}; we show that the given definition is equivalent to the original definition by Girard, which invokes direct limits and pullbacks; and we discuss an example.

In the previous section we have discussed the category of predilators. We have seen that the trace~$\tr(D)$ of a predilator~$D$ plays an analogous role to the underlying set of a linear order. Given a morphism $\nu:D\To E$ of predilators, we have constructed a function $\tr(\nu):\tr(D)\to\tr(E)$ with range~\mbox{$\rng(\nu):=\rng(\tr(\nu))\subseteq\tr(E)$}. One readily checks that this yields a functor $\tr(\cdot)$ from the category of predilators to the category of sets, as presupposed in part~(ii) of the following definition. The formalization of the definition in second order arithmetic will be discussed below.

\begin{definition}\label{def:ptyx}
A $2$-preptyx consists of
\begin{enumerate}[label=(\roman*)]
\item a functor $P$ from predilators to predilators and
\item a natural transformation $\Supp:\tr(P(\cdot))\to[\tr(\cdot)]^{<\omega}$ such that we have
\begin{equation*}
\{\sigma\in\tr(P(E))\,|\,\Supp_E(\sigma)\subseteq\rng(\mu)\}\subseteq\rng(P(\mu))
\end{equation*}
for any morphism $\mu:D\To E$ of predilators.
\end{enumerate}
If $P(D)$ is a dilator for every dilator~$D$, then $P=(P,\Supp)$ is called a $2$-ptyx.
\end{definition}

As in the case of dilators, we will refer to the inclusion in part~(ii) as the support condition. The converse inclusion is, once again, automatic: given an arbitrary element $\sigma=\tr(P(\mu))(\sigma_0)\in\rng(P(\mu))$, we can invoke naturality to get
\begin{equation*}
\Supp_E(\sigma)=\Supp_E(\tr(P(\mu))(\sigma_0))=[\tr(\mu)]^{<\omega}(\Supp_D(\sigma_0))\subseteq\rng(\mu).
\end{equation*}
Clause~(i) of Definition~\ref{def:ptyx} is not completely precise, because we have distinguished between coded and class-sized predilators---even though the two notions give rise to equivalent categories (cf.~the discussion after Proposition~\ref{prop:eta-natural-in-D}). We can and will consider preptykes as classes-sized functions. However, the arguments of these functions should certainly be set-sized. This means that the arguments of a preptyx must be coded predilators in the sense of Definition~\ref{def:coded-predil}. We agree that the values of our preptykes are coded predilators as well, even though this is less essential. To define $P(D)$ as a coded predilator, it suffices to specify its values on finite orders $n=\{0,\dots,n-1\}\in\lo_0$ and morphisms between them. In practice, we often define the action of $P(D)$ on (morphisms of) infinite linear orders as well; this means that we describe a class-sized predilator of which~$P(D)$ is the coded restriction.

Whenever we speak of a preptyx, we assume that it is given by $\Delta^0_1$-definitions of the following relations, possibly with additional number and set parameters: First, we have the relations
\begin{equation*}
\sigma\in P(D)(X),\quad\sigma<_{P(D)(X)}\tau,\quad P(D)(f)(\sigma)=\tau,\quad \supp^{P(D)}_X(\sigma)=a
\end{equation*}
that define the predilator~$P(D)=(P(D),\supp^{P(D)})$ relative to the coded predilator~$D\subseteq\mathbb N$ (cf.~the discussion after Definition~\ref{def:coded-predil}). As in the previous section, these $\Delta^0_1$-definitions ensure that the coded predilator $P(D)$ exists as a set, already over~$\rca_0$. Secondly, we require a $\Delta^0_1$-formula $P(\mu)_X(\sigma)=\tau$ that defines the morphism $P(\mu)$ relative to the morphism $\mu\subseteq\mathbb N$ (cf.~the discussion before Definition~\ref{def:morphism-dilators}). Finally, we demand a $\Delta^0_1$-definition of the relation~$\Supp_D(\sigma)=a$, where $a$ refers to the numerical code of a finite subset of~$\tr(D)$.

By using parameters, one can quantify over $\Delta^0_1$-definable families of preptykes. In the following, all general definitions and results should be read as schemas, with one instance for each $\Delta^0_1$-definable family. We will not construct a universal family of $2$-preptykes in detail, as this is quite technical and not strictly necessary for the present paper. Let us, nevertheless, sketch the construction: Given a predilator~$D$ and a finite set $a\in[\tr(D)]^{<\omega}$, the constructions from the previous section yield a predilator~$D[a]$ and a morphism $\iota[a]:D[a]\To D$ with~$\rng(\iota[a])=a$. For $a\subseteq b$, Proposition~\ref{prop:trace-inclusion-to-morphism} ensures that there is a unique morphism $\nu^{ab}:D[a]\To D[b]$ with $\iota[b]\circ\nu^{ab}=\iota[a]$. This turns the collection of predilators~$D[a]$ into a directed system indexed by~$[\tr(D)]^{<\omega}$. Invoking Proposition~\ref{prop:direct-limit-dilators}, we learn that the morphisms $\iota[a]:D[a]\To D$ form a direct limit. Given a preptyx~$P$, Proposition~\ref{prop:ptykes-direct-limit} below entails that $P(D)$ is the (essentially unique) limit of the system of predilators $P(D[a])$ and morphisms $P(\nu^{ab}):P(D[a])\To P(D[b])$. In other words, $P$~is essentially determined by its action on (morphisms between) predilators with finite trace. The latter are determined by a finite amount of information (by~\cite[Proposition~4.3.7]{girard-pi2} or, implicitly, Definition~\ref{def:extend-coded-dil} above). By restricting to arguments with finite trace, one can thus define a notion of coded preptyx, analogous to Definition~\ref{def:coded-predil}. The following result is the main ingredient of the construction that we have sketched. It also shows that our definition of $2$-ptykes coincides with Girard's original one.

\begin{proposition}\label{prop:ptykes-direct-limit}
The following are equivalent for an endofunctor~$P$ of predilators:
\begin{enumerate}[label=(\roman*)]
\item The functor~$P$ preserves direct limits and pullbacks.
\item There is a natural transformation $\Supp:\tr(P(\cdot))\to[\tr(\cdot)]^{<\omega}$ that satisfies the support condition from part~(ii) of Definition~\ref{def:ptyx}.
\end{enumerate}
If a natural transformation as in~(ii) exists, then it is unique.
\end{proposition}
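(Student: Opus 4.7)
The plan is to derive both implications from the range-based characterizations of pullbacks (Proposition~\ref{prop:pullback-dilator}) and direct limits (Proposition~\ref{prop:direct-limit-dilators}) in the category of predilators, combined with the family of subobjects $\iota[A]:E[A]\To E$ from Definitions~\ref{def:D[A]} and~\ref{def:iota[A]}. This lifts one type level up the pattern used for dilators in Section~\ref{sect:cat-dilators}: the trace $\tr(\cdot)$ plays the role of the underlying-set functor, and $E[a]$ for finite $a\subseteq\tr(E)$ plays the role of the suborder generated by a finite set.

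For (ii)$\To$(i), I would handle direct limits first. If $\nu^i:D_i\To D$ is a direct limit indexed by a directed set $I$, then $\tr(D)=\bigcup_{i\in I}\rng(\nu^i)$ by Proposition~\ref{prop:direct-limit-dilators}. For $\sigma\in\tr(P(D))$, the \emph{finite} set $\Supp_D(\sigma)$ lies in a single $\rng(\nu^i)$ by directedness, so the support condition yields $\sigma\in\rng(P(\nu^i))$; hence $\tr(P(D))=\bigcup_{i\in I}\rng(P(\nu^i))$, which by Proposition~\ref{prop:direct-limit-dilators} makes $P(\nu^i)$ a direct limit. For pullbacks, given $\mu^i:E_i\To E$ with pullback $\nu^i:D\To E_i$, any $\sigma\in\rng(P(\mu^0))\cap\rng(P(\mu^1))$ satisfies $\Supp_E(\sigma)\subseteq\rng(\mu^0)\cap\rng(\mu^1)\subseteq\rng(\mu^0\circ\nu^0)$ by Proposition~\ref{prop:pullback-dilator}, whence the support condition places $\sigma\in\rng(P(\mu^0\circ\nu^0))$, and Proposition~\ref{prop:pullback-dilator} certifies that $P(\nu^i)$ form a pullback.

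For (i)$\To$(ii), I would define
\begin{equation*}
\Supp_E(\sigma):=\bigcap\{a\in[\tr(E)]^{<\omega}\mid\sigma\in\rng(P(\iota[a]))\}.
\end{equation*}
Two things need checking. First, the family being intersected is nonempty: the predilators $E[a]$ with the canonical transition morphisms (from Proposition~\ref{prop:trace-inclusion-to-morphism}) form a directed system whose direct limit is $\iota[a]:E[a]\To E$ by Lemma~\ref{lem:iota[A]} and Proposition~\ref{prop:direct-limit-dilators}; preservation of direct limits by $P$ then yields $\tr(P(E))=\bigcup_a\rng(P(\iota[a]))$. Second, the family is closed under pairwise intersection, i.e.\ $\rng(P(\iota[a]))\cap\rng(P(\iota[b]))\subseteq\rng(P(\iota[a\cap b]))$: using Lemma~\ref{lem:iota[A]} to compute ranges, Proposition~\ref{prop:pullback-dilator} identifies the canonical morphisms out of $E[a\cap b]$ as a pullback of $\iota[a]$ and $\iota[b]$, which $P$ preserves, and a further application of Proposition~\ref{prop:pullback-dilator} delivers the range inclusion. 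Thus $\Supp_E(\sigma)$ is the least member of the family, and the support condition is built in. Naturality in $E$ is proved by an analogous pullback argument: given $\mu:D\To E$ and $\sigma\in\tr(P(D))$, pulling $\iota[\Supp_E(\tr(P(\mu))(\sigma))]$ back along $\mu$ produces a finite $c\subseteq\tr(D)$ with $\sigma\in\rng(P(\iota[c]))$ and $\tr(\mu)(c)\subseteq\Supp_E(\tr(P(\mu))(\sigma))$, giving one inclusion $[\tr(\mu)]^{<\omega}(\Supp_D(\sigma))\subseteq\Supp_E(\tr(P(\mu))(\sigma))$; the reverse inclusion is immediate from factoring $\mu\circ\iota[\Supp_D(\sigma)]$ through $\iota[\tr(\mu)(\Supp_D(\sigma))]$ via Proposition~\ref{prop:trace-inclusion-to-morphism}.

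The principal technical obstacle is the pullback identification $E[a\cap b]\cong E[a]\times_E E[b]$ underlying both the closure-under-intersection step and the naturality argument; but since $\rng(\iota[c])=c$ by Lemma~\ref{lem:iota[A]}, the hypothesis of Proposition~\ref{prop:pullback-dilator} reduces to the trivial identity $a\cap b=\rng(\iota[a\cap b])$, and no deeper calculation is needed. Uniqueness of $\Supp$ then falls out of the minimality characterization: any $\Supp'$ satisfying~(ii) must, by the support condition applied to $\mu:=\iota[\Supp'_E(\sigma)]$, place $\Supp'_E(\sigma)$ into the intersecting family, so $\Supp_E(\sigma)\subseteq\Supp'_E(\sigma)$; the reverse inclusion follows by naturality of $\Supp'$ along each $\iota[b]$ with $\sigma\in\rng(P(\iota[b]))$, which forces $\Supp'_E(\sigma)\subseteq b$ and hence $\Supp'_E(\sigma)\subseteq\Supp_E(\sigma)$.
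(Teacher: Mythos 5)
Your proposal is correct and follows essentially the same route as the paper's proof: (ii)$\To$(i) via the range characterizations of Propositions~\ref{prop:direct-limit-dilators} and~\ref{prop:pullback-dilator} together with finiteness of supports, and (i)$\To$(ii) by identifying $\Supp_E(\sigma)$ as the least finite $a$ with $\sigma\in\rng(P(\iota[a]))$, using preservation of the direct limit $E=\varinjlim_a E[a]$ for existence and preservation of the pullback $E[a\cap b]$ for closure under intersection. The only differences are presentational — you spell out the pullback half of (ii)$\To$(i), which the paper dispatches with ``one argues similarly,'' and you phrase naturality via pulling $\iota[\Supp_E(\cdot)]$ back along $\mu$ rather than the paper's explicit element chase — but the content is the same.
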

\begin{proof}
We first assume~(ii) and deduce~(i). To show that~$P$ preserves direct limits, we consider a direct system of predilators~$D_i$ and morphisms $\mu^{ij}:D_i\To D_j$, indexed by a directed set~$I$. Assume that the morphisms $\nu^i:D_i\To D$ form a direct limit. We need to show that the morphisms $P(\nu^i):P(D_i)\To P(D)$ form a direct limit of the system of objects $P(D_i)$ and morphisms $P(\mu^{ij}):P(D_i)\To P(D_j)$. In view of Proposition~\ref{prop:direct-limit-dilators}, we have $\tr(D)=\bigcup\{\rng(\nu^i)\,|\,i\in I\}$ and need to deduce
\begin{equation*}
\tr(P(D))=\bigcup\{\rng(P(\nu^i))\,|\,i\in I\}.
\end{equation*}
Here the inclusion $\supseteq$ is trivial, since $\rng(P(\nu^i))$ is defined as the range of the function $\tr(P(\nu^i)):\tr(P(D_i))\to\tr(P(D))$. Given an arbitrary $\sigma\in\tr(P(D))$, we invoke~(ii) to consider the finite set $\Supp_D(\sigma)\subseteq\tr(D)$. The latter is contained in a single set~$\rng(\nu^i)$, since~$I$ is directed. By the support condition we can conclude \mbox{$\sigma\in\rng(P(\nu^i))$}, as required. To show that~$P$ preserves pullbacks one argues similarly, based on the characterization from Proposition~\ref{prop:pullback-dilator}. Next, we show that there is at most one natural transformation as in~(ii). Given such a transformation, the support condition ensures that $\Supp_D(\sigma)=:a=\rng(\iota[a])$ implies $\sigma\in\rng(P(\iota[a]))$, where $\iota[a]:D[a]\To D$ is the morphism from Lemma~\ref{lem:iota[A]}. Conversely, naturality entails that $\sigma\in\rng(P(\iota[a]))$ implies $\Supp_D(\sigma)\subseteq\rng(\iota[a])$, as above. Hence $\Supp_D(\sigma)$ is determined as the smallest set $a\in[\tr(D)]^{<\omega}$ with $\sigma\in\rng(P(\iota[a]))$. Assuming~(i), we now show that a smallest set with this property exists for any element~$\sigma\in\tr(P(D))$. Above, we have seen that the morphisms $\iota[a]:D[a]\To D$ form a limit of some directed system indexed by $a\in[\tr(D)]^{<\omega}$. Given that~$P$ preserves direct limits, the same holds for the morphisms $P(\iota[a]):P(D[a])\To P(D)$. Now Proposition~\ref{prop:direct-limit-dilators} yields an $a\in[\tr(D)]^{<\omega}$ with $\sigma\in\rng(P(\iota[a]))$. We may assume that~$a$ is minimal, in the sense that no proper subset has the same property. To conclude that~$a$ is smallest, we need to show that $\sigma\in\rng(P(\iota[b]))$ implies $a\subseteq b$. Invoking Proposition~\ref{prop:trace-inclusion-to-morphism}, define $\nu^a:D[a\cap b]\To D[a]$ and $\nu^b:D[a\cap b]\To D[b]$ by stipulating $\iota[a]\circ\nu^a=\iota[a\cap b]$ and $\iota[b]\circ\nu^b=\iota[a\cap b]$. From Lemma~\ref{lem:iota[A]} and Proposition~\ref{prop:pullback-dilator} we know that $\nu^a$ and $\nu^b$ form a pullback of $\iota[a]$ and $\iota[b]$. By~(i) and Proposition~\ref{prop:pullback-dilator} we can conclude
\begin{equation*}
\sigma\in\rng(P(\iota[a]))\cap\rng(P(\iota[b]))\subseteq\rng(P(\iota[a]\circ\nu^a))=\rng(P(\iota[a\cap b])).
\end{equation*}
The minimality of~$a$ yields $a\cap b=a$ and hence $a\subseteq b$, as required. To establish~(ii), we now define $\Supp_D(\sigma)$ as the smallest $a\in[\tr(D)]^{<\omega}$ with $\sigma\in\rng(P(\iota[a]))$. To establish the support condition, it suffices to note that any morphism $\mu:D_0\To D$ with $a\subseteq\tr(\mu)$ factors through $\iota[a]$, by Proposition~\ref{prop:trace-inclusion-to-morphism}. It remains to check that the given definition is natural. For this purpose we consider a morphism~\mbox{$\mu:D\To E$} and an arbitrary element $\sigma\in\tr(P(D))$. Writing $a:=\Supp_D(\sigma)$, we need to show that $\Supp_E(\tr(P(\mu))(\sigma))$ is equal to $b:=[\tr(\mu)]^{<\omega}(a)=\rng(\mu\circ\iota[a])$. Consider the morphism~$\iota[b]:E[b]\To E$, and invoke Proposition~\ref{prop:trace-inclusion-to-morphism} to obtain $\mu^a:D[a]\To E[b]$ with $\iota[b]\circ\mu^a=\mu\circ\iota[a]$. By the choice of~$a$, write $\sigma=\tr(P(\iota[a]))(\sigma_0)$ and compute
\begin{equation*}
\tr(P(\mu))(\sigma)=\tr(P(\mu\circ\iota[a]))(\sigma_0)=\tr(P(\iota[b]\circ\mu^a))(\sigma_0)\in\rng(P(\iota[b])).
\end{equation*}
This yields $d:=\Supp_E(\tr(P(\mu))(\sigma))\subseteq b$, which allows us to consider the set $c\subseteq a$ with $d=[\tr(\mu)]^{<\omega}(c)=\rng(\mu\circ\iota[c])$. Again by Proposition~\ref{prop:trace-inclusion-to-morphism}, we obtain a morphism $\nu:E[d]\To D[c]$ with $\mu\circ\iota[c]\circ\nu=\iota[d]$. The choice of~$d$ allows us to write
\begin{equation*}
\tr(P(\mu))(\sigma)=\tr(P(\iota[d]))(\sigma_1)=\tr(P(\mu\circ\iota[c]\circ\nu))(\sigma_1).
\end{equation*}
Since $\tr(P(\mu))$ is injective, we get $\sigma=\tr(P(\iota[c]\circ\nu))(\sigma_1)\in\rng(P(\iota[c]))$. By the minimality of $a=\Supp_D(\sigma)$, this yields $c=a$ and hence $d=[\tr(\mu)]^{<\omega}(c)=b$.
\end{proof}

The following variant of the support functions is convenient, because it does not force us to consider the trace of~$P(D)$, which can be hard to describe. We recall that every element $\sigma\in P(D)(X)$ has a unique normal form $\sigma\nf P(D)(\iota_a^X\circ\en_a)(\sigma_0)$ with $(|a|,\sigma_0)\in\tr(P(D))$, as shown in the proof of Theorem~\ref{thm:class-coded-equiv}.

\begin{definition}\label{def:ptyx-support-variant}
Consider a $2$-preptyx $P=(P,\Supp)$. For each predilator~$D$ and each linear order~$X$, we define a function
\begin{equation*}
\Supp_{D,X}:P(D)(X)\to[\tr(D)]^{<\omega}
\end{equation*}
by setting $\Supp_{D,X}(\sigma):=\Supp_D(|a|,\sigma_0)$ for $\sigma\nf P(D)(\iota_a^X\circ\en_a)(\sigma_0)$.
\end{definition}

Let us record basic properties of our modified support functions:

\begin{lemma}\label{lem:ptyx-support-variant}
For any preptyx~$P$, the functions $\Supp_{D,X}$ are natural in~$D$ and~$X$, in the sense that we have
\begin{equation*}
\Supp_{E,X}\circ P(\mu)_X=[\tr(\mu)]^{<\omega}\circ\Supp_{D,X}\quad\text{and}\quad\Supp_{D,Y}\circ P(D)(f)=\Supp_{D,X},
\end{equation*}
for any morphism $\mu:D\To E$ of predilators and any order embedding~$f:X\to Y$. Furthermore, the support condition
\begin{equation*}
\{\sigma\in P(E)(X)\,|\,\Supp_{E,X}(\sigma)\subseteq\rng(\mu)\}\subseteq\rng(P(\mu)_X)
\end{equation*}
is satisfied for any morphism $\mu:D\To E$ and any linear order~$X$.
\end{lemma}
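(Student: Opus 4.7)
The plan is to reduce everything to statements at the level of the trace, where Definition~\ref{def:ptyx} directly supplies naturality and the support condition for~$\Supp$. The key preliminary observation, used throughout, is that the various operations in play preserve normal forms: if $\sigma \nf P(D)(\iota_a^X \circ \en_a)(\sigma_0)$ with $(|a|,\sigma_0) \in \tr(P(D))$, then applying a morphism $P(\mu)_X$ or a functorial action $P(D)(f)$ produces an element whose normal form uses the ``same'' trace datum $(|a|,\sigma_0)$ up to the expected transformations. This is what allows the formula $\Supp_{D,X}(\sigma) := \Supp_D(|a|,\sigma_0)$ to behave well.

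For naturality in~$D$, I would take $\sigma \nf P(D)(\iota_a^X \circ \en_a)(\sigma_0)$ and apply $P(\mu)_X$. Using the naturality of $P(\mu)$ as a morphism of predilators, I rewrite $P(\mu)_X(\sigma) = P(E)(\iota_a^X\circ\en_a)(P(\mu)_{|a|}(\sigma_0))$. I must then verify that $(|a|,P(\mu)_{|a|}(\sigma_0)) \in \tr(P(E))$, so this is in normal form; this follows from Lemma~\ref{lem:transfos-respect-supp} applied to $P(\mu) : P(D) \To P(E)$, which shows the support is preserved. Hence $\Supp_{E,X}(P(\mu)_X(\sigma)) = \Supp_E(\tr(P(\mu))(|a|,\sigma_0))$, and the required identity reduces to the naturality of $\Supp$ from clause~(ii) of Definition~\ref{def:ptyx}.

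For naturality in~$X$, I repeat the trick used in the proof of Proposition~\ref{prop:ptykes-direct-limit}: for $\sigma \nf P(D)(\iota_a^X\circ\en_a)(\sigma_0)$ and $b := [f]^{<\omega}(a)$, one has $|a|=|b|$ and $f \circ \iota_a^X \circ \en_a = \iota_b^Y \circ \en_b$, so functoriality gives $P(D)(f)(\sigma) \nf P(D)(\iota_b^Y\circ\en_b)(\sigma_0)$ with the \emph{same} trace element $(|a|,\sigma_0) = (|b|,\sigma_0)$. The identity $\Supp_{D,Y}(P(D)(f)(\sigma)) = \Supp_D(|a|,\sigma_0) = \Supp_{D,X}(\sigma)$ is then immediate from the definition.

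For the support condition, I start from $\sigma \nf P(E)(\iota_a^X\circ\en_a)(\sigma_0)$ with $\Supp_{E,X}(\sigma) = \Supp_E(|a|,\sigma_0) \subseteq \rng(\mu) = \rng(\tr(\mu))$, and apply the support condition of Definition~\ref{def:ptyx} to the trace element $(|a|,\sigma_0) \in \tr(P(E))$. This yields some $(|a|,\tau_0) \in \tr(P(D))$ with $P(\mu)_{|a|}(\tau_0) = \sigma_0$, and then
\begin{equation*}
\sigma = P(E)(\iota_a^X\circ\en_a)(P(\mu)_{|a|}(\tau_0)) = P(\mu)_X(P(D)(\iota_a^X\circ\en_a)(\tau_0)) \in \rng(P(\mu)_X),
\end{equation*}
by naturality of $P(\mu)$. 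There is no serious obstacle; the whole lemma is essentially the bookkeeping needed to translate the trace-based formulation in Definition~\ref{def:ptyx} into a more usable $X$-indexed one, and the only point requiring a little care is the verification that normal forms are respected in the first step of each part, for which Lemma~\ref{lem:transfos-respect-supp} is the critical input.
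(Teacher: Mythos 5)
Your proposal is correct and follows essentially the same route as the paper: the naturality in~$D$ is reduced to clause~(ii) of Definition~\ref{def:ptyx} via the normal-form computation justified by Lemma~\ref{lem:transfos-respect-supp}, naturality in~$X$ follows from the preservation of normal forms under $P(D)(f)$, and the support condition is pulled back from the trace-level condition exactly as the paper indicates (the paper merely asserts this last step is "readily deduced"; your explicit verification is the intended one).
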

\begin{proof}
To establish the naturality properties, we consider an element
\begin{equation*}
\sigma\nf P(D)(\iota_a^X\circ\en_a)(\sigma_0)\in P(D)(X).
\end{equation*}
If $\mu:D\To E$ is a morphism of predilators, then so is $P(\mu)$. Invoking Lemma~\ref{lem:transfos-respect-supp}, we see that $(|a|,\sigma_0)\in\tr(P(D))$ entails $(|a|,P(\mu)_{|a|}(\sigma_0))\in\tr(P(E))$. Hence
\begin{equation*}
P(\mu)_X(\sigma)=P(\mu)_X(P(D)(\iota_a^X\circ\en_a)(\sigma_0))=P(E)(\iota_a^X\circ\en_a)(P(\mu)_{|a|}(\sigma_0))
\end{equation*}
is in normal form. Using the naturality property from Definition~\ref{def:ptyx}, we get
\begin{multline*}
\Supp_{E,X}\circ P(\mu)_X(\sigma)=\Supp_E(|a|,P(\mu)_{|a|}(\sigma_0))=\Supp_E\circ\tr(P(\mu))(|a|,\sigma_0)=\\
=[\tr(\mu)]^{<\omega}\circ\Supp_D(|a|,\sigma_0)=[\tr(\mu)]^{<\omega}\circ\Supp_{D,X}(\sigma).
\end{multline*}
For naturality with respect to~$f:X\to Y$, it suffices to recall that we get
\begin{equation*}
P(D)(f)(\sigma)\nf P(D)(\iota_b^Y\circ\en_b)(\sigma_0)\quad\text{with}\quad b=[f]^{<\omega}(a),
\end{equation*}
as in the discussion that precedes Definition~\ref{def:D[A]} (note $|b|=|a|$). The support condition is readily deduced from the one in Definition~\ref{def:ptyx}.
\end{proof}

For the following converse, we recall that elements of~$\tr(P(D))$ have the form $(n,\sigma)$ with $\sigma\in P(D)(n)$, where $n=\{0,\dots,n-1\}$ is ordered as usual.

\begin{lemma}\label{lem:ptyx-support-variant-back}
Consider an endofunctor $P$ of predilators and a family of functions $\Supp_{D,X}$ with the properties from Lemma~\ref{lem:ptyx-support-variant}. We obtain a preptyx $(P,\Supp)$ if we define $\Supp_D:\tr(P(D))\to[\tr(D)]^{<\omega}$ by $\Supp_D(n,\sigma):=\Supp_{D,n}(\sigma)$.
\end{lemma}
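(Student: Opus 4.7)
The plan is to verify the two remaining clauses of Definition~\ref{def:ptyx}: that $\Supp$ so defined is a natural transformation $\tr(P(\cdot))\To[\tr(\cdot)]^{<\omega}$, and that it satisfies the support condition. Both should follow by unwinding the definition $\Supp_D(n,\sigma):=\Supp_{D,n}(\sigma)$ and invoking the corresponding statements of Lemma~\ref{lem:ptyx-support-variant}, with one small observation drawn from Lemma~\ref{lem:transfos-respect-supp} to deal with the minimality clause of the trace.

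For naturality in $D$ along a morphism $\mu:D\To E$, I would take $(n,\sigma)\in\tr(P(D))$ and recall from Definition~\ref{def:trace-morphism} that $\tr(P(\mu))(n,\sigma)=(n,P(\mu)_n(\sigma))$; that this lands in $\tr(P(E))$ is precisely Lemma~\ref{lem:transfos-respect-supp} applied to the predilator morphism $P(\mu):P(D)\To P(E)$. Then a short calculation
\[
\Supp_E(\tr(P(\mu))(n,\sigma))=\Supp_{E,n}(P(\mu)_n(\sigma))=[\tr(\mu)]^{<\omega}(\Supp_{D,n}(\sigma))=[\tr(\mu)]^{<\omega}(\Supp_D(n,\sigma))
\]
uses the first naturality identity of Lemma~\ref{lem:ptyx-support-variant}.

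For the support condition, suppose $(n,\sigma)\in\tr(P(E))$ satisfies $\Supp_E(n,\sigma)\subseteq\rng(\mu)$. Then $\Supp_{E,n}(\sigma)\subseteq\rng(\mu)$, and the support condition from Lemma~\ref{lem:ptyx-support-variant} (applied at $X=n$) produces $\sigma'\in P(D)(n)$ with $P(\mu)_n(\sigma')=\sigma$. The only non-mechanical step is to show $(n,\sigma')\in\tr(P(D))$, i.e.~$\supp^{P(D)}_n(\sigma')=n$; for this I invoke Lemma~\ref{lem:transfos-respect-supp} for $P(\mu)$ to get $\supp^{P(D)}_n(\sigma')=\supp^{P(E)}_n(P(\mu)_n(\sigma'))=\supp^{P(E)}_n(\sigma)=n$, where the last equality is the assumption $(n,\sigma)\in\tr(P(E))$. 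Then $\tr(P(\mu))(n,\sigma')=(n,\sigma)$ exhibits $(n,\sigma)\in\rng(P(\mu))$, as required. The ``main obstacle'', such as it is, is simply to remember that Lemma~\ref{lem:transfos-respect-supp} is what supplies the missing minimality condition on the preimage; the rest is routine translation between the two flavours of support.
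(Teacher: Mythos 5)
Your proof is correct and follows essentially the same route as the paper: the support condition is verified by applying the Lemma~\ref{lem:ptyx-support-variant} version at $X=n$ and then using Lemma~\ref{lem:transfos-respect-supp} to see that the preimage $\sigma'$ satisfies the minimality condition and hence lies in $\tr(P(D))$, which is exactly the paper's ``crucial'' step. The naturality computation, which the paper dismisses as straightforward, is spelled out correctly in your version.
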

\begin{proof}
Naturality in~$D$ is straightforward. It remains to check the support condition from part~(ii) of Definition~\ref{def:ptyx}. Consider an element $(n,\sigma)\in\tr(P(E))$ with
\begin{equation*}
\Supp_E(n,\sigma)=\Supp_{E,n}(\sigma)\subseteq\rng(\mu).
\end{equation*}
The support condition from Lemma~\ref{lem:ptyx-support-variant} yields $\sigma=P(\mu)_n(\sigma_0)$ with $\sigma_0\in P(D)(n)$. Crucially, Lemma~\ref{lem:transfos-respect-supp} ensures $(n,\sigma_0)\in\tr(P(D))$, so that we get
\begin{equation*}
(n,\sigma)=(n,P(\mu)_n(\sigma_0))=\tr(P(\mu))(n,\sigma_0)\in\rng(P(\mu)),
\end{equation*}
as required.
\end{proof}

The functions $\Supp_D:\tr(P(D))\to[\tr(D)]^{<\omega}$ are unique by Proposition~\ref{prop:ptykes-direct-limit}. As in the proof of the latter, the value $\Supp_{D,X}(\sigma)$ is uniquely determined as the smallest $a\subseteq\tr(D)$ with $\sigma\in\rng(P(\iota[a])_X)$. Due to uniqueness, the constructions from the previous two lemmas must me inverse to each other. The proof of Lemma~\ref{lem:ptyx-support-variant-back} does not use the assumption that the functions $\Supp_{D,X}$ are natural in~$X$. Hence the latter is automatic when the other properties from Lemma~\ref{lem:ptyx-support-variant}~are~satisfied.

The following example reveals a connection with a more familiar topic: it shows that the transformation of a normal function into its derivative is related to the notion of ptyx. A much simpler ptyx will be described in Example~\ref{ex:ptyx-succ} below.

\begin{example}\label{ex:deriv-ptyx}
A normal predilator consists of a predilator~$D=(D,\supp^D)$ and a natural family of functions $\mu^D_X:X\to D(X)$ such that we have
\begin{equation*}
\sigma<_{D(X)}\mu^D_X(x)\quad\Leftrightarrow\quad\supp^D_X(\sigma)\subseteq\{x'\in X\,|\,x'<_X x\},
\end{equation*}
for any linear order~$X$ and arbitrary elements $x\in X$ and $\sigma\in D(X)$. If $D=(D,\mu^D)$ is a normal dilator, then the induced function on the ordinals (which maps~$\alpha$ to the order type of~$D(\alpha)$) is normal in the usual sense, due to P.~Aczel~\cite{aczel-phd,aczel-normal-functors}. The latter has also shown that one can transform~$D$ into a normal (pre-)dilator~$\partial D$ that induces the derivative of the normal function induced by~$D$. Together with M.~Rathjen, the present author has established that the construction of~$\partial D$ can be implemented in~$\rca_0$. Over the latter, the statement that $\partial D$ is a dilator (i.\,e.,~preserves well foundedness) when the same holds for~$D$ is equivalent to \mbox{$\Pi^1_1$-bar} induction~\cite{freund-rathjen_derivatives,freund-ordinal-exponentiation}. The transformation of~$D$ into~$P(D):=\partial D$ is no~$2$-ptyx in the strict sense, since it only acts on normal predilators. Nevertheless, it is instructive to verify that all other conditions from Definition~\ref{def:ptyx} are satisfied. For this purpose, we recall that $\partial D(X)$ is recursively generated by the following clauses (cf.~\cite[Definition~4.1]{freund-rathjen_derivatives}):
\begin{itemize}
\item For each element $x\in X$, there is a term $\mu^{\partial D}_x\in\partial D(X)$.
\item Given a finite set $a\subseteq\partial D(X)$, we get a term $\xi\langle a,\sigma\rangle\in\partial D(X)$ for each $\sigma\in D(|a|)$ with $(|a|,\sigma)\in\tr(D)$, except when $\langle a,\sigma\rangle=\langle\{\mu^{\partial D}_x\},\mu^D_1(0)\rangle$ for some $x\in X$ (where $\mu^D_1:1=\{0\}\to D(1)$ witnesses the normality of~$D$).
\end{itemize}
To explain the exception in the second clause, we write $f$ for the normal function induced by~$D$ and $f'$ for its derivative. Intuitively speaking, a term $\xi\langle\{\mu^{\partial D}_x\},\mu^D_1(0)\rangle$ with $x=\alpha\in\beta=X$ would denote the ordinal~$f(f'(\alpha))$. The latter is equal to~$f'(\alpha)$, which is already represented by the term~$\mu^{\partial D}_x$. We refer to~\cite{freund-rathjen_derivatives} for the definitions that turn $X\mapsto\partial D(X)$ into a normal (pre-)dilator. To turn $D\mapsto\partial D=P(D)$ into a functor, we consider a morphism~$\nu:D\To E$ of normal predilators (which requires $\nu_1(\mu^D_1(0))=\mu^E_1(0)$, according to~\cite[Definition~2.20]{freund-rathjen_derivatives}). In order to obtain a morphism $P(\nu):P(D)\To P(E)$, define the components $P(\nu)_X:\partial D(X)\to\partial E(X)$ by the recursive clauses
\begin{align*}
P(\nu)_X(\mu^{\partial D}_x)&=\mu^{\partial E}_x,\\
P(\nu)_X(\xi\langle a,\sigma\rangle)&=\xi\langle[P(\nu)_X]^{<\omega}(a),\nu_{|a|}(\sigma)\rangle.
\end{align*}
By induction over the complexity of terms in~$\partial D(X)$, one can simultaneously verify that~$P(\nu)_X$ has values in~$\partial E(X)$ and preserves the order (cf.~\cite[Definition~4.3]{freund-rathjen_derivatives}). The simultaneous induction is needed to ensure that $[P(\nu)_X]^{<\omega}(a)$ and~$a$ have the same cardinality, which yields
\begin{equation*}
(|[P(\nu)_X]^{<\omega}(a)|,\nu_{|a|}(\sigma))=(|a|,\nu_{|a|}(\sigma))=\tr(\nu)(|a|,\sigma)\in\tr(E),
\end{equation*}
as required for~$P(\nu)_X(\xi\langle a,\sigma\rangle)\in\partial E(X)$. It is straightforward to verify that~$P(\nu)$ is natural (cf.~\cite[Definition~4.6]{freund-rathjen_derivatives}), and that its construction turns~$P$ into a functor. In order to show that we have something like a~$2$-ptyx (except for the restriction to normal predilators), we need to construct support functions
\begin{equation*}
\Supp_{D,X}:P(D)(X)=\partial D(X)\to[\tr(D)]^{<\omega}
\end{equation*}
as in Lemma~\ref{lem:ptyx-support-variant}. We recursively define
\begin{align*}
\Supp_{D,X}(\mu^{\partial D}_x)&=\emptyset,\\
\Supp_{D,X}(\xi\langle a,\sigma\rangle)&=\{(|a|,\sigma)\}\cup\bigcup\{\Supp_{D,X}(\rho)\,|\,\rho\in a\}.
\end{align*}
Naturality in~$D$ is readily verified. In order to establish the support condition, we consider a morphism $\nu:D\To E$ and show
\begin{equation*}
\Supp_{E,X}(\tau)\subseteq\rng(\nu)\quad\To\quad\tau\in\rng(P(\nu)_X)
\end{equation*}
by induction over $\tau\in P(E)(X)=\partial E(X)$. First observe that the conclusion holds for~$\tau=\mu^{\partial E}_x=P(\nu)_X(\mu^{\partial D}_x)$. Now consider $\tau=\xi\langle a,\sigma\rangle$ with $\Supp_{E,X}(\tau)\subseteq\rng(\nu)$. Inductively we get $a\subseteq\rng(P(\nu)_X)$, say $a=[P(\nu)_X]^{<\omega}(b)$ with $b\subseteq\partial D(X)$. We also have $(|a|,\sigma)\in\rng(\nu)$, which yields $\sigma=\nu_{|a|}(\sigma_0)=\nu_{|b|}(\sigma_0)$ for some $\sigma_0\in D(|b|)$. Invoking Lemma~\ref{lem:transfos-respect-supp}, we see that $(|a|,\sigma)\in\tr(E)$ entails $(|b|,\sigma_0)\in\tr(D)$. We can conclude $\xi\langle b,\sigma_0\rangle\in\partial D(X)$, since $\langle b,\sigma_0\rangle=\langle\{\mu^{\partial D}_x\},\mu^D_1(0)\rangle$ would entail $a=\{\mu^{\partial E}_x\}$ and $\sigma=\nu_1\circ\mu^D_1(0)=\mu^E_1(0)$. It follows that
\begin{equation*}
\tau=\xi\langle a,\sigma\rangle=\xi\langle[P(\nu)_X]^{<\omega}(b),\nu_{|b|}(\sigma_0)\rangle=P(\nu)_X(\xi\langle b,\sigma_0\rangle)
\end{equation*}
lies in the range of $P(\nu)_X$, as required.
\end{example}

\section{Normal $2$-ptykes}\label{sect:normality-ptykes}

The present section introduces a normality condition for $2$-ptykes, which is related to the notion of normal function on the ordinals. We then show that any $2$-ptyx is bounded by a normal one; this amounts to the construction of the ptyx~$P^*$ from the argument that was sketched in the introduction.

In Example~\ref{ex:deriv-ptyx}, we have recalled the notion of normal predilator. The crucial point is that any normal predilator~$D$ preserves initial segments (cf.~Girard's notion of flower~\cite{girard-pi2}): Assume the range of $f:X\to Y$ is an initial segment of~$Y$. To show that $\rng(D(f))\subseteq D(Y)$ is an initial segment as well, we consider an inequality $\sigma<_{D(Y)} D(f)(\tau)$. Aiming at a contradiction, we assume that $\sigma$ does not lie in the range of~$\rng(D(f))$. Invoking part~(ii) of Definition~\ref{def:dilator}, we may pick an element $y\in\supp^D_Y(\sigma)$ with $y\notin\rng(f)$. Since the latter is an initial segment, we get
\begin{equation*}
\supp^D_Y(D(f)(\tau))=[f]^{<\omega}(\supp^D_X(\tau))\subseteq\rng(f)\subseteq\{y'\in Y\,|\,y'<_Y y\}.
\end{equation*}
Also note $\supp^D_Y(\sigma)\not\subseteq\{y'\in Y\,|\,y'<_Y y\}$. If $\mu^D$ witnesses that~$D$ is normal, we get
\begin{equation*}
D(f)(\tau)<_{D(Y)}\mu^D_Y(y)\leq_{D(Y)}\sigma,
\end{equation*}
which contradicts our assumption. To explain why initial segments are relevant, we recall that direct limits of well orders do not need to be well founded. Indeed, any linear order is the direct limit of its finite (and hence well founded) suborders. However, the limit of a directed system of well orders is well founded when the range of any morphism in the system is an initial segment of its codomain. We now introduce a corresponding notion on the next type level:

\begin{definition}\label{def:segment}
A morphism $\nu:D\To E$ is called a segment if the range of each component $\nu_X:D(X)\to E(X)$ is an initial segment of the linear order~$E(X)$.
\end{definition}

We point out that the definition applies to both coded and class-sized predilators. In the coded case one only considers orders of the form $X=n=\{0,\dots,n-1\}$. The two variants of the definition are compatible with the equivalence between coded and class-sized predilators (cf.~Lemma~\ref{lem:morphism-dils-extend}):

\begin{lemma}\label{lem:ext-segment}
Consider a morphism $\nu:D\To E$ of coded predilators. If $\nu$ is a segment, then so is $\overline\nu:\overline D\To\overline E$.
\end{lemma}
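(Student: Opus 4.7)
The plan is to unfold the definitions and push the hypothesis through the support condition. Fix a linear order $X$ and suppose $(a,\sigma)\in\overline E(X)$ satisfies $(a,\sigma)<_{\overline E(X)}\overline\nu_X(b,\tau)=(b,\nu_{|b|}(\tau))$ for some $(b,\tau)\in\overline D(X)$; I want to produce $\sigma_0\in D(|a|)$ with $\nu_{|a|}(\sigma_0)=\sigma$ and $(|a|,\sigma_0)\in\tr(D)$, since this gives $(a,\sigma_0)\in\overline D(X)$ and $\overline\nu_X(a,\sigma_0)=(a,\sigma)$.

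Unfolding the order on $\overline E(X)$ and using the naturality of $\nu$ in the form $E(|\iota_b^{a\cup b}|)\circ\nu_{|b|}=\nu_{|a\cup b|}\circ D(|\iota_b^{a\cup b}|)$, the assumed inequality becomes
\begin{equation*}
E(|\iota_a^{a\cup b}|)(\sigma)\,<_{E(|a\cup b|)}\,\nu_{|a\cup b|}\bigl(D(|\iota_b^{a\cup b}|)(\tau)\bigr).
\end{equation*}
Since $\nu$ is a segment, the range of $\nu_{|a\cup b|}$ is an initial segment of $E(|a\cup b|)$, so there is $\rho\in D(|a\cup b|)$ with $E(|\iota_a^{a\cup b}|)(\sigma)=\nu_{|a\cup b|}(\rho)$.

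Now I apply supports. By the naturality of $\supp^E$ together with $(|a|,\sigma)\in\tr(E)$, the left hand side has $E$-support $[|\iota_a^{a\cup b}|]^{<\omega}(|a|)=\rng(|\iota_a^{a\cup b}|)$. By Lemma~\ref{lem:transfos-respect-supp} applied to $\nu$, the right hand side has $E$-support equal to $\supp^D_{|a\cup b|}(\rho)$. Hence $\supp^D_{|a\cup b|}(\rho)=\rng(|\iota_a^{a\cup b}|)$, and the support condition in Definition~\ref{def:dilator}(ii), applied to $D$ and the morphism $|\iota_a^{a\cup b}|:|a|\to|a\cup b|$, yields $\sigma_0\in D(|a|)$ with $\rho=D(|\iota_a^{a\cup b}|)(\sigma_0)$.

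Using naturality of $\nu$ once more, $E(|\iota_a^{a\cup b}|)(\nu_{|a|}(\sigma_0))=\nu_{|a\cup b|}(\rho)=E(|\iota_a^{a\cup b}|)(\sigma)$, and since $E(|\iota_a^{a\cup b}|)$ is an embedding we conclude $\nu_{|a|}(\sigma_0)=\sigma$. Finally, Lemma~\ref{lem:transfos-respect-supp} gives $\supp^D_{|a|}(\sigma_0)=\supp^E_{|a|}(\sigma)=|a|$, so $(|a|,\sigma_0)\in\tr(D)$ and $(a,\sigma_0)\in\overline D(X)$ maps to $(a,\sigma)$ under $\overline\nu_X$. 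This establishes that each $\overline\nu_X$ has initial image. There is no real obstacle beyond careful bookkeeping with naturality and the support condition; the only slightly subtle step is the passage from the segment property of $\nu_{|a\cup b|}$ to that of $\nu_{|a|}$, which is precisely where Lemma~\ref{lem:transfos-respect-supp} is needed.
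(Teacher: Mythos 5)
Your proof is correct and follows essentially the same route as the paper's: unfold the order on $\overline E(X)$, use naturality to place the right-hand side in $\rng(\nu_{|a\cup b|})$, apply the segment property to get a preimage $\rho$, compute supports via Lemma~\ref{lem:transfos-respect-supp} so that the support condition for $D$ pulls $\rho$ back along $|\iota_a^{a\cup b}|$, and conclude by injectivity of $E(|\iota_a^{a\cup b}|)$ together with the trace check. The only cosmetic difference is that you note the support equals $\rng(|\iota_a^{a\cup b}|)$ exactly, where the paper only uses the inclusion.
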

\begin{proof}
Consider an order~$X$ and an inequality
\begin{equation*}
(a,\sigma)<_{\overline E(X)}\overline\nu_X(b,\tau)=(b,\nu_{|b|}(\tau)).
\end{equation*}
According to Definition~\ref{def:extend-coded-dil}, the latter amounts to
\begin{equation*}
E(|\iota_a^{a\cup b}|)(\sigma)<_{E(|a\cup b|)}E(|\iota_b^{a\cup b}|)\circ\nu_{|b|}(\tau)=\nu_{|a\cup b|}\circ D(|\iota_b^{a\cup b}|)(\tau)\in\rng(\nu_{|a\cup b|}).
\end{equation*}
Given that $\nu:D\To E$ is a segment, we get $E(|\iota_a^{a\cup b}|)(\sigma)=\nu_{|a\cup b|}(\sigma_0)$ for some element~$\sigma_0\in D(|a\cup b|)$. Invoking Lemma~\ref{lem:transfos-respect-supp}, we can compute
\begin{multline*}
\supp^D_{|a\cup b|}(\sigma_0)=\supp^E_{|a\cup b|}\circ\nu_{|a\cup b|}(\sigma_0)=\supp^E_{|a\cup b|}\circ E(|\iota_a^{a\cup b}|)(\sigma)=\\
=[|\iota_a^{a\cup b}|]^{<\omega}\circ\supp^E_{|a|}(\sigma)\subseteq\rng(|\iota_a^{a\cup b}|).
\end{multline*}
Now the support condition from part~(ii) of Definition~\ref{def:dilator} yields $\sigma_0=D(|\iota_a^{a\cup b}|)(\sigma_1)$ for some $\sigma_1\in D(|a|)$. We can deduce
\begin{equation*}
E(|\iota_a^{a\cup b}|)(\sigma)=\nu_{|a\cup b|}\circ D(|\iota_a^{a\cup b}|)(\sigma_1)=E(|\iota_a^{a\cup b}|)\circ\nu_{|a|}(\sigma_1),
\end{equation*}
which implies $\sigma=\nu_{|a|}(\sigma_1)$. Recall that $(a,\sigma)\in\overline D(X)$ entails $(|a|,\sigma)\in\tr(D)$. By Lemma~\ref{lem:transfos-respect-supp} we get $(|a|,\sigma_1)\in\tr(D)$ and hence $(a,\sigma_1)\in\overline D(X)$. This yields
\begin{equation*}
(a,\sigma)=(a,\nu_{|a|}(\sigma_1))=\overline\nu_X(a,\sigma_1)\in\rng(\overline\nu_X),
\end{equation*}
as needed to show that~$\overline\nu$ is a segment.
\end{proof}

The following is similar to the corresponding notion for dilators, which is itself related to the usual notion of normal function on the ordinals  (cf.~Example~\ref{ex:deriv-ptyx} and the discussion at the beginning of the present section).

\begin{definition}\label{def:ptyx-normal}
A $2$-preptyx~$P$ is called normal if $P(\nu):P(D)\To P(E)$ is a segment whenever the same holds for~$\nu:D\To E$.
\end{definition}

In Section~\ref{sect:construct-fixed-points} we will construct a minimal fixed point $D\cong P(D)$ of a given \mbox{$2$-preptyx}~$P$. We will see that~$D$ is a dilator (rather than just a predilator) when $P$ is a normal $2$-ptyx. The following example shows that normality is essential. We provide full details, because the construction will be needed later.

\begin{example}\label{ex:ptyx-succ}
Given a linear order~$X$, we define $X+1=X\cup\{\top\}$ as the order with a new biggest element~$\top$. If~$D$ is a (pre-)dilator, we get a (pre-)dilator~$D+1$ by setting $(D+1)(X):=D(X)+1$ and
\begin{align*}
(D+1)(f)(\sigma)&:=\begin{cases}
D(f)(\sigma) & \text{if $\sigma\in D(X)\subseteq (D+1)(X)$},\\
\top & \text{if $\sigma=\top$},
\end{cases}\\
\supp^{D+1}_X(\sigma)&:=\begin{cases}
\supp^D_X(\sigma) & \text{if $\sigma\in D(X)\subseteq (D+1)(X)$},\\
\emptyset & \text{if $\sigma=\top$},
\end{cases}
\end{align*}
where $f:X\to Y$ is an embedding. To turn $D\mapsto D+1$ into a functor, we transform each morphism $\nu:D\To E$ into the morphism $\nu+1:D+1\To E+1$ with
\begin{equation*}
(\nu+1)_X(\sigma):=\begin{cases}
\nu_X(\sigma) & \text{if $\sigma\in D(X)\subseteq (D+1)(X)$},\\
\top & \text{if $\sigma=\top$}.
\end{cases}
\end{equation*}
By Lemma~\ref{lem:ptyx-support-variant-back}, we obtain a ptyx if we define $\Supp_{D,X}:D(X)+1\to[\tr(D)]^{<\omega}$ by
\begin{equation*}
\Supp_{D,X}(\sigma):=\begin{cases}
\{(|a|,\sigma_0)\} & \text{if $\sigma\nf D(\iota_a^X\circ\en_a)(\sigma_0)\in D(X)$},\\
\emptyset & \text{if $\sigma=\top$}.
\end{cases}
\end{equation*}
If $D\cong D+1$ is a fixed point, then $D(0)\cong D(0)+1$ cannot be a well order, so that $D$ is no dilator. To see that~$D\mapsto D+1$ is not normal, we consider the constant dilators with values $D(X)=0$ and~$E(X)=1=\{0\}$. The unique mor\-phism $\nu:D\To E$ (with the empty function as components) is a segment. We~have
\begin{equation*}
0<_{(E+1)(0)}\top=(\nu+1)_0(\top)\in\rng((\nu+1)_0)
\end{equation*}
but $0\notin\rng((\nu+1)_0)$, which shows that $\nu+1$ is no segment.
\end{example}

In the rest of this section, we show that each preptyx~$P$ can be majorized by a normal preptyx~$P^*$, in the sense that there is a morphism $P(D)+1\To P^*(D+1)$ for each predilator~$D$. To motivate the summand~$1$, we recall the corresponding construction for functions from ordinals to ordinals (cf.~the introduction). If $h$ is any such function, then we can define a normal function~$g$ by setting
\begin{equation*}
g(\alpha):=\sum_{\gamma<\alpha}h(\gamma)+1,
\end{equation*}
or more formally $g(0)=0$, $g(\alpha+1)=g(\alpha)+h(\alpha)+1$ and $g(\lambda)=\sup_{\gamma<\lambda}g(\gamma)$ for~$\lambda$ limit. The successor clause reveals that $\gamma+1\leq\alpha$ implies $h(\gamma)+1\leq g(\alpha)$. On the level of dilators, the summand~$1$ cannot be avoided, at least not uniformly. To make this precise, let us say that a predilator $D$ is weakly normal if the range of $D(f):D(X)\to D(Y)$ is an initial segment of $D(Y)$ whenever the range of  the embedding $f:X\to Y$ is an initial segment of~$Y$ (cf.~the discussion at the beginning of the present section).

\begin{lemma}\label{lem:weakly-normal}
Assume that the predilator~$D$ is uniformly majorized by a weakly normal predilator~$E$, in the sense that there is a natural transformation~$\nu:D\To E$. Then $D$ itself must already be weakly normal.
\end{lemma}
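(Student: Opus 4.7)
My plan is to verify the defining property of weak normality for $D$ directly, using $\nu$ to pull information back from $E$ and then invoking Lemma~\ref{lem:transfos-respect-supp} together with the support condition for predilators.

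So assume $f:X\to Y$ has range that is an initial segment of $Y$. To show that $\rng(D(f))$ is an initial segment of $D(Y)$, I would consider an arbitrary inequality $\sigma<_{D(Y)}D(f)(\tau)$ with $\tau\in D(X)$, and aim to produce some $\sigma'\in D(X)$ with $\sigma=D(f)(\sigma')$. Applying the (order-preserving) component $\nu_Y$ and using naturality yields
\begin{equation*}
\nu_Y(\sigma)<_{E(Y)}\nu_Y(D(f)(\tau))=E(f)(\nu_X(\tau)).
\end{equation*}
Since $E$ is weakly normal, $\rng(E(f))$ is an initial segment of $E(Y)$, so $\nu_Y(\sigma)=E(f)(\rho)$ for some $\rho\in E(X)$.

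Now I transfer this information to $D$ by means of supports. By Lemma~\ref{lem:transfos-respect-supp} we have $\supp^E\circ\nu=\supp^D$, and combining this with naturality of $\supp^E$ gives
\begin{equation*}
\supp^D_Y(\sigma)=\supp^E_Y(\nu_Y(\sigma))=\supp^E_Y(E(f)(\rho))=[f]^{<\omega}(\supp^E_X(\rho))\subseteq\rng(f).
\end{equation*}
The support condition from part~(ii) of Definition~\ref{def:dilator}, applied to $D$ and $f$, then yields $\sigma\in\rng(D(f))$, as required.

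There is no real obstacle here: the argument is a two-line transport along $\nu$ followed by an appeal to the support condition. The one point worth noting is that we do not need to relate $\sigma'$ to $\rho$ in any direct way; weak normality of $E$ is only used to obtain the set-theoretic containment $\supp^D_Y(\sigma)\subseteq\rng(f)$, after which the support condition for $D$ does all the remaining work.
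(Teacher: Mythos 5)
Your proposal is correct and follows essentially the same argument as the paper: push the inequality forward along $\nu$, use weak normality of $E$ to write $\nu_Y(\sigma)=E(f)(\rho)$, deduce $\supp^D_Y(\sigma)\subseteq\rng(f)$ via Lemma~\ref{lem:transfos-respect-supp} and naturality of supports, and conclude with the support condition for $D$. Your closing remark is also accurate; the paper likewise never relates the preimage in $D(X)$ to $\rho$ directly.
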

\begin{proof}
Assume that the range of the embedding~$f:X\to Y$ is an initial segment of~$Y$, and consider an inequality $\sigma<_{D(Y)}D(f)(\tau)\in\rng(D(f))$. We get
\begin{equation*}
\nu_Y(\sigma)<_{E(Y)}\nu_Y\circ D(f)(\tau)=E(f)\circ\nu_X(\tau)\in\rng(E(f)).
\end{equation*}
Due to the assumption that~$E$ is weakly normal, this yields $\nu_Y(\sigma)=E(f)(\sigma_0)$ for some $\sigma_0\in E(X)$. Invoking Lemma~\ref{lem:transfos-respect-supp}, we get
\begin{equation*}
\supp^D_Y(\sigma)=\supp^E_Y\circ\nu_Y(\sigma)=\supp^E_Y\circ E(f)(\sigma_0)=[f]^{<\omega}\circ\supp^E_X(\sigma_0)\subseteq\rng(f).
\end{equation*}
Now the support condition from part~(ii) of Definition~\ref{def:dilator} yields $\sigma\in\rng(D(f))$, as needed to show that $D$ is weakly normal.
\end{proof}

Let us come back to the construction of~$P^*$. The inequality $\gamma<\alpha$ on the level of ordinals corresponds to a segment $D_0\To D$ on the level of predilators. Informally, we would like to reproduce the definition of~$g$ by setting
\begin{equation*}
P^*(D):=\sum\{P(D_0)+1\,|\,\text{there is a ``proper" segment~$D_0\To D$}\}.
\end{equation*}
A result of Girard (see e.\,g.~\cite[Lemma~2.11]{girard-normann85}) suggests a linear order on the summands. In order to make precise sense of our intuitive definition, we analyse the collection of segments~$D_0\To D$ in terms of the trace~$\tr(D)$.

\begin{definition}\label{def:segments-trace}
Given a predilator~$D$, we define a relation $\ll$ on the trace~$\tr(D)$ by stipulating that $(m,\sigma)\ll(n,\tau)$ holds if, and only if, we have
\begin{equation*}
D(f)(\sigma)<_{D(X)} D(g)(\tau)
\end{equation*}
for all embeddings $f:m\to X$ and $g:n\to X$ into a linear order~$X$.
\end{definition}

The following shows that the relation~$\ll$ is $\Delta^0_1$-definable. It also shows that it make no difference whether we consider~$D$ as a class-sized or as a coded dilator.

\begin{lemma}
We already have $(m,\sigma)\ll(n,\tau)$ if the condition from Definition~\ref{def:segments-trace} is satisfied for all embeddings into $X=m+n=\{0,\dots,m+n-1\}$.
\end{lemma}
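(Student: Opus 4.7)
The plan is to reduce an arbitrary pair of embeddings $f\colon m\to X$ and $g\colon n\to X$ to embeddings into $m+n$ by factoring through the combined image.

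First, I would set $a:=\rng(f)\cup\rng(g)\subseteq X$, viewed as a suborder of~$X$. Then $|a|\leq m+n$, and we obtain factorizations $f=\iota_a^X\circ f'$ and $g=\iota_a^X\circ g'$ for some embeddings $f'\colon m\to a$ and $g'\colon n\to a$. By functoriality we have
\begin{equation*}
D(f)(\sigma)=D(\iota_a^X)\circ D(f')(\sigma)\quad\text{and}\quad D(g)(\tau)=D(\iota_a^X)\circ D(g')(\tau).
\end{equation*}
Since $D(\iota_a^X)$ is a morphism in~$\lo$ and hence an order embedding, the strict inequality $D(f)(\sigma)<_{D(X)}D(g)(\tau)$ is equivalent to $D(f')(\sigma)<_{D(a)}D(g')(\tau)$. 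Thus it suffices to prove the latter.

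Now pick any embedding $h\colon a\to m+n$, which exists because $|a|\leq m+n$ and both orders are finite. Composing yields embeddings $h\circ f'\colon m\to m+n$ and $h\circ g'\colon n\to m+n$, so by hypothesis
\begin{equation*}
D(h\circ f')(\sigma)<_{D(m+n)}D(h\circ g')(\tau).
\end{equation*}
Using functoriality once more, this inequality reads $D(h)\circ D(f')(\sigma)<_{D(m+n)}D(h)\circ D(g')(\tau)$, and since $D(h)$ is an order embedding we may cancel it to obtain $D(f')(\sigma)<_{D(a)}D(g')(\tau)$. Combining with the earlier equivalence gives the desired inequality $D(f)(\sigma)<_{D(X)}D(g)(\tau)$.

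There is no real obstacle here: the argument is a straightforward factor-and-transport using only that $D$ is a functor into $\lo$ (so that $D$ of an embedding is an embedding, hence an order-reflecting map). The only mild subtlety is the passage from $a$ (a finite suborder of $X$, possibly not of the form $\{0,\dots,k-1\}$) to~$m+n$, which is handled by any order-preserving injection $h$; one does not need $h$ to be canonical, since the hypothesis is quantified over all embeddings into $m+n$.
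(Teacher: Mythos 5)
Your proof is correct and follows essentially the same route as the paper's: factor both embeddings through the finite union of their ranges, map that into $m+n$, and use that $D$ sends embeddings to order embeddings (hence order-reflecting maps) to transport the inequality back. The only cosmetic difference is that you apply $D$ to the suborder $a\subseteq X$ directly, whereas the paper routes everything through the canonical finite order $|a\cup b|\in\lo_0$ via the enumerations $\en_a$, $\en_b$; this matters only if one insists on reading $D$ as a coded predilator defined solely on $\lo_0$.
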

\begin{proof}
Consider embeddings $f:m\to X$ and $g:n\to X$ into an arbitrary order~$X$. For $a=\rng(f)$ we have $f=\iota_a^X\circ\en_a$, as both functions are increasing and have the same finite range. Similarly, we have $g=\iota_b^X\circ\en_b$ for $b=\rng(g)$. Pick an embedding $h:|a\cup b|\to m+n$. The assumption of the present lemma yields
\begin{equation*}
D(h\circ|\iota_a^{a\cup b}|)(\sigma)<_{D(m+n)}D(h\circ|\iota_b^{a\cup b}|)(\tau).
\end{equation*}
This implies $D(|\iota_a^{a\cup b}|)(\sigma)<_{D(|a\cup b|)}D(|\iota_b^{a\cup b}|)(\tau)$ and then
\begin{multline*}
D(f)(\sigma)=D(\iota_{a\cup b}^X\circ\iota_a^{a\cup b}\circ\en_a)(\sigma)=D(\iota_{a\cup b}^X\circ\en_{a\cup b}\circ|\iota_a^{a\cup b}|)(\sigma)<_{D(X)}{}\\
{}<_{D(X)}D(\iota_{a\cup b}^X\circ\en_{a\cup b}\circ|\iota_b^{a\cup b}|)(\tau)=D(\iota_{a\cup b}^X\circ\iota_b^{a\cup b}\circ\en_b)(\tau)=D(g)(\tau),
\end{multline*}
as required by the full condition from Definition~\ref{def:segments-trace}.
\end{proof}

Let us verify a basic property:

\begin{lemma}
The relation $\ll$ on $\tr(D)$ is irreflexive and transitive.
\end{lemma}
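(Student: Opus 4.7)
The plan is to verify both properties directly from the definition, with irreflexivity as a trivial case and transitivity requiring a standard trick of enlarging the target order.

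For irreflexivity, suppose $(m,\sigma)\ll(m,\sigma)$. Applying the defining clause with $f=g=\operatorname{id}_m:m\to m$ would give $D(\operatorname{id}_m)(\sigma)<_{D(m)}D(\operatorname{id}_m)(\sigma)$, i.e.~$\sigma<_{D(m)}\sigma$, which is impossible since $<_{D(m)}$ is a strict linear order. Hence no such trace element is related to itself.

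For transitivity, assume $(m,\sigma)\ll(n,\tau)$ and $(n,\tau)\ll(k,\rho)$, and consider arbitrary embeddings $f:m\to X$ and $g:k\to X$ into a linear order~$X$. The difficulty is that the hypotheses are phrased in terms of embeddings of $n$ into some target, while neither $f$ nor $g$ provides one, and $X$ may be too small (e.g.~finite) to admit any embedding $n\to X$. To get around this, I would enlarge $X$: define $X':=X\sqcup n$ with the order that places the extra copy of $n$ above all of~$X$, let $\iota:X\hookrightarrow X'$ be the inclusion of the first summand, and let $h:n\to X'$ be the embedding onto the second summand. Applying the two hypotheses inside $X'$ to the pairs $(\iota\circ f,h)$ and $(h,\iota\circ g)$ yields
\begin{equation*}
D(\iota\circ f)(\sigma)<_{D(X')}D(h)(\tau)<_{D(X')}D(\iota\circ g)(\rho).
\end{equation*}

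By functoriality, $D(\iota\circ f)=D(\iota)\circ D(f)$ and $D(\iota\circ g)=D(\iota)\circ D(g)$, so the composite inequality reads $D(\iota)(D(f)(\sigma))<_{D(X')}D(\iota)(D(g)(\rho))$; since $D(\iota)$ is an order embedding, this transfers back to $D(f)(\sigma)<_{D(X)}D(g)(\rho)$. As $f$ and $g$ were arbitrary, $(m,\sigma)\ll(k,\rho)$ follows. The only mildly delicate point is verifying that the chosen $X'$ is a linear order (immediate from the lexicographic definition) and that $\iota$ and $h$ are genuine embeddings (both clear); everything else is pure chasing of definitions, so no real obstacle is expected.
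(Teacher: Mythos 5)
Your proof is correct and follows essentially the same route as the paper: irreflexivity via the identity embeddings, and transitivity by passing to an enlarged order admitting an embedding of $n$ alongside $X$ and then pulling the inequality back along the order embedding $D(\iota)$. The only difference is cosmetic — the paper simply says ``pick an order $Y$ large enough to admit embeddings $h:X\to Y$ and $h':n\to Y$'', whereas you construct such an order explicitly as $X\sqcup n$.
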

\begin{proof}
To see $(m,\sigma)\not\ll(m,\sigma)$, it suffices to note that we have
\begin{equation*}
D(\iota_m^m)(\sigma)\not<_{D(m)}D(\iota_m^m)(\sigma),
\end{equation*}
since~$<_{D(m)}$ is irreflexive. For transitivity we consider inequalities $(m,\sigma)\ll(n,\tau)$ and $(n,\tau)\ll(k,\rho)$. Given embeddings $f:m\to X$ and $g:k\to X$, pick an order~$Y$ that is large enough to admit embeddings $h:X\to Y$ and $h':n\to Y$. We get
\begin{equation*}
D(h\circ f)(\sigma)<_{D(Y)} D(h')(\tau)<_{D(Y)} D(h\circ g)(\rho),
\end{equation*}
which implies $D(f)(\sigma)<_{D(X)} D(g)(\rho)$, as needed for $(m,\sigma)\ll(k,\rho)$.
\end{proof}

As promised, the relation $\ll$ on the trace can be used to characterize segments:

\begin{proposition}\label{prop:segment-trace}
For any morphism $\nu:D\To E$ between predilators~$D$ and~$E$, the following are equivalent:
\begin{enumerate}[label=(\roman*)]
\item the morphism $\nu:D\To E$ is a segment,
\item given $(m,\sigma)\in\rng(\nu)$ and $(m,\sigma)\not\ll(n,\tau)\in\tr(E)$, we get $(n,\tau)\in\rng(\nu)$.
\end{enumerate}
\end{proposition}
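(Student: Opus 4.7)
My plan is to prove both implications directly by unpacking normal-form representations of elements of $E(X)$, as obtained in the proof of Theorem~\ref{thm:class-coded-equiv}, and combining them with Lemma~\ref{lem:transfos-respect-supp}, which in particular ensures that $\tr(\nu)$ sends trace elements of $D$ to trace elements of $E$. Throughout, the relation $\ll$ from Definition~\ref{def:segments-trace} is read as the relation on $\tr(E)$.

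For (i) $\Rightarrow$ (ii), I will start from $(m,\sigma) = (m, \nu_m(\sigma_0)) \in \rng(\nu)$ and a hypothesis $(m,\sigma) \not\ll (n,\tau)$. By Definition~\ref{def:segments-trace}, this yields embeddings $f : m \to X$ and $g : n \to X$ with
\begin{equation*}
E(g)(\tau) \leq_{E(X)} E(f)(\sigma) = E(f) \circ \nu_m(\sigma_0) = \nu_X \circ D(f)(\sigma_0),
\end{equation*}
so the left-hand side lies in $\rng(\nu_X)$. Using that $\nu$ is a segment, I write $E(g)(\tau) = \nu_X(\rho)$ for some $\rho \in D(X)$. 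Lemma~\ref{lem:transfos-respect-supp} then forces
\begin{equation*}
\supp^D_X(\rho) = \supp^E_X(\nu_X(\rho)) = \supp^E_X(E(g)(\tau)) = [g]^{<\omega}(n) = \rng(g),
\end{equation*}
and the support condition of Definition~\ref{def:dilator} produces $\rho_0 \in D(n)$ with $\rho = D(g)(\rho_0)$. Injectivity of $E(g)$ gives $\tau = \nu_n(\rho_0)$, and a further application of Lemma~\ref{lem:transfos-respect-supp} shows $\supp^D_n(\rho_0) = \supp^E_n(\tau) = n$, so that $(n,\rho_0) \in \tr(D)$ and $(n,\tau) = \tr(\nu)(n,\rho_0) \in \rng(\nu)$.

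For (ii) $\Rightarrow$ (i), I will fix a linear order~$X$, an element $\sigma_0 \in D(X)$, and an inequality $\sigma' <_{E(X)} \nu_X(\sigma_0)$, and show $\sigma' \in \rng(\nu_X)$. Writing out normal forms $\sigma_0 \nf D(\iota_a^X \circ \en_a)(\sigma_1)$ and $\sigma' \nf E(\iota_b^X \circ \en_b)(\sigma'_1)$, naturality of $\nu$ gives $\nu_X(\sigma_0) = E(\iota_a^X \circ \en_a) \circ \nu_{|a|}(\sigma_1)$, while Lemma~\ref{lem:transfos-respect-supp} places $(|a|, \nu_{|a|}(\sigma_1)) = \tr(\nu)(|a|,\sigma_1)$ in $\rng(\nu)$. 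Taking $f = \iota_a^X \circ \en_a$ and $g = \iota_b^X \circ \en_b$ as witnesses with common codomain $X$, the inequality $\sigma' <_{E(X)} \nu_X(\sigma_0)$ falsifies the defining condition of $(|a|, \nu_{|a|}(\sigma_1)) \ll (|b|, \sigma'_1)$ at this choice of $f,g$; hypothesis (ii) then yields $(|b|, \sigma'_1) \in \rng(\nu)$, say $\sigma'_1 = \nu_{|b|}(\rho_1)$, and naturality concludes $\sigma' = \nu_X(D(\iota_b^X \circ \en_b)(\rho_1)) \in \rng(\nu_X)$.

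No step is a genuine obstacle: the only recurring subtlety is verifying that the element extracted from the support condition actually lies in the trace, and this is precisely what Lemma~\ref{lem:transfos-respect-supp} delivers. Morally, the proposition says that $\nu$ is a segment exactly when $\rng(\nu) \subseteq \tr(E)$ is downward closed with respect to $\ll$, in the weak sense appropriate to the absence of linearity of $\ll$.
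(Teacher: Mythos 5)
Your proposal is correct and follows essentially the same route as the paper's proof: both directions proceed by extracting witnesses from Definition~\ref{def:segments-trace}, using the segment property to pull $E(g)(\tau)$ back along $\nu_X$, and invoking Lemma~\ref{lem:transfos-respect-supp} together with the support condition to land in the trace. The only (harmless) difference is in (i)$\Rightarrow$(ii), where you apply the support condition directly to $g$ after computing $\supp^D_X(\rho)=\rng(g)$, whereas the paper first writes $\rho$ in normal form and then identifies the support set with $\rng(g)$.
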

\begin{proof}
To show that~(i) implies~(ii), we assume $(m,\sigma)\in\rng(\nu)$ and $(m,\sigma)\not\ll(n,\tau)$. The former is witnessed by an element $\sigma_0\in D(m)$ with $\sigma=\nu_m(\sigma_0)$, while the latter yields embeddings $f:m\to X$ and $g:n\to X$ with
\begin{equation*}
E(g)(\tau)\leq_{E(X)} E(f)(\sigma)=E(f)\circ\nu_m(\sigma_0)=\nu_X\circ D(f)(\sigma_0)\in\rng(\nu_X).
\end{equation*}
Given that $\nu$ is a segment, we obtain $E(g)(\tau)=\nu_X(\rho)$ for some element $\rho\in D(X)$. Write $\rho\nf D(\iota_a^X\circ\en_a)(\rho_0)$ and observe
\begin{equation*}
E(g)(\tau)=\nu_X\circ D(\iota_a^X\circ\en_a)(\rho_0)=E(\iota_a^X\circ\en_a)\circ\nu_{|a|}(\rho_0).
\end{equation*}
Recall that $(n,\tau)\in\tr(E)$ entails $\supp^E_n(\tau)=n$. Together with Lemma~\ref{lem:transfos-respect-supp}, this means that $(|a|,\rho_0)\in\tr(D)$ yields $\supp^E_{|a|}\circ\nu_{|a|}(\rho_0)=\supp^D_{|a|}(\rho_0)=|a|$. We obtain
\begin{multline*}
[g]^{<\omega}(n)=\supp^E_X(E(g)(\tau))=\supp^E_X(\nu_X\circ D(\iota_a^X\circ\en_a)(\rho_0))=\\
=\supp^E_X(E(\iota_a^X\circ\en_a)\circ\nu_{|a|}(\rho_0))=[\iota_a^X\circ\en_a]^{<\omega}(\supp^E_{|a|}\circ\nu_{|a|}(\rho_0))=a,
\end{multline*}
so that $n=|a|$ and $g=\iota_a^X\circ\en_a$. By the above we get $\tau=\nu_{|a|}(\rho_0)$ and hence
\begin{equation*}
(n,\tau)=(|a|,\nu_{|a|}(\rho_0))=\tr(\nu)((|a|,\rho_0))\in\rng(\nu).
\end{equation*}
To show that~(ii) implies~(i), we consider $E(X)\ni\tau<_{E(X)}\nu_X(\sigma)$ with $\sigma\in D(X)$. Let us write $\tau\nf E(\iota_b^X\circ\en_b)(\tau_0)$ and $\sigma\nf D(\iota_a^X\circ\en_a)(\sigma_0)$. Then
\begin{equation*}
E(\iota_a^X\circ\en_a)\circ\nu_{|a|}(\sigma_0)=\nu_X(\sigma)\not<_{E(X)}\tau=E(\iota_b^X\circ\en_b)(\tau_0)
\end{equation*}
witnesses that we have
\begin{equation*}
\rng(\nu)\ni\tr(\nu)((|a|,\sigma_0))=(|a|,\nu_{|a|}(\sigma_0))\not\ll(|b|,\tau_0).
\end{equation*}
By~(ii) we get $(|b|,\tau_0)\in\rng(\nu)$, say $\tau_0=\nu_{|b|}(\tau_1)$ with $\tau_1\in D(|b|)$. This yields
\begin{equation*}
\tau=E(\iota_b^X\circ\en_b)\circ\nu_{|b|}(\tau_1)=\nu_X\circ D(\iota_b^X\circ\en_b)(\tau_1)\in\rng(\nu_X),
\end{equation*}
as needed to show that $\nu$ is a segment.
\end{proof}

The following result implies that incomparability under $\ll$ is an equivalence relation that is compatible with~$\ll$. Hence one can linearize $\ll$ by identifying incomparable elements.

\begin{lemma}\label{lem:linearize-ll}
Assume that we have $(m,\sigma)\ll (n,\tau)$ in $\tr(D)$.
\begin{enumerate}[label=(\alph*)]
\item If we have $(m,\sigma)\not\ll(m',\sigma')$, then we have $(m',\sigma')\ll (n,\tau)$.
\item If we have $(n',\tau')\not\ll(n,\tau)$, then we have $(m,\sigma)\ll (n',\tau')$.
\end{enumerate}
\end{lemma}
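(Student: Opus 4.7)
My plan is to prove both parts by the same trick. Given witnesses for the $\not\ll$ hypothesis living in some order $X_0$, I will form a concatenation $X$ of $X_0$ and the order $Y$ that comes from the goal, so that in $X$ two embeddings of the same finite order---one into $Y$, one into $X_0$---become pointwise comparable. Monotonicity of $D$ then promotes this pointwise comparison to an inequality in $D(X)$, and chaining with the witness inequality for $\not\ll$ and with the $\ll$ hypothesis produces the required strict inequality in $D(Y)$.

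In detail for part~(a): I fix arbitrary embeddings $f':m'\to Y$ and $g:n\to Y$ and aim to show $D(f')(\sigma')<_{D(Y)}D(g)(\tau)$. From $(m,\sigma)\not\ll(m',\sigma')$ I extract $h_1:m\to X_0$ and $h_2:m'\to X_0$ with $D(h_2)(\sigma')\leq D(h_1)(\sigma)$ in $D(X_0)$. I then set $X:=Y+X_0$ and write $\iota:Y\hookrightarrow X$ and $j:X_0\hookrightarrow X$ for the inclusions. Because every element of $Y$ sits below every element of $X_0$ in $X$, the two embeddings $\iota\circ f',\,j\circ h_2:m'\to X$ satisfy $\iota\circ f'\leq j\circ h_2$ pointwise, so monotonicity of $D$ gives $D(\iota\circ f')(\sigma')\leq D(j\circ h_2)(\sigma')$ in $D(X)$. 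Applying $D(j)$ to the witness inequality yields $D(j\circ h_2)(\sigma')\leq D(j\circ h_1)(\sigma)$, and $(m,\sigma)\ll(n,\tau)$ applied to $j\circ h_1$ and $\iota\circ g$ gives $D(j\circ h_1)(\sigma)<D(\iota\circ g)(\tau)$. Chaining these three estimates produces $D(\iota)D(f')(\sigma')<D(\iota)D(g)(\tau)$, and since $D(\iota)$ is an order embedding the required conclusion $D(f')(\sigma')<_{D(Y)}D(g)(\tau)$ follows. Part~(b) is obtained by reversing the concatenation: I take $X:=X_0+Y$ and witnesses $k_1:n'\to X_0$, $k_2:n\to X_0$ of $(n',\tau')\not\ll(n,\tau)$, so that monotonicity of $D$ now yields $D(j\circ k_1)(\tau')\leq D(\iota\circ g')(\tau')$, and the chain
\begin{equation*}
D(\iota\circ f)(\sigma)<D(j\circ k_2)(\tau)\leq D(j\circ k_1)(\tau')\leq D(\iota\circ g')(\tau')
\end{equation*}
provides the desired strict inequality $D(f)(\sigma)<_{D(Y)}D(g')(\tau')$.

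The main obstacle---indeed the only non-routine point---is to notice that the critical step compares $D$ applied to two \emph{different} embeddings of the same finite order, so that one really needs the monotonicity clause of Definition~\ref{def:dilator}. This is precisely the use that the author flags in the remarks following that definition. Without monotonicity one would be forced to build $X$ so that the joint order-pattern of $(f',\text{some auxiliary embedding})$ matches that of $(h_1,h_2)$ on their combined ranges and then transport the inequality through an isomorphism of finite suborders---a workable but strictly more delicate argument than the concatenation trick.
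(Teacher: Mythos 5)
Your argument is correct and is essentially the paper's: both rest on concatenating the two relevant linear orders so that one embedding of the same finite order pointwise dominates another, then invoking the monotonicity clause of Definition~\ref{def:dilator} and chaining inequalities. The only (cosmetic) differences are that the paper proves~(a) in contrapositive form and obtains~(b) as the contrapositive of~(a), whereas you argue both parts directly; the key step and the role of monotonicity are identical.
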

\begin{proof}
First note that the contrapositive of~(b) is an instance of~(a). In order to establish the latter, we deduce $(m,\sigma)\not\ll (n,\tau)$ from the assumption that we have $(m,\sigma)\not\ll(m',\sigma')$ and $(m',\sigma')\not\ll (n,\tau)$. This assumption yields embeddings $f:m\to X$, $g:m'\to X$, $f':m'\to Y$ and $g':n\to Y$ with
\begin{equation*}
D(g)(\sigma')\leq_{D(X)} D(f)(\sigma)\quad\text{and}\quad D(g')(\tau)\leq_{D(Y)} D(f')(\sigma').
\end{equation*}
Consider a linear order~$Z$ that is large enough to admit embeddings $h:X\to Z$ and $h':Y\to Z$ with $h'(y)\leq_Z h(x)$ for all $x\in X$ and $y\in Y$. This yields $h'\circ f'\leq h\circ g$, in the notation from the beginning of Section~\ref{sect:cat-dilators}. Since the predilator~$D$ is monotone (cf.~Definition~\ref{def:dilator}), we obtain
\begin{equation*}
D(h'\circ g')(\tau)\leq_{D(Z)} D(h'\circ f')(\sigma')\leq_{D(Z)} D(h\circ g)(\sigma')\leq_{D(Z)} D(h\circ f)(\sigma),
\end{equation*}
as needed to witness $(m,\sigma)\not\ll (n,\tau)$.
\end{proof}

For technical reasons, we will not identify incomparable elements. Instead, we linearize $\ll$ as follows (cf.~\cite[Lemma~2.11]{girard-normann85}): Each order $n=|n|=\{0,\dots,n-1\}$ can be identified with a suborder of~$\omega$. In view of Definition~\ref{def:extend-coded-dil}, this identification yields $\tr(D)\subseteq\overline D(\omega)$, with $D\!\restriction\!\lo_0$ at the place of~$D$ when the latter is class-sized. For elements $(m,\sigma)$ and $(n,\tau)$ of $\tr(D)$ we set
\begin{equation*}
(m,\sigma)<_{\tr(D)}(n,\tau)\quad:\Leftrightarrow\quad (m,\sigma)<_{\overline D(\omega)} (n,\tau).
\end{equation*}
Let us observe the following:

\begin{lemma}\label{lem:linearize-ll}
The relation $<_{\tr(D)}$ is a linear order. It is well founded if the same holds for $<_{\overline D(\omega)}$. Furthermore, we have
\begin{equation*}
(m,\sigma)\ll(n,\tau)\quad\Rightarrow\quad (m,\sigma)<_{\tr(D)}(n,\tau)
\end{equation*}
for any elements $(m,\sigma)$ and $(n,\tau)$ of $\tr(D)$.
\end{lemma}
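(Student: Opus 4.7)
The plan is to derive all three claims from the single observation that $<_{\tr(D)}$ is by construction the restriction of the order $<_{\overline D(\omega)}$ to the subset $\tr(D)\subseteq\overline D(\omega)$. The inclusion itself is well defined: any $(m,\sigma)\in\tr(D)$ satisfies $m=\{0,\dots,m-1\}\in[\omega]^{<\omega}$ and $(|m|,\sigma)=(m,\sigma)\in\tr(D)$, so it lies in $\overline D(\omega)$ as constructed in Definition~\ref{def:extend-coded-dil}. By Proposition~\ref{prop:coded-to-class-predil}, $\overline D$ is a class-sized predilator, hence $\overline D(\omega)$ is a linear order; a restriction of a linear order to a subset is a linear order, which gives the first claim. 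Any descending sequence in $<_{\tr(D)}$ is verbatim a descending sequence in $<_{\overline D(\omega)}$, so well-foundedness of the latter transfers to the former, giving the second claim.

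For the implication $(m,\sigma)\ll(n,\tau)\Rightarrow(m,\sigma)<_{\tr(D)}(n,\tau)$, I will simply unfold the two definitions and observe that they match on a particular choice of embeddings. Put $k:=\max(m,n)$ and regard $m,n\subseteq k\subseteq\omega$, so that $m\cup n=k$ as a subset of $\omega$. Because each of $m$, $n$, $k$ already has the form $\{0,\dots,r-1\}$, the enumeration maps $\en_m,\en_n,\en_k$ are identities, and the lifts $|\iota_m^k|,|\iota_n^k|$ coincide with the set-theoretic inclusions under the canonical identification of $|m|,|n|,|k|$ with $m,n,k$. Applying Definition~\ref{def:segments-trace} with $X:=k$ and embeddings $f:=|\iota_m^k|$ and $g:=|\iota_n^k|$ produces
\begin{equation*}
D(|\iota_m^k|)(\sigma)<_{D(k)}D(|\iota_n^k|)(\tau),
\end{equation*}
which is literally the defining inequality of $(m,\sigma)<_{\overline D(\omega)}(n,\tau)$ from Definition~\ref{def:extend-coded-dil}, with $a:=m$, $b:=n$, and $a\cup b=k$.

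I do not anticipate a genuine obstacle: the lemma is essentially a bookkeeping exercise that confirms $<_{\overline D(\omega)}$ specializes correctly on the trace and majorizes $\ll$. The only point worth taking care with is the identification of the natural number $m$ with the subset $\{0,\dots,m-1\}\subseteq\omega$, which is already built into the author's notational convention (and, via $|m|=m$, into Definition~\ref{def:extend-coded-dil}).
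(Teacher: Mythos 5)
Your proof is correct and follows essentially the same route as the paper: both reduce the first two claims to the fact that $<_{\tr(D)}$ is the restriction of the linear order $<_{\overline D(\omega)}$ (using Proposition~\ref{prop:coded-to-class-predil}), and both obtain the implication by instantiating Definition~\ref{def:segments-trace} at $X=m\cup n=\max(m,n)$ with the inclusions $|\iota_m^{m\cup n}|$ and $|\iota_n^{m\cup n}|$, which reproduces the defining inequality of $<_{\overline D(\omega)}$ from Definition~\ref{def:extend-coded-dil}. The paper even makes the same closing remark that the notation collapses because $|\iota_m^{m\cup n}|$ is just the set-theoretic inclusion.
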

\begin{proof}
Proposition~\ref{prop:coded-to-class-predil} entails that $\overline D(\omega)$ is a linear order, which yields the first part of the claim. The definition of $\ll$ reveals that $(m,\sigma)\ll(n,\tau)$ implies
\begin{equation*}
D(|\iota_m^{m\cup n}|)(\sigma)<_{D(|m\cup n|)} D(|\iota_n^{m\cup n}|)(\tau).
\end{equation*}
By Definition~\ref{def:extend-coded-dil} this amounts to $(m,\sigma)<_{\overline D(\omega)} (n,\tau)$, as required for the last part of the lemma. To avoid confusion, we point out that our general notation is unnecessarily complex for the present situation: the function $|\iota_m^{m\cup n}|=\iota_m^{m\cup n}$ is simply the inclusion of $m=\{0,\dots,m-1\}$ into $m\cup n=\max(m,n)=\{0,\dots,\max(m,n)-1\}$.
\end{proof}

As the following shows, each ``proper" segment $\nu:D\To E$ is determined (cf.~Pro\-po\-si\-tion~\ref{prop:trace-inclusion-to-morphism}) by an element~$\rho=(k,\rho_0)\in\tr(E)$. Usually, $\rho$ is not unique: if $\rho$ and $\rho'$ are incomparable, then they determine the same subset of~$\tr(E)$, by Lemma~\ref{lem:linearize-ll}.

\begin{corollary}
Consider a morphism $\nu:D\To E$ of predilators and assume that $\overline E(\omega)$ is well founded. If we have $\rng(\nu)\neq\tr(E)$, then the following are equivalent:
\begin{enumerate}[label=(\roman*)]
\item the morphism $\nu$ is a segment,
\item we have $\rng(\nu)=\{\sigma\in\tr(E)\,|\,\sigma\ll\rho\}$ for some $\rho\in\tr(E)$.
\end{enumerate}
\end{corollary}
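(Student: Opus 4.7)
The plan is to use the well-foundedness of $\overline E(\omega)$ to select $\rho$ as a minimal element of $\tr(E)\setminus\rng(\nu)$, and then to rely on Proposition~\ref{prop:segment-trace} together with Lemma~\ref{lem:linearize-ll} to relate membership in $\rng(\nu)$ to the order $\ll$. Note first that since $\tr(E)\subseteq\overline E(\omega)$, any descending sequence with respect to $<_{\tr(E)}$ would yield a descending sequence in $\overline E(\omega)$; hence $<_{\tr(E)}$ inherits well-foundedness.

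For the implication (ii)$\Rightarrow$(i), I would verify condition (ii) of Proposition~\ref{prop:segment-trace}. Assume $\rng(\nu)=\{\sigma\in\tr(E)\mid\sigma\ll\rho\}$ for some $\rho\in\tr(E)$, and let $(m,\sigma)\in\rng(\nu)$ and $(n,\tau)\in\tr(E)$ with $(m,\sigma)\not\ll(n,\tau)$. From $(m,\sigma)\ll\rho$ combined with $(m,\sigma)\not\ll(n,\tau)$, part~(a) of Lemma~\ref{lem:linearize-ll} yields $(n,\tau)\ll\rho$, so $(n,\tau)\in\rng(\nu)$.

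For (i)$\Rightarrow$(ii), since $\rng(\nu)\neq\tr(E)$, the complement $\tr(E)\setminus\rng(\nu)$ is nonempty, and by well-foundedness of $<_{\tr(E)}$ I pick its $<_{\tr(E)}$-minimal element $\rho$. For the inclusion $\rng(\nu)\subseteq\{\sigma\mid\sigma\ll\rho\}$: if $(m,\sigma)\in\rng(\nu)$ satisfied $(m,\sigma)\not\ll\rho$, then Proposition~\ref{prop:segment-trace} (applied with $(n,\tau):=\rho$) would force $\rho\in\rng(\nu)$, contradicting the choice of~$\rho$. For the reverse inclusion: if $(n,\tau)\in\tr(E)$ satisfies $(n,\tau)\ll\rho$, then by the last part of Lemma~\ref{lem:linearize-ll} we get $(n,\tau)<_{\tr(E)}\rho$, so by the minimality of $\rho$ we conclude $(n,\tau)\in\rng(\nu)$.

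The argument is essentially routine once one has the two tools; the only subtlety I anticipate is bookkeeping around the fact that $\ll$ is not itself well-founded (or even linear) in general, so the selection of $\rho$ must genuinely be performed with respect to the linearisation $<_{\tr(E)}$, and the transition between the two relations relies crucially on the final implication of Lemma~\ref{lem:linearize-ll}. The hypothesis $\rng(\nu)\neq\tr(E)$ is of course essential, as the case $\rng(\nu)=\tr(E)$ cannot be captured by an element $\rho\in\tr(E)$ in the form stated.
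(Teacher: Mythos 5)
Your proposal is correct and follows essentially the same route as the paper: choose $\rho$ minimal in $\tr(E)\setminus\rng(\nu)$ using the well-founded linearisation $<_{\tr(E)}$, and shuttle between $\rng(\nu)$ and $\ll$ via Proposition~\ref{prop:segment-trace} and Lemma~\ref{lem:linearize-ll}. Your explicit remark that the minimal element must be selected with respect to $<_{\tr(E)}$ rather than $\ll$ is exactly the point the paper handles by citing ``the previous lemma''.
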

\begin{proof}
In view of $\rng(\nu)\neq\tr(E)$ and the previous lemma, we may pick a $\rho\notin\rng(\nu)$ that is $\ll$-minimal. Assuming~(i), we show that any such~$\rho$ witnesses~(ii). To see that $\sigma\ll\rho$ implies $\sigma\in\rng(\nu)$, it suffices to invoke the minimality of~$\rho$. Now assume $\sigma\in\rng(\nu)$. If we had $\sigma\not\ll\rho$, then Proposition~\ref{prop:segment-trace} would yield $\rho\in\rng(\nu)$, against the choice of~$\rho$. Hence we must have $\sigma\ll\rho$. Let us now assume that~(ii) holds for some given $\rho\in\tr(E)$. By Lemma~\ref{lem:linearize-ll}, it follows that $\sigma\in\rng(\nu)$ and $\sigma\not\ll\tau$ imply $\tau\in\rng(\nu)$. Then Proposition~\ref{prop:segment-trace} yields~(i).
\end{proof}

According to Definition~\ref{def:trace-morphism}, each morphism $\nu:D\To E$ of predilators yields a function $\tr(\nu):\tr(D)\to\tr(E)$. We will need the following:

\begin{lemma}
Consider a morphism $\nu:D\To E$. If we have $\sigma<_{\tr(D)}\tau$, then we have $\tr(\nu)(\sigma)<_{\tr(E)}\tr(\nu)(\tau)$.
\end{lemma}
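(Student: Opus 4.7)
The plan is to unfold the definition of $<_{\tr(D)}$ through $\overline D(\omega)$ and Definition~\ref{def:extend-coded-dil}, then reduce the statement to a direct consequence of the naturality of~$\nu$ and the fact that its components are order embeddings.

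Fix $\sigma = (m,\sigma_0)$ and $\tau = (n,\tau_0)$ in $\tr(D)$, and write $k := \max(m,n)$, so that $m\cup n = k$ as suborders of~$\omega$. By the definition of $<_{\tr(D)}$ together with Definition~\ref{def:extend-coded-dil}, the hypothesis $\sigma <_{\tr(D)} \tau$ unfolds to
\begin{equation*}
D(\iota_m^k)(\sigma_0) <_{D(k)} D(\iota_n^k)(\tau_0),
\end{equation*}
where we have used that $|\iota_m^k|=\iota_m^k$ and analogously for $n$, as noted at the end of the proof of Lemma~\ref{lem:linearize-ll}. I want to derive the analogous inequality with $E$ in place of~$D$ and with $\nu_m(\sigma_0)$, $\nu_n(\tau_0)$ in place of $\sigma_0,\tau_0$.

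For this I would apply naturality of $\nu:D\To E$ to the morphisms $\iota_m^k$ and $\iota_n^k$, which gives
\begin{equation*}
E(\iota_m^k)(\nu_m(\sigma_0)) = \nu_k(D(\iota_m^k)(\sigma_0)), \qquad
E(\iota_n^k)(\nu_n(\tau_0)) = \nu_k(D(\iota_n^k)(\tau_0)).
\end{equation*}
Since $\nu_k:D(k)\to E(k)$ is a morphism in~$\lo$, hence an order embedding, the inequality displayed above is preserved, yielding
\begin{equation*}
E(\iota_m^k)(\nu_m(\sigma_0)) <_{E(k)} E(\iota_n^k)(\nu_n(\tau_0)).
\end{equation*}
Unfolding this via Definition~\ref{def:extend-coded-dil} gives $(m,\nu_m(\sigma_0)) <_{\overline E(\omega)} (n,\nu_n(\tau_0))$, which by definition is $\tr(\nu)(\sigma) <_{\tr(E)} \tr(\nu)(\tau)$, as required. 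No step looks like a genuine obstacle; the entire argument is a short unfolding followed by one application of naturality, so the only point requiring any care is the bookkeeping of the identifications $|\iota_m^k| = \iota_m^k$ made at the end of Lemma~\ref{lem:linearize-ll}.
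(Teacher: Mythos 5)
Your proof is correct, but it takes a slightly different route from the paper's. The paper simply observes that, under the identification of $\tr(D)$ with a subset of $\overline D(\omega)$, the function $\tr(\nu)$ coincides with the restriction of $\overline\nu_\omega:\overline D(\omega)\to\overline E(\omega)$ from Definition~\ref{def:morphs-dils-extend}, and then cites Lemma~\ref{lem:morphism-dils-extend} (whose proof lives in an external reference) to conclude that $\overline\nu_\omega$, and hence $\tr(\nu)$, is order preserving. You instead unfold the definition of $<_{\overline D(\omega)}$ via Definition~\ref{def:extend-coded-dil} and re-derive the required instance from scratch, using naturality of $\nu$ at $\iota_m^k$ and $\iota_n^k$ together with the fact that $\nu_k$ is an embedding; this is in effect the proof of the relevant case of Lemma~\ref{lem:morphism-dils-extend} specialized to $X=\omega$ and to trace elements. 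The paper's version is shorter because it reuses existing machinery; yours is self-contained and makes the mechanism visible. Both are complete — the only implicit point in your argument, that $(m,\nu_m(\sigma_0))$ and $(n,\nu_n(\tau_0))$ again lie in the trace of $E$ so that the comparison in $\overline E(\omega)$ makes sense, is guaranteed by Lemma~\ref{lem:transfos-respect-supp} via Definition~\ref{def:trace-morphism}, exactly as you tacitly assume.
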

\begin{proof}
In defining $<_{\tr(D)}$, we have identified $\tr(D)$ with a subset of~$\overline D(\omega)$. Under this identification, the value $\tr(\nu)(n,\sigma_0)=(n,\nu_n(\sigma_0))$ from Definition~\ref{def:trace-morphism} coincides with the value $\overline\nu_\omega(n,\sigma_0)=(n,\nu_n(\sigma_0))$ from Definition~\ref{def:morphs-dils-extend}. From Lemma~\ref{lem:morphism-dils-extend} we know that the function $\overline\nu_\omega:\overline D(\omega)\to\overline E(\omega)$ is order preserving. Hence the same holds for the function $\tr(\nu)$.
\end{proof}

We now have all ingredients to make the definition of $P^*$ official (cf.~the discussion before Definition~\ref{def:segments-trace}). For $\rho\in\tr(D)$ we abbreviate
\begin{equation*}
D[\rho]:=D[\{\sigma\in\tr(D)\,|\,\sigma\ll\rho\}],
\end{equation*}
where the right side is explained by Definition~\ref{def:D[A]}. Together with the notation from Example~\ref{ex:ptyx-succ}, this allows us to describe the action of $P^*$ on predilators:

\begin{definition}\label{def:P*-objects}
Consider a $2$-preptyx~$P$. For each predilator~$D$ and each order~$X$, we consider the set
\begin{equation*}
P^*(D)(X):=\sum_{\rho\in\tr(D)}(P(D[\rho])+1)(X),
\end{equation*}
which has elements $(\rho,\sigma)$ with $\rho\in\tr(D)$ and $\sigma\in P(D[\rho])(X)\cup\{\top\}$. We set
\begin{equation*}
(\rho,\sigma)<_{P^*(D)(X)}(\rho',\sigma')\quad:\Leftrightarrow\quad \rho<_{\tr(D)}\rho'\text{ or }(\rho=\rho'\text{ and }\sigma<_{P(D[\rho])(X)+1}\sigma').
\end{equation*}
Given an embedding $f:X\to Y$, we define $P^*(D)(f):P^*(D)(X)\to P^*(D)(Y)$ by
\begin{equation*}
P^*(D)(f)(\rho,\sigma):=(\rho,(P(D[\rho])+1)(f)(\sigma)).
\end{equation*}
Finally, we set
\begin{equation*}
\supp^{P^*(D)}_X(\rho,\sigma):=\supp^{P(D[\rho])+1}_X(\sigma)
\end{equation*}
to define a family of functions $\supp^{P^*(D)}_X:P^*(D)(X)\to[X]^{<\omega}$.
\end{definition}

By the discussion after Definition~\ref{def:ptyx}, the preptyx~$P$ comes with $\Delta^0_1$-definitions of $\sigma\in P(D)(X)$ and other relevant relations. These definitions refer to the coded predilator~$D\subseteq\mathbb N$ as a set parameter: they involve subformulas like $\tau\in D(n)$, or rather $(0,n,\sigma)\in D$, in view of the discussion after Definition~\ref{def:coded-predil}. As we have observed after Definition~\ref{def:D[A]}, the relation $\tau\in D[A](X)$ is $\Delta^0_1$-definable relative to $D\subseteq\mathbb N$ and $A\subseteq\tr(D)$. In order to turn the given definition of~$\sigma\in P(D)$ into a $\Delta^0_1$-definition of $\sigma\in P(D[\rho])$, it suffices to replace subformulas like $\tau\in D(n)$ by corresponding $\Delta^0_1$-formulas like $\tau\in D[\rho](n)$. Once we have definitions for $P(D[\rho])$, it is straightforward to construct $\Delta^0_1$-definitions of $(\rho,\sigma)\in P^*(D)(X)$ and the other relations that constitute~$P^*(D)$. The following results will allow us to define the action of~$P^*$ on morphisms of predilators.

\begin{lemma}\label{lem:tr-equiv-ll}
Consider a morphism $\nu:D\To E$. We have $\sigma\ll\rho$ in $\tr(D)$ if, and only if, we have $\tr(\nu)(\sigma)\ll\tr(\nu)(\rho)$ in $\tr(E)$.
\end{lemma}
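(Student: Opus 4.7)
The plan is to reduce both directions to naturality of $\nu$ together with the fact that each component $\nu_X$ is an order embedding (hence strictly increasing and, being a morphism between linear orders, order-reflecting).

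Write $\sigma = (m,\sigma_0)$ and $\rho = (n,\rho_0)$, so that $\tr(\nu)(\sigma) = (m,\nu_m(\sigma_0))$ and $\tr(\nu)(\rho) = (n,\nu_n(\rho_0))$. For any embeddings $f:m\to X$ and $g:n\to X$, naturality of $\nu$ gives
\begin{equation*}
E(f)(\nu_m(\sigma_0)) = \nu_X(D(f)(\sigma_0))
\quad\text{and}\quad
E(g)(\nu_n(\rho_0)) = \nu_X(D(g)(\rho_0)).
\end{equation*}

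For the forward direction, suppose $\sigma \ll \rho$. Then for all such $f,g$ we have $D(f)(\sigma_0) <_{D(X)} D(g)(\rho_0)$. Applying the order embedding $\nu_X$ preserves this inequality, and combined with the naturality equations above we obtain $E(f)(\nu_m(\sigma_0)) <_{E(X)} E(g)(\nu_n(\rho_0))$, which witnesses $\tr(\nu)(\sigma) \ll \tr(\nu)(\rho)$. The converse is the same computation read backwards: if $E(f)(\nu_m(\sigma_0)) <_{E(X)} E(g)(\nu_n(\rho_0))$ for all $f,g$, then by naturality $\nu_X(D(f)(\sigma_0)) <_{E(X)} \nu_X(D(g)(\rho_0))$, and because $\nu_X$ is an order embedding between linear orders it reflects the strict order, yielding $D(f)(\sigma_0) <_{D(X)} D(g)(\rho_0)$ as required for $\sigma \ll \rho$.

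There is no real obstacle here; the only subtlety worth flagging is that one needs the codomain to be linearly ordered in order to conclude that the injective, strictly increasing map $\nu_X$ also reflects $<$, but this is part of the standing assumption that predilators take values in $\lo$.
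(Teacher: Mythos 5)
Your proof is correct and follows essentially the same route as the paper: both reduce the claim to the observation that, by naturality, $E(f)\circ\nu_m(\sigma_0)=\nu_X\circ D(f)(\sigma_0)$ and likewise for $\rho_0$, so the defining inequality for $\ll$ in $\tr(E)$ is equivalent to the one in $\tr(D)$ because the order embedding $\nu_X$ between linear orders both preserves and reflects the strict order. The paper states this equivalence in one line where you spell out the two directions, but the content is identical.
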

\begin{proof}
Write $\sigma=(m,\sigma_0)$ and $\rho=(n,\rho_0)$, which yields $\tr(\nu)(\sigma)=(m,\nu_m(\sigma_0))$ and $\tr(\nu)(\rho)=(n,\nu_n(\rho_0))$. In view of Definition~\ref{def:segments-trace}, it suffices to observe that the inequality $D(f)(\sigma_0)<_{D(X)} D(g)(\rho_0)$ is equivalent to
\begin{equation*}
E(f)\circ\nu_m(\sigma_0)=\nu_X\circ D(f)(\sigma_0)<_{E(X)}\nu_X\circ D(g)(\rho_0)=E(g)\circ\nu_n(\rho_0),
\end{equation*}
for arbitrary embeddings $f:m\to X$ and $g:n\to X$.
\end{proof}

For each element $\rho\in\tr(D)$, Lemma~\ref{lem:iota[A]} yields a morphism
\begin{equation*}
\iota[\rho]:=\iota[D,\rho]:=\iota[\{\sigma\in\tr(D)\,|\,\sigma\ll\rho\}]:D[\rho]\To D
\end{equation*}
with $\rng(\iota[\rho])=\{\sigma\in\tr(D)\,|\,\sigma\ll\rho\}$.

\begin{corollary}\label{cor:morph-trace-restriction}
For each morphism $\nu:D\To E$ and each $\rho\in\tr(D)$ we have
\begin{equation*}
\rng(\nu\circ\iota[D,\rho])\subseteq\rng(\iota[E,\tr(\nu)(\rho)]).
\end{equation*}
If $\nu$ is a segment, then the converse inclusion holds as well. 
\end{corollary}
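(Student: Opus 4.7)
The plan is to reduce everything to Lemma~\ref{lem:iota[A]} and Lemma~\ref{lem:tr-equiv-ll}. By Lemma~\ref{lem:iota[A]}, the two sides of the desired inclusion unfold to
\begin{equation*}
\rng(\nu\circ\iota[D,\rho])=\tr(\nu)\bigl(\{\sigma\in\tr(D)\,|\,\sigma\ll\rho\}\bigr)\quad\text{and}\quad\rng(\iota[E,\tr(\nu)(\rho)])=\{\tau\in\tr(E)\,|\,\tau\ll\tr(\nu)(\rho)\},
\end{equation*}
so the entire corollary becomes a statement about how $\tr(\nu)$ interacts with $\ll$.

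For the stated inclusion, I would pick an arbitrary element of $\rng(\nu\circ\iota[D,\rho])$, write it as $\tr(\nu)(\sigma)$ with $\sigma\ll\rho$, and apply Lemma~\ref{lem:tr-equiv-ll} to conclude $\tr(\nu)(\sigma)\ll\tr(\nu)(\rho)$. A final invocation of Lemma~\ref{lem:iota[A]} then places this element in $\rng(\iota[E,\tr(\nu)(\rho)])$.

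For the converse, suppose $\nu$ is a segment and consider $\tau\in\rng(\iota[E,\tr(\nu)(\rho)])$, i.e., $\tau\ll\tr(\nu)(\rho)$. The crucial small step is to observe that irreflexivity and transitivity of $\ll$ force $\tr(\nu)(\rho)\not\ll\tau$; since $\tr(\nu)(\rho)\in\rng(\nu)$ by construction, Proposition~\ref{prop:segment-trace} then yields $\tau\in\rng(\nu)$, say $\tau=\tr(\nu)(\sigma)$. The ``only if'' direction of Lemma~\ref{lem:tr-equiv-ll} converts $\tau\ll\tr(\nu)(\rho)$ back into $\sigma\ll\rho$, so $\sigma\in\rng(\iota[D,\rho])$ by Lemma~\ref{lem:iota[A]}, and composing gives $\tau\in\rng(\nu\circ\iota[D,\rho])$ as required.

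There is no serious obstacle: the argument is a definition chase routed through two lemmas that have already been proved. The only spot requiring any thought is the passage from $\tau\ll\tr(\nu)(\rho)$ to $\tr(\nu)(\rho)\not\ll\tau$, which is exactly what makes Proposition~\ref{prop:segment-trace} applicable and is the reason the second inclusion depends on $\nu$ being a segment.
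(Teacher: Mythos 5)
Your proposal is correct and follows essentially the same route as the paper: the first inclusion via Lemma~\ref{lem:iota[A]} and Lemma~\ref{lem:tr-equiv-ll}, and the converse via the observation that $\tau\ll\tr(\nu)(\rho)$ forces $\tr(\nu)(\rho)\not\ll\tau$, which makes Proposition~\ref{prop:segment-trace} applicable. Your explicit justification of that last step by irreflexivity and transitivity of $\ll$ is exactly the point the paper leaves implicit.
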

\begin{proof}
An arbitrary element of $\rng(\nu\circ\iota[\rho])$ has the form $\tr(\nu)(\sigma)$ with $\sigma\in\rng(\iota[\rho])$ and hence $\sigma\ll\rho$. By the previous lemma we get $\tr(\nu)(\sigma)\ll\tr(\nu)(\rho)$, as needed for $\tr(\nu)(\sigma)\in\rng(\iota[\tr(\nu)(\rho)])$. Now assume that $\nu$ is a segment. For an arbitrary element $\tau\in\rng(\iota[\tr(\nu)(\rho)])$ we have $\tau\ll\tr(\nu)(\rho)$ and hence
\begin{equation*}
\rng(\nu)\ni\tr(\nu)(\rho)\not\ll\tau.
\end{equation*}
By Proposition~\ref{prop:segment-trace} we get $\tau\in\rng(\nu)$, say $\tau=\tr(\nu)(\sigma)$. The previous lemma yields $\sigma\ll\rho$ and hence $\sigma\in\rng(\iota[\rho])$, so that we get $\tau=\tr(\nu)(\sigma)\in\rng(\nu\circ\iota[\rho])$.
\end{proof}

Together with Proposition~\ref{prop:trace-inclusion-to-morphism}, the corollary justifies the following:

\begin{definition}\label{def:nu-rho}
For a morphism $\nu:D\To E$ and an element $\rho\in\tr(D)$, we define
\begin{equation*}
\nu^\rho:D[\rho]\To E[\tr(\nu)(\rho)]
\end{equation*}
as the unique morphism with $\iota[E,\tr(\nu)(\rho)]\circ\nu^\rho=\nu\circ\iota[D,\rho]$.
\end{definition}

Let us observe that $\nu^\rho$ is $\Delta^0_1$-definable, given that the same holds for $\nu$: According to Definition~\ref{def:iota[A]}, the components of $\iota[\rho]$ are inclusion maps. For $\sigma\in D[\rho](X)$, this means that $\nu^\rho_X(\sigma)=\tau$ is equivalent to $\nu_X(\sigma)=\tau$. The following result will be needed to verify the support condition for~$P^*$. In view of Proposition~\ref{prop:pullback-dilator}, it tells us that we are concerned with a pullback.

\begin{corollary}\label{cor:nu-rho-pullback}
We have $\rng(\nu)\cap\rng(\iota[E,\tr(\nu)(\rho)])\subseteq\rng(\nu\circ\iota[D,\rho])$ for each morphism $\nu:D\To E$ and each element~$\rho\in\tr(D)$.
\end{corollary}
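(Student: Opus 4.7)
The plan is to unfold the definitions and then apply Lemma~\ref{lem:tr-equiv-ll}. Take an arbitrary element $\tau\in\rng(\nu)\cap\rng(\iota[E,\tr(\nu)(\rho)])$. From Lemma~\ref{lem:iota[A]} the second conjunct amounts to $\tau\ll\tr(\nu)(\rho)$, and the first conjunct provides some $\sigma\in\tr(D)$ with $\tau=\tr(\nu)(\sigma)$. Substituting, we get $\tr(\nu)(\sigma)\ll\tr(\nu)(\rho)$ in $\tr(E)$.

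Next I would invoke Lemma~\ref{lem:tr-equiv-ll} in the ``only if'' direction to transfer this back to $D$, obtaining $\sigma\ll\rho$ in $\tr(D)$. By Lemma~\ref{lem:iota[A]} applied to $D$ this means $\sigma\in\rng(\iota[D,\rho])$, say $\sigma=\tr(\iota[D,\rho])(\sigma_0)$ for some $\sigma_0\in\tr(D[\rho])$. Applying $\tr(\nu)$ and using functoriality of $\tr(\cdot)$ yields
\begin{equation*}
\tau=\tr(\nu)(\sigma)=\tr(\nu)\circ\tr(\iota[D,\rho])(\sigma_0)=\tr(\nu\circ\iota[D,\rho])(\sigma_0)\in\rng(\nu\circ\iota[D,\rho]),
\end{equation*}
which is the required inclusion.

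The argument is essentially formal: everything reduces to the two equivalent characterizations of $\ll$ under $\tr(\nu)$ established in Lemma~\ref{lem:tr-equiv-ll}, together with Lemma~\ref{lem:iota[A]}. No obstacle is anticipated; unlike the converse inclusion in Corollary~\ref{cor:morph-trace-restriction}, the present statement does \emph{not} require $\nu$ to be a segment, because we have already secured a preimage $\sigma$ of $\tau$ from the assumption $\tau\in\rng(\nu)$, so we do not need Proposition~\ref{prop:segment-trace} to produce one.
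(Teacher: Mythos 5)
Your argument is correct and is essentially identical to the paper's proof: both unfold the two conjuncts to get $\tau=\tr(\nu)(\sigma)$ with $\tr(\nu)(\sigma)\ll\tr(\nu)(\rho)$, apply Lemma~\ref{lem:tr-equiv-ll} to obtain $\sigma\ll\rho$, and conclude via $\sigma\in\rng(\iota[D,\rho])$ and functoriality of $\tr(\cdot)$. Your closing remark about why the segment hypothesis is unnecessary here is also accurate.
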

\begin{proof}
An arbitrary element of $\rng(\nu)\cap\rng(\iota[\tr(\nu)(\rho)])$ has the form $\tr(\nu)(\sigma)$ and satisfies $\tr(\nu)(\sigma)\ll\tr(\nu)(\rho)$. By Lemma~\ref{lem:tr-equiv-ll} we can conclude $\sigma\ll\rho$. The latter amounts to $\sigma\in\rng(\iota[\rho])$, which implies $\tr(\nu)(\sigma)\in\rng(\nu\circ\iota[\rho])$.
\end{proof}

Let us also record the following fact, which will be used to show that the preptyx~$P^*$ is normal (cf.~Definition~\ref{def:ptyx-normal}).

\begin{corollary}\label{cor:nu-rho-iso}
If $\nu:D\To E$ is a segment, then $\nu^\rho:D[\rho]\To E[\tr(\nu)(\rho)]$ is an isomorphism, for each element $\rho\in\tr(D)$.
\end{corollary}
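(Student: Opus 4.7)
The plan is to extract inverse morphisms in both directions by combining the two halves of Corollary~\ref{cor:morph-trace-restriction} with Proposition~\ref{prop:trace-inclusion-to-morphism}. Since $\nu$ is assumed to be a segment, that corollary gives the equality of ranges
\begin{equation*}
\rng(\nu\circ\iota[D,\rho])=\rng(\iota[E,\tr(\nu)(\rho)]).
\end{equation*}
The inclusion~$\subseteq$ is precisely what Definition~\ref{def:nu-rho} uses to construct $\nu^\rho:D[\rho]\To E[\tr(\nu)(\rho)]$ via Proposition~\ref{prop:trace-inclusion-to-morphism}. Applying that proposition to the reverse inclusion $\supseteq$ (with the roles of the two morphisms into $E$ swapped), one obtains a morphism $\overline\nu^\rho:E[\tr(\nu)(\rho)]\To D[\rho]$ satisfying
\begin{equation*}
(\nu\circ\iota[D,\rho])\circ\overline\nu^\rho=\iota[E,\tr(\nu)(\rho)].
\end{equation*}

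It then remains to verify that $\nu^\rho$ and $\overline\nu^\rho$ are mutually inverse. Composing the defining identities gives
\begin{equation*}
\iota[E,\tr(\nu)(\rho)]\circ(\nu^\rho\circ\overline\nu^\rho)=(\nu\circ\iota[D,\rho])\circ\overline\nu^\rho=\iota[E,\tr(\nu)(\rho)],
\end{equation*}
so because each component of $\iota[E,\tr(\nu)(\rho)]$ is an order embedding (hence injective), we conclude $\nu^\rho\circ\overline\nu^\rho=\mathrm{id}_{E[\tr(\nu)(\rho)]}$. The symmetric calculation
\begin{equation*}
(\nu\circ\iota[D,\rho])\circ(\overline\nu^\rho\circ\nu^\rho)=\iota[E,\tr(\nu)(\rho)]\circ\nu^\rho=\nu\circ\iota[D,\rho],
\end{equation*}
together with the injectivity of the components of $\nu\circ\iota[D,\rho]$, yields $\overline\nu^\rho\circ\nu^\rho=\mathrm{id}_{D[\rho]}$. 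Alternatively, both identities can be read off directly from the uniqueness clause of Proposition~\ref{prop:trace-inclusion-to-morphism}, by observing that the identity morphisms also satisfy the relevant factorisation equations.

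I do not expect any genuine obstacle here: the entire argument is an exercise in diagram chasing within the category of predilators, and all the work has already been invested in Corollary~\ref{cor:morph-trace-restriction} (which packages the non-trivial use of the segment hypothesis via Proposition~\ref{prop:segment-trace}) and in Proposition~\ref{prop:trace-inclusion-to-morphism} (which provides the factorisation through $D[\rho]$ and $E[\tr(\nu)(\rho)]$). The only point worth double-checking is that the morphism $\overline\nu^\rho$ produced above has codomain $D[\rho]$, not merely $D$; this is guaranteed because Proposition~\ref{prop:trace-inclusion-to-morphism} is applied with $\nu\circ\iota[D,\rho]$ playing the role of~$\mu'$, so the factorising morphism lands in the source $D[\rho]$ of $\iota[D,\rho]$ by construction.
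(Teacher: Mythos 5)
Your proof is correct and follows essentially the same route as the paper: both obtain the candidate inverse from the second part of Corollary~\ref{cor:morph-trace-restriction} together with Proposition~\ref{prop:trace-inclusion-to-morphism}, and then cancel against the defining factorisation equations (the paper invokes the uniqueness clause of Proposition~\ref{prop:trace-inclusion-to-morphism}, which you also note as an alternative to your direct injectivity argument). No gaps.
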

\begin{proof}
Due to the second part of Corollary~\ref{cor:morph-trace-restriction}, we can invoke Proposition~\ref{prop:trace-inclusion-to-morphism} to get a morphism $\mu^\rho:E[\tr(\nu)(\rho)]\To D[\rho]$ with $\nu\circ\iota[\rho]\circ\mu^\rho=\iota[\tr(\nu)(\rho)]$. In view of
\begin{equation*}
\nu\circ\iota[\rho]\circ\mu^\rho\circ\nu^\rho=\iota[\tr(\nu)(\rho)]\circ\nu^\rho=\nu\circ\iota[\rho],
\end{equation*}
the uniqueness part of Proposition~\ref{prop:trace-inclusion-to-morphism} implies that $\mu^\rho\circ\nu^\rho$ is the identity on~$D[\rho]$. An analogous argument shows that $\nu^\rho\circ\mu^\rho$ is the identity on $E[\tr(\nu)(\rho)]$.
\end{proof}

In order to turn $P^*$ into a preptyx, we will define support functions $\Supp^*_{D,X}$ as in Lemma~\ref{lem:ptyx-support-variant-back}. By Lemma~\ref{lem:ptyx-support-variant}, we may assume that the preptyx~$P$ comes with functions $\Supp_{D,X}:P(D)(X)\to[\tr(D)]^{<\omega}$.

\begin{definition}
Consider a preptyx~$P$. For each morphism $\nu:D\To E$ and each linear order~$X$, we define $P^*(\nu)_X:P^*(D)(X)\To P^*(E)(X)$ by
\begin{equation*}
P^*(\nu)_X(\rho,\sigma):=\begin{cases}
(\tr(\nu)(\rho),P(\nu^\rho)_X(\sigma)) & \text{if $\sigma\in P(D[\rho])(X)$},\\
(\tr(\nu)(\rho),\top) & \text{if $\sigma=\top$}.
\end{cases}
\end{equation*}
In order to define functions $\Supp^*_{D,X}:P^*(D)(X)\to[\tr(D)]^{<\omega}$, we set
\begin{equation*}
\Supp^*_{D,X}(\rho,\sigma):=\begin{cases}
\{\rho\}\cup\Supp_{D,X}(P(\iota[\rho])_X(\sigma)) & \text{if $\sigma\in P(D[\rho])(X)$},\\
\{\rho\} & \text{if $\sigma=\top$},
\end{cases}
\end{equation*}
for each predilator~$D$ and each linear order~$X$.
\end{definition}

The relations $P^*(\nu)_X(\sigma)=\tau$ and $\Supp^*_{D,X}(\sigma)=a$ are $\Delta^0_1$-definable, by the discussion after Definition~\ref{def:P*-objects}. Our constructions culminate in the following result:

\begin{theorem}\label{thm:ptyx-normal}
For any $2$-preptyx $P$ we have the following:
\begin{enumerate}[label=(\alph*)]
\item $P^*$ is a normal $2$-preptyx,
\item if $P$ is a $2$-ptyx (i.\,e.,~preserves dilators), then so is~$P^*$,
\item there is a family of morphisms $\xi^D:P(D)+1\To P^*(D+1)$ that is natural in the predilator~$D$, in the sense that we have $\xi^E\circ (P(\nu)+1)=P^*(\nu+1)\circ\xi^D$ for any morphism $\nu:D\To E$.
\end{enumerate}
\end{theorem}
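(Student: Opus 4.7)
My plan is to verify the three items in sequence, with normality in part~(a) being the main conceptual step and the remaining parts largely bookkeeping built from the machinery of Section~\ref{sect:ptykes}.

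For (a), I first check that $P^*(D)$ is a predilator. Functoriality, monotonicity, and the support condition from part~(ii) of Definition~\ref{def:dilator} transfer summand by summand from the corresponding properties of the predilators $P(D[\rho])+1$, since $\supp^{P^*(D)}_X(\rho,\sigma)=\supp^{P(D[\rho])+1}_X(\sigma)$ and the order on $P^*(D)(X)$ is lexicographic. Functoriality of $P^*$ itself amounts to checking that each component $P^*(\nu)_X$ is an order embedding (combining the fact that $\tr(\nu)$ is order-preserving with the fact that each $P(\nu^\rho)_X$ is an embedding), that it is natural in~$X$, and that $P^*$ respects identities and composition via the uniqueness clause in Proposition~\ref{prop:trace-inclusion-to-morphism} for the definition of $\nu^\rho$. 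To see that $P^*$ is a preptyx I invoke Lemma~\ref{lem:ptyx-support-variant-back}: naturality of $\Supp^*_{D,X}$ in~$D$ uses $\iota[E,\tr(\mu)(\rho)]\circ\mu^\rho=\mu\circ\iota[D,\rho]$ together with naturality of $\Supp$ for $P$, and the support condition unpacks as follows: if $\Supp^*_{E,X}(\rho,\sigma)\subseteq\rng(\mu)$, then already $\rho=\tr(\mu)(\rho_0)$, and combining Corollary~\ref{cor:nu-rho-pullback} with the support condition of $P$ applied to $\mu^{\rho_0}$ yields a preimage of $\sigma$ and hence of $(\rho,\sigma)$ under $P^*(\mu)_X$.

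Normality is the heart of (a). Given a segment $\nu:D\To E$ and an inequality
\begin{equation*}
(\rho',\sigma')<_{P^*(E)(X)}P^*(\nu)_X(\rho,\sigma)=(\tr(\nu)(\rho),(P(\nu^\rho)+1)_X(\sigma)),
\end{equation*}
I split into two cases. If $\rho'=\tr(\nu)(\rho)$, then by Corollary~\ref{cor:nu-rho-iso} the map $\nu^\rho$ is an isomorphism, so the component $(P(\nu^\rho)+1)_X$ is surjective and supplies a preimage of $\sigma'$. If $\rho'<_{\tr(E)}\tr(\nu)(\rho)$, I claim $\rho'\in\rng(\tr(\nu))$: when $\rho'\ll\tr(\nu)(\rho)$ this is immediate from the second part of Corollary~\ref{cor:morph-trace-restriction}; otherwise $\rho'$ and $\tr(\nu)(\rho)$ are $\ll$-incomparable, and Proposition~\ref{prop:segment-trace} applied to $\tr(\nu)(\rho)\in\rng(\nu)$ with $\tr(\nu)(\rho)\not\ll\rho'$ still gives $\rho'\in\rng(\nu)$. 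Writing $\rho'=\tr(\nu)(\rho'')$ and invoking Corollary~\ref{cor:nu-rho-iso} once more to find a preimage of $\sigma'$ under $(P(\nu^{\rho''})+1)_X$, I conclude $(\rho',\sigma')\in\rng(P^*(\nu)_X)$. The subtle point here, and the main obstacle, is that the linearization $<_{\tr(E)}$ is strictly coarser than $\ll$, so one cannot directly invoke a downward-closure argument; the $\ll$-incomparable subcase must be handled separately using Proposition~\ref{prop:segment-trace}.

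For (b), assume $P$ preserves dilators, let $D$ be a dilator, and let $X$ be a well order. Each $D[\rho]$ is a dilator because $D[\rho](Y)$ is a suborder of the well order~$D(Y)$ for any well order~$Y$; hence $P(D[\rho])$ is a dilator and $(P(D[\rho])+1)(X)$ is well-founded. The linear order $\tr(D)$ embeds order-preservingly into $\overline D(\omega)$ by the very definition of $<_{\tr(D)}$, and $\overline D(\omega)$ is a well order since $D$ is a (coded) dilator and $\omega$ is well-founded. Any descending sequence in $P^*(D)(X)$ would then produce a non-increasing, hence eventually constant, sequence of first components in $\tr(D)$, followed by a descending tail inside a single $(P(D[\rho_0])+1)(X)$, which is impossible.

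For (c), I exploit that $(D+1)[(0,\top)]$ is canonically identified with $D$. The trace satisfies $\tr(D+1)=\tr(D)\cup\{(0,\top)\}$, and $(0,\top)$ is the $\ll$-maximum (and therefore the $<_{\tr(D+1)}$-maximum) because $(D+1)(g)(\top)=\top$ for every~$g$ while $(D+1)(f)(\sigma)\in D(X)$ for $\sigma\in D(n)$; componentwise one checks $(D+1)[(0,\top)](n)=D(n)$ with the same morphism action and supports. I then define $\xi^D_X(\sigma):=((0,\top),\sigma)$. Since $(0,\top)$ is maximal in $\tr(D+1)$ and the internal order of the $(0,\top)$-summand agrees with that of $(P(D)+1)(X)$, the map $\xi^D_X$ is an order embedding, and naturality in~$X$ is immediate from $(P((D+1)[(0,\top)])+1)(f)=(P(D)+1)(f)$. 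For naturality in~$D$, given $\nu:D\To E$, I compute $\tr(\nu+1)((0,\top))=(0,\top)$ and unfold Definition~\ref{def:nu-rho} to identify $(\nu+1)^{(0,\top)}$ with $\nu$ under the above identifications, after which both $\xi^E\circ(P(\nu)+1)_X$ and $P^*(\nu+1)_X\circ\xi^D_X$ evaluate to $((0,\top),(P(\nu)+1)_X(\sigma))$.
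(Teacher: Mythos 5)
Your proposal is correct and follows essentially the same route as the paper: the same key ingredients (Lemma~\ref{lem:ptyx-support-variant-back}, Corollary~\ref{cor:nu-rho-pullback} for the support condition, Proposition~\ref{prop:segment-trace} and Corollary~\ref{cor:nu-rho-iso} for normality, well-foundedness of $<_{\tr(D)}$ via Lemma~\ref{lem:linearize-ll} for part~(b), and the element $(0,\top)\in\tr(D+1)$ for part~(c)). The only cosmetic differences are that the paper handles normality in one stroke (from $\rho'\leq_{\tr(E)}\tr(\nu)(\rho)$ it gets $\tr(\nu)(\rho)\not\ll\rho'$ directly and applies Proposition~\ref{prop:segment-trace}, where you split into three subcases) and that in~(c) the paper constructs the morphism $\iota^D:D\To(D+1)[(0,\top)]$ explicitly via Proposition~\ref{prop:trace-inclusion-to-morphism} rather than identifying the two predilators outright, but these amount to the same argument.
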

\begin{proof}
(a) Given a predilator~$D$, we can invoke Lemma~\ref{lem:D[A]-predil} to learn that $D[\rho]$ is a predilator for each $\rho\in\tr(D)$. Since $P$ is a preptyx, it follows that $P(D[\rho])$ and $P(D[\rho])+1$ are predilators. It is straightforward to deduce that $P^*(D)$ is a predilator as well. Similarly, $P^*(\nu):P^*(D)\To P^*(E)$ is a morphism of predilators when the same holds for~$\nu:D\To E$. The claim that $P^*$ is functorial can be reduced to the following facts: First, if $\nu:D\To D$ is the identity, then $\nu^\rho:D[\rho]\To D[\rho]$ is the identity for each~$\rho\in\tr(D)$. Secondly, we have
\begin{equation*}
(\mu\circ\nu)^\rho=\mu^{\tr(\nu)(\rho)}\circ\nu^\rho
\end{equation*}
for arbitrary morphisms $\nu:D_0\To D_1$ and $\mu:D_1\To D_2$ and any $\rho\in\tr(D_0)$. Both facts follow from the uniqueness part of Proposition~\ref{prop:trace-inclusion-to-morphism}. To show that the functions $\Supp^*_{D,X}:P^*(D)(X)\to[\tr(D)]^{<\omega}$ are natural in $D$, we consider a morphism $\nu:D\To E$ and an element $(\rho,\sigma)\in P^*(D)(X)$. Since $P$ is a preptyx, the support functions $\Supp_{D,X}:P(D)(X)\to[\tr(D)]^{<\omega}$ are natural (cf.~Lemma~\ref{lem:ptyx-support-variant}). In the more difficult case of $\sigma\neq\top$, we can deduce
\begin{align*}
\Supp^*_{E,X}(P^*(\nu)_X(\rho,\sigma))&=\Supp^*_{E,X}(\tr(\nu)(\rho),P(\nu^\rho)_X(\sigma))=\\
{}&=\{\tr(\nu)(\rho)\}\cup\Supp_{E,X}(P(\iota[\tr(\nu)(\rho)]\circ\nu^\rho)_X(\sigma))=\\
{}&=\{\tr(\nu)(\rho)\}\cup\Supp_{E,X}(P(\nu\circ\iota[\rho])_X(\sigma))=\\
{}&=\{\tr(\nu)(\rho)\}\cup[\tr(\nu)]^{<\omega}(\Supp_{D,X}(P(\iota[\rho])_X(\sigma)))=\\
{}&=[\tr(\nu)]^{<\omega}(\Supp^*_{D,X}(\rho,\sigma)).
\end{align*}
In order to conclude that $P^*$ is a preptyx, we verify the support condition from Lemma~\ref{lem:ptyx-support-variant} (which implies the one from Definition~\ref{def:ptyx}, by Lemma~\ref{lem:ptyx-support-variant-back}). For this purpose, we consider a morphism $\nu:D\To E$ and an element $(\rho,\sigma)\in P^*(E)(X)$ with $\Supp^*_{E,X}(\rho,\sigma)\subseteq\rng(\nu)$. The latter yields $\rho\in\rng(\nu)$, say $\rho=\tr(\nu)(\rho_0)$. In case of $\sigma\neq\top$, it also yields
\begin{multline*}
\Supp_{E,X}(P(\iota[\rho])_X(\sigma))=[\tr(\iota[\rho])]^{<\omega}(\Supp_{D,X}(\sigma))\subseteq{}\\
{}\subseteq\rng(\nu)\cap\rng(\iota[\rho])\subseteq\rng(\nu\circ\iota[\rho_0]),
\end{multline*}
where the last inclusion relies on Corollary~\ref{cor:nu-rho-pullback}. By the support condition for the preptyx~$P$, we get an element $\sigma_0\in P(D[\rho_0])(X)$ with
\begin{equation*}
P(\iota[\rho])_X(\sigma)=P(\nu\circ\iota[\rho_0])_X(\sigma_0)=P(\iota[\rho]\circ\nu^{\rho_0})_X(\sigma_0).
\end{equation*}
Since $P(\iota[\rho])$ is a morphism of predilators, the component $P(\iota[\rho])_X$ is an order embedding and in particular injective. We obtain $\sigma=P(\nu^{\rho_0})_X(\sigma_0)$ and then
\begin{equation*}
(\rho,\sigma)=(\tr(\nu)(\rho_0),P(\nu^{\rho_0})_X(\sigma_0))=P^*(\nu)_X(\rho_0,\sigma_0)\in\rng(P^*(\nu)_X),
\end{equation*}
as required by the support condition for~$P^*$. Finally, we show that the $2$-preptyx~$P^*$ is normal (cf.~Definition~\ref{def:ptyx-normal}). Consider a segment $\nu:D\To E$ and an inequality
\begin{equation*}
(\rho,\sigma)<_{P^*(E)(X)} P^*(\nu)_X(\rho',\sigma')\in\rng(P^*(\nu)_X).
\end{equation*}
The latter entails $\rho\leq_{\tr(E)}\tr(\nu)(\rho')$, so that Lemma~\ref{lem:linearize-ll} yields
\begin{equation*}
\rng(\nu)\ni\tr(\nu)(\rho')\not\ll\rho.
\end{equation*}
By Proposition~\ref{prop:segment-trace} we get $\rho\in\rng(\nu)$, say $\rho=\tr(\nu)(\rho_0)$. Now Corollary~\ref{cor:nu-rho-iso} tells us that $\nu^{\rho_0}:D[\rho_0]\To E[\rho]$ is an isomorphism. Since~$P$ is a functor, it follows that the natural transformation $P(\nu^{\rho_0})$ and its component $P(\nu^{\rho_0})_X$ are isomorphisms as well. For $\sigma\neq\top$ we get $\sigma=P(\nu^{\rho_0})_X(\sigma_0)$ for some $\sigma_0\in P(D[\rho_0])_X$. This yields
\begin{equation*}
(\rho,\sigma)=(\tr(\nu)(\rho_0),P(\nu^{\rho_0})_X(\sigma_0))=P^*(\nu)_X(\rho_0,\sigma_0)\in\rng(P^*(\nu)_X).
\end{equation*}
For $\sigma=\top$ we have $(\rho,\sigma)=(\tr(\nu)(\rho_0),\top)=P^*(\nu)_X(\rho_0,\top)\in\rng(P^*(\nu)_X)$.

(b)~Consider a dilator~$D$ and a well order~$X$. We need to show that $P^*(D)(X)$ is well founded. The morphisms $\iota[\rho]:D[\rho]\To D$ ensure that $D[\rho]$ is a dilator for each~$\rho\in\tr(D)$. Given that~$P$ is a ptyx, it follows that each order $P(D[\rho])(X)+1$ is well founded. Now consider a (not necessarily strictly) descending sequence
\begin{equation*}
(\rho_0,\sigma_0),(\rho_1,\sigma_1),\ldots\subseteq P^*(D)(X).
\end{equation*}
The first components form a descending sequence with respect to the order~$<_{\tr(D)}$ on the trace of~$D$. Given that~$D$ is a dilator, we can invoke Lemma~\ref{lem:linearize-ll} to learn that~$<_{\tr(D)}$ is a well order. Hence there is some $N\in\mathbb N$ with $\rho_n=\rho_N$ for all~$n\geq N$. Then $\sigma_N,\sigma_{N+1},\ldots$ is a sequence in $P(D[\rho_N])(X)+1$. We have already seen that the latter is well founded. This yields an $n\geq N$ with $\sigma_n\leq_{P(D[\rho_N])(X)+1}\sigma_{n+1}$. Together with $\rho_n=\rho_{n+1}$ we get
\begin{equation*}
(\rho_n,\sigma_n)\leq_{P^*(D)(X)}(\rho_{n+1},\sigma_{n+1}),
\end{equation*}
as needed to show that $P^*(D)(X)$ is a well order.

(c)~The element $\top\in (D+1)(0)$ yields an element $(0,\top)\in\tr(D+1)$, which gives rise to a morphism $\iota[D+1,(0,\top)]:(D+1)[(0,\top)]\To D+1$ (cf.~the discussion before Corollary~\ref{cor:morph-trace-restriction}). We also have a morphism $\pi^D:D\To D+1$, where each component $\pi^D_X:D(X)\hookrightarrow D(X)\cup\{\top\}=(D+1)(X)$ is the obvious inclusion map. Let us show that we have
\begin{equation*}
\rng(\pi^D)\subseteq\rng(\iota[D+1,(0,\top)])=\{\sigma\in\tr(D+1)\,|\,\sigma\ll(0,\top)\}.
\end{equation*}
An arbitrary element of $\rng(\pi^D)$ can be written as $\tr(\pi^D)(m,\sigma_0)=(m,\pi^D_m(\sigma_0))$ for some $(m,\sigma_0)\in\tr(D)$. To show that we have $(m,\pi^D_m(\sigma_0))\ll(0,\top)$ in~$\tr(D+1)$, we consider arbitrary embeddings $f:m\to X$ and $g:0\to X$ into some order~$X$ (in~fact, $g$ can only be the empty function). In view of $\pi^D_m(\sigma_0)\in D(m)\subseteq D(m)\cup\{\top\}$, we have $(D+1)(f)(\pi^D_m(\sigma_0))=D(f)(\pi^D_m(\sigma_0))\in D(X)\subseteq D(X)\cup\{\top\}$. This yields
\begin{equation*}
(D+1)(f)(\pi^D_m(\sigma_0))<_{(D+1)(X)}\top=(D+1)(g)(\top),
\end{equation*}
as needed for $(m,\pi^D_m(\sigma_0))\ll(0,\top)$. Now Proposition~\ref{prop:trace-inclusion-to-morphism} yields a morphism
\begin{equation*}
\iota^D:D\To (D+1)[(0,\top)]\quad\text{with}\quad\iota[D+1,(0,\top)]\circ\iota^D=\pi^D.
\end{equation*}
Since $\iota[D+1,(0,\top)]$ and $\pi^D$ are $\Delta^0_1$-definable relative to~$D$, the same holds for~$\iota^D$. As~explained after Definition~\ref{def:P*-objects}, we obtain a $\Delta^0_1$-definition of the morphism
\begin{equation*}
P(\iota^D):P(D)\To P((D+1)[(0,\top)]).
\end{equation*}
The components of $\xi^D:P(D)+1\To P^*(D+1)$ can now be defined by
\begin{equation*}
\xi^D_X(\sigma):=\begin{cases}
((0,\top),P(\iota^D)_X(\sigma)) & \text{if $\sigma\in P(D)(X)\subseteq (P(D)+1)(X)$},\\
((0,\top),\sigma) & \text{if $\sigma=\top\in (P(D)+1)(X)$}.
\end{cases}
\end{equation*}
It is straightforward to verify that $\xi^D_X$ is an embedding and natural in~$X$, so that $\xi^D$ is a morphism of predilators. To establish naturality in~$D$, we consider $\nu:D\To E$. Note $\tr(\nu+1)(0,\top)=(0,(\nu+1)_0(\top))=(0,\top)$ and invoke Definition~\ref{def:nu-rho} to get
\begin{multline*}
\iota[E+1,(0,\top)]\circ(\nu+1)^{(0,\top)}\circ\iota^D=(\nu+1)\circ\iota[D+1,(0,\top)]\circ\iota^D=\\
=(\nu+1)\circ\pi^D=\pi^E\circ\nu=\iota[E+1,(0,\top)]\circ\iota^E\circ\nu.
\end{multline*}
This entails $(\nu+1)^{(0,\top)}\circ\iota^D=\iota^E\circ\nu$, since the components of our morphisms are embeddings. For an element $\sigma\in P(D)(X)\subseteq (P(D)+1)(X)$ we can deduce
\begin{align*}
\xi^E_X\circ(P(\nu)+1)_X(\sigma)&=\xi^E_X(P(\nu)_X(\sigma))=((0,\top),P(\iota^E)_X\circ P(\nu)_X(\sigma))=\\
{}&=(\tr(\nu+1)(0,\top),P((\nu+1)^{(0,\top)})_X\circ P(\iota^D)_X(\sigma))=\\
{}&=P^*(\nu+1)_X((0,\top),P(\iota^D)_X(\sigma))=P^*(\nu+1)_X\circ\xi^D_X(\sigma).
\end{align*}
The case of $\sigma=\top\in (P(D)+1)(X)$ is similar and easier.
\end{proof}

\section{From fixed points of $2$-ptykes to $\Pi^1_2$-induction}\label{sect:fixed-point-to-induction}

In this section, we deduce $\Pi^1_2$-induction along~$\mathbb N$ from the assumption that every normal $2$-ptyx has a fixed point that is a dilator. For this purpose, we work out the details of the argument that we have sketched in the introduction.

Consider a $\Pi^1_2$-formula $\psi(n)$ with a distinguished number variable, possibly with further number or set parameters. The Kleene normal form theorem (see e.\,g.~\cite[Lemma~V.1.4]{simpson09}) yields a $\Delta^0_0$-formula $\theta$ such that $\aca_0$ proves
\begin{equation*}
\psi(n)\leftrightarrow\forall_{Z\subseteq\mathbb N}\exists_{f:\mathbb N\to\mathbb N}\forall_{m\in\mathbb N}\,\theta(Z[m],f[m],n).
\end{equation*}
Here $f[m]=\langle f(0),\dots,f(m-1)\rangle$ denotes the sequence of the first $m$ values of~$f$, coded by a natural number; in writing $Z[m]$, we identify the set $Z\subseteq\mathbb N$ with its characteristic function. The formulas $\psi$ and $\theta$ will be fixed throughout the following. We also fix~$\aca_0$ as base theory.

Let us introduce some notation: We write $Y^{<\omega}$ for the set of finite sequences with entries from the set~$Y$. In order to refer to the entries of a sequence $s\in Y^{<\omega}$ of length $m=\len(s)$, we will often write it as $s=\langle s(0),\dots,s(m-1)\rangle$. For $k\leq m$ we put $s[k]:=\langle s(0),\dots,s(k-1)\rangle$. If $Y$ is linearly ordered, the Kleene-Brouwer order (also called Lusin-Sierpi\'nski order) on $Y^{<\omega}$ is the linear order defined by
\begin{equation*}
s<_{\kb(Y)}t\,:\Leftrightarrow\,\begin{cases}
\text{either }\len(s)>\len(t)\text{ and }s[\len(t)]=t,\\
\text{or $s[k]=t[k]$ and $s(k)<_Y t(k)$ for some $k<\min\{\len(s),\len(t)\}$.}
\end{cases}
\end{equation*}
We say that a subset $\mathcal T\subseteq Y^{<\omega}$ is a tree if $s\in\mathcal T$ and $k\leq\len(s)$ imply $s[k]\in\mathcal T$. Unless indicated otherwise, we assume that any tree $\mathcal T\subseteq Y^{<\omega}$ carries the Kleene-Brouwer order~$<_{\kb(Y)}$. Recall that a branch of $\mathcal T$ is given by a function $f:\mathbb N\to Y$ such that $f[m]\in\mathcal T$ holds for all~$m\in\mathcal T$. If $Y$ is a well order, then $\mathcal T\subseteq Y^{<\omega}$  (with order relation $<_{\kb(Y)}$) is well founded if, and only if, it has no branch (by the proof of~\cite[Lemma~V.1.3]{simpson09}, which is formulated for~\mbox{$Y=\mathbb N$}). One can conclude that our $\Pi^1_2$-formula $\psi(n)$ fails for $n\in\mathbb N$ if, and only if, there is a $Z\subseteq\mathbb N$ such that the Kleene-Brouwer order $<_{\kb(\mathbb N)}$ is well founded on the tree
\begin{equation*}
\mathcal T_Z^n:=\{t\in\mathbb N^{<\omega}\,|\,\forall_{k\leq\len(t)}\neg\theta(Z[k],t[k],n)\}.
\end{equation*}
In the following we construct predilators~$D_\psi^n$ such that $D_\psi^n$ is a dilator if, and only if, the instance $\psi(n)$ holds. This is a version of Girard's result that the notion of dilator is $\Pi^1_2$-complete. The construction that we present is due to D.~Normann (see~\cite[Theorem~8.E.1]{girard-book-part2}). We recall it in full detail, because the rest of this section depends on the construction itself, not just on the result. Given a linear order~$X$, the idea is to define $D^n_\psi(X)$ as a subtree of $(2\times X)^{<\omega}$ (recall $m=\{0,\dots,m-1\}$ with the usual linear order). Along each potential branch of $D^n_\psi(X)$, we aim to construct a set $Z\subseteq\mathbb N$ (determined by the characteristic function $\mathbb N\to 2$ from the first component of the branch) and, simultaneously, an embedding of $\mathcal T^n_Z$ into $X$. If $D^n_\psi$ fails to be a dilator, then~$D^n_\psi(X)$ has a branch for some well order~$X$. The resulting embedding $\mathcal T^n_Z\to X$ ensures that $\mathcal T^n_Z$ is a well order, so that $\psi(n)$ fails. Since~$Z$ is not given in advance, we need to approximate $\mathcal T^n_Z$ by the trees
\begin{equation*}
\mathcal T^n_s:=\{t\in\mathbb N^{<\omega}\,|\,\len(t)\leq\len(s)\text{ and }\forall_{k\leq\len(t)}\neg\theta(s[k],t[k],n)\}
\end{equation*}
for $s\in 2^{<\omega}$. The relation $t\in\mathcal T_s^n$ is $\Delta^0_1$-definable, as $\theta$ is a $\Delta^0_0$-formula. We observe $\mathcal T_{s[m]}^n=\{t\in\mathcal T_s^n\,|\,\len(t)\leq m\}$ for $m\leq\len(s)$, as well as $\mathcal T_Z^n=\bigcup\{\mathcal T_{Z[m]}^n\,|\,m\in\mathbb N\}$.

In order to describe the following constructions in an efficient way, we introduce some notation in connection with products. First recall that the product of sets $Y_0,\dots,Y_{k-1}$ is given by
\begin{equation*}
Y_0\times\dots\times Y_{k-1}:=\{(y_0,\dots,y_{k-1})\,|\,y_i\in Y_i\text{ for each }i<k\}.
\end{equation*}
If $Y_i=(Y_i,<_i)$ is a linear order for each $i<k$, then we assume that the product carries the lexicographic order, in which $(y_0,\dots,y_{k-1})$ precedes $(y_0',\dots,y_{k-1}')$ if, and only if, there is an index $j<k$ with $y_j<_jy_j'$ and $y_i=y_i'$ for all~$i<j$. Given sequences $s_i=\langle s_i(0),\dots,s_i(m-1)\rangle\in Y_i^{<\omega}$ of the same length, we can construct a sequence $s_0\times\dots\times s_{k-1}\in(Y_0\times\dots\times Y_{k-1})^{<\omega}$ by setting
\begin{equation*}
s_0\times\dots\times s_{k-1}:=\langle (s_0(0),\dots,s_{k-1}(0)),\dots,(s_0(m-1),\dots,s_{k-1}(m-1))\rangle.
\end{equation*}
Note that any sequence in $(Y_0\times\dots\times Y_{k-1})^{<\omega}$ can be uniquely written in this form. Analogously, we will combine functions $g_i:\mathbb N\to Y_i$ into a function
\begin{equation*}
g_0\times\dots\times g_{k-1}:\mathbb N\to Y_0\times\dots\times Y_{k-1}
\end{equation*}
with $(g_0\times\dots\times g_{k-1})(l):=(g_0(l),\dots,g_{k-1}(l))$. For arbitrary $m\in\mathbb N$, we can observe
\begin{equation*}
(g_0\times\dots\times g_{k-1})[m]=g_0[m]\times\dots\times g_{k-1}[m].
\end{equation*}
We use our product notation in a somewhat flexible way, for example by combining $s_0\in Y_0^{<\omega}$ and $t=s_1\times s_2\in(Y_1\times Y_2)^{<\omega}$ into $s_0\times t:=s_0\times s_1\times s_2\in(Y_0\times Y_1\times Y_2)^{<\omega}$. We can now officially define the predilators~$D^n_\psi$ that were mentioned above.

\begin{definition}\label{def:dils-D_n}
For each linear order~$X$, we define $D^n_\psi(X)$ as the tree of all sequences $s\times t\in(2\times X)^{<\omega}$ with the following property: For all indices $i,j<\len(t)$ that are (numerical codes for) elements of $\mathcal T_s^n\subseteq\mathbb N^{<\omega}$, we have
\begin{equation*}
i<_{\kb(\mathbb N)}j\quad\To\quad t(i)<_X t(j).
\end{equation*}
Given an embedding $f:X\to Y$, we define $D^n_\psi(f):D^n_\psi(X)\to D^n_\psi(Y)$ by
\begin{equation*}
D^n_\psi(f)(s\times\langle t(0),\dots,t(m-1)\rangle):=s\times\langle f(t(0)),\dots,f(t(m-1))\rangle.
\end{equation*}
Finally, we define functions $\supp^n_X:D^n_\psi(X)\to[X]^{<\omega}$ by setting
\begin{equation*}
\supp^n_X(s\times\langle t(0),\dots,t(m-1)\rangle):=\{t(0),\dots,t(m-1)\}
\end{equation*}
for each linear order~$X$.
\end{definition}

As mentioned before, the following comes from Normann's proof of Girard's result that the notion of dilator is~$\Pi^1_2$-complete (see~\cite[Theorem~8.E.1]{girard-book-part2}).

\begin{proposition}\label{prop:D_n-psi}
For each $n\in\mathbb N$, we have the following:
\begin{enumerate}[label=(\alph*)]
\item the constructions from Definition~\ref{def:dils-D_n} yield a predilator~$D^n_\psi$,
\item the predilator~$D^n_\psi$ is a dilator if, and only if, the $\Pi^1_2$-formula $\psi(n)$ holds.
\end{enumerate}
\end{proposition}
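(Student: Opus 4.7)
For part (a), the plan is to verify each clause of Definition~\ref{def:dilator} directly. The underlying set $D^n_\psi(X)\subseteq(2\times X)^{<\omega}$ carries the Kleene-Brouwer order inherited from $2\times X$, so being a linear order is automatic, and the defining constraint is closed under prefixes, so $D^n_\psi(X)$ is a tree. For functoriality one checks that $D^n_\psi(f)(s\times t)=s\times(f\circ t)$ really sends $D^n_\psi(X)$ into $D^n_\psi(Y)$: the tree condition refers only to the order pattern of $t$ on the index set $\mathcal T_s^n\subseteq\mathbb N^{<\omega}$, which is preserved by post-composition with the strictly increasing map $f$. That $D^n_\psi(f)$ is itself an embedding in $\lo$ is then immediate from the lexicographic definition of $<_{\kb}$, and monotonicity ($f\leq g\To D^n_\psi(f)\leq D^n_\psi(g)$) follows entrywise from the same inspection. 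The support functions are just the sets of values of the second component, so naturality in $X$ is obvious; the support condition reduces to the observation that if every entry of $t$ lies in $\rng(f)$, then $t=f\circ t_0$ for a unique $t_0$ which, by the same order-pattern argument, still satisfies the tree condition, giving $s\times t_0\in D^n_\psi(X)$ with $D^n_\psi(f)(s\times t_0)=s\times t$.

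For part (b), both directions hinge on the classical correspondence (Simpson's Lemma~V.1.3, adapted to alphabets that are well orders rather than $\mathbb N$) between infinite descending sequences in a tree under $<_{\kb}$ and branches of that tree. For the forward direction I argue contrapositively: if $D^n_\psi(X)$ is ill-founded for some well order $X$, extract a branch and write it as $h\times f\colon\mathbb N\to 2\times X$. Let $Z\subseteq\mathbb N$ be the set with characteristic function $h$. The condition that $h[m]\times f[m]\in D^n_\psi(X)$ holds for every $m$ says precisely that whenever $i,j<m$ lie in $\mathcal T_{Z[m]}^n$ with $i<_{\kb(\mathbb N)}j$, one has $f(i)<_X f(j)$; since any $i\in\mathcal T_Z^n$ lies in $\mathcal T_{Z[m]}^n$ for all sufficiently large $m$, this means that $f$ restricts to an order embedding $\mathcal T_Z^n\hookrightarrow X$. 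Hence $\mathcal T_Z^n$ is well-founded, so $\psi(n)$ fails by the tree characterization recorded earlier. Conversely, given $Z$ with $\mathcal T_Z^n$ well-founded, set $X:=\mathcal T_Z^n$ (a well order under $<_{\kb(\mathbb N)}$) and construct a branch $g=h\times f$ of $D^n_\psi(X)$ by letting $h$ be the characteristic function of $Z$ and $f(i):=i$ whenever $i\in\mathcal T_Z^n$, with any fixed value of $X$ elsewhere (or, in the degenerate case $\mathcal T_Z^n=\emptyset$, take $X:=1$ and arbitrary $f$, so that the tree condition is vacuous). The tree condition for $g[m]$ reduces to $i<_X j$ whenever $i,j\in\mathcal T_Z^n$ with $i<_{\kb(\mathbb N)}j$, which holds by the very definition of $X$. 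The infinite descending sequence $g[0]>_{\kb}g[1]>_{\kb}\dots$ then witnesses that $D^n_\psi(X)$ is not well-founded.

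The main bookkeeping subtlety is that natural numbers play two roles simultaneously: as positions $i<m$ within the sequence $f[m]$, and as codes of elements of $\mathbb N^{<\omega}$ belonging (or not) to $\mathcal T_Z^n$. One must check in both directions that for any finite set of $i\in\mathcal T_Z^n$ one can choose a single $m$ which both dominates them all as naturals and is large enough that they all appear in $\mathcal T_{Z[m]}^n$ — a matter of noting that $\mathcal T_{Z[m]}^n$ stabilizes on $\{t\in\mathcal T_Z^n:\len(t)\leq m\}$ as soon as $m\geq\len(i)$ for each $i$ concerned. Beyond this, and beyond spelling out the functoriality verifications, there is no real difficulty: the tree $D^n_\psi(X)$ is engineered so that its branches encode exactly the data of a set $Z$ together with an order embedding $\mathcal T_Z^n\hookrightarrow X$.
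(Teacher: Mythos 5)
Your proposal is correct and follows essentially the same route as the paper: part (a) by direct verification with the support condition handled via the pullback $t=f\circ t_0$, and part (b) by the correspondence between branches of $D^n_\psi(X)$ and pairs consisting of a set $Z$ and an order embedding $\mathcal T_Z^n\hookrightarrow X$, arguing both directions by contraposition. The only cosmetic difference is in the converse construction, where the paper takes $X:=\mathcal T_Z^n\cup\{\top\}$ with a fresh top element as default value while you take $X:=\mathcal T_Z^n$ with an arbitrary default (treating the empty case separately); both work because the tree condition leaves entries at indices outside $\mathcal T_s^n$ unconstrained.
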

\begin{proof}
(a) It is straightforward to check that $D^n_\psi$ is a monotone endofunctor of linear orders and that $\supp^n:D^n_\psi\To[\cdot]^{<\omega}$ is a natural transformation. To verify the support condition from part~(ii) of Definition~\ref{def:dilator}, we consider an embedding $f:X\to Y$ and an element $s\times t=s\times\langle t(0),\dots,t(m-1)\rangle\in D_n(Y)$ with
\begin{equation*}
\{t(0),\dots,t(m-1)\}=\supp^n_X(s\times t)\subseteq\rng(f).
\end{equation*}
Define $t'=\langle t'(0),\dots,t'(m-1)\rangle\in X^{<\omega}$ by stipulating $f(t'(i))=t(i)$ for $i<m$. To conclude $s\times t=D^n_\psi(f)(s\times t')\in\rng(D_n(f))$, it remains to check $s\times t'\in D^n_\psi(X)$. For this purpose, we consider indices $i,j<m$ with $i,j\in\mathcal T_s^n$ and $i<_{\kb(\mathbb N)}j$. In view of $s\times t\in D^n_\psi(Y)$ we have $t(i)<_Y t(j)$. The required inequality $t'(i)<_Xt'(j)$ follows because $f$ is an embedding.

(b) Both directions are established by contraposition. First assume that $\psi(n)$ fails. We can then consider a set~$Z\subseteq\mathbb N$ such that $\mathcal T_Z^n$ is well founded. Write $g:\mathbb N\to 2$ for the characteristic function of~$Z$, and put $X:=\mathcal T_Z^n\cup\{\top\}$ with a new maximal element~$\top$. Using the latter as a default value, we define $h:\mathbb N\to X$ by
\begin{equation*}
h(i):=\begin{cases}
i & \text{if $i$ codes an element of~$\mathcal T_Z^n\subseteq X$},\\
\top & \text{otherwise}.
\end{cases}
\end{equation*}
For $i,j\in\mathcal T_Z^n$ it is then trivial that $i<_{\kb(\mathbb N)}j$ implies $h(i)<_X h(j)$. One can conclude that $g\times h:\mathbb N\to 2\times X$ is a branch of~$D_\psi^n(X)$. Since $X$ is well founded while $D_\psi^n(X)$ is not, the predilator $D_\psi^n$ fails to be a dilator. To establish the converse, consider some well order~$X$ such that $D_\psi^n(X)$ is ill founded. We can then consider a branch $g\times h$ in $D_\psi^n(X)$. Let $Z\subseteq\mathbb N$ be the set with characteristic function~$g$. To conclude that $\psi(n)$ fails, we argue that $\mathcal T_Z^n$ is well founded because $h$ restricts to an embedding of~$\mathcal T_Z^n$ into the well order~$X$: Given sequences $i,j\in\mathcal T_Z^n$, we observe that $i,j\in\mathcal T_{g[m]}^n$ holds for sufficiently large~$m\in\mathbb N$ (above the length of~$i$ and $j$). Then $g[m]\times h[m]\in D_\psi^n(X)$ ensures that $i<_{\kb(\mathbb N)}j$ implies $h(i)<_Xh(j)$, as desired.
\end{proof}

Next, we construct a family of $2$-preptykes~$P_\psi^n$ such that $P_\psi^n$ is a $2$-ptyx (i.\,e.,~preserves dilators) precisely when the implication $\psi(n)\to\psi(n+1)$ holds. The construction is inspired by the one from Definition~\ref{def:dils-D_n} (which is due to Normann). It will be somewhat more technical, because we work at a higher type level; on the other hand, the fact that we are concerned with an implication between \mbox{$\Pi^1_2$-}statements (and not with a general $\Pi^1_3$-statement) allows for some simplifications.

Let us recall that the component $t\in X^{<\omega}$ of an element $s\times t\in D_\psi^n(X)$ encodes a partial embedding of $\mathcal T_Z^n$ (or rather of $\mathcal T_s^n$) into~$X$. To discuss partial embeddings on the next type level, we need some notation: For a predilator~$D$ we write
\begin{equation*}
\Sigma D:=\{(m,\sigma)\,|\,m\in\mathbb N\text{ and }\sigma\in D(m)\},
\end{equation*}
for $m=\{0,\dots,m-1\}$ with the usual linear order. To get an order on $\Sigma D$, we put
\begin{equation*}
(m,\sigma)<_{\Sigma D}(k,\tau)\quad:\Leftrightarrow\quad m<k\text{ or }(m=k\text{ and }\sigma<_{D(m)}\tau).
\end{equation*}
As before, we write $\Sigma D+1$ for the extension of $\Sigma D$ by a new maximal element $\top$ (which will, once again, serve as a default value).

\begin{definition}\label{def:partial-morph}
Consider predilators~$D$ and $E$. By a partial morphism $r:D\xRightarrow{p}E$ we mean a sequence $r=\langle r(0),\dots,r(\len(r)-1)\rangle\in(\Sigma E+1)^{<\omega}$ for which the following properties are satisfied:
\begin{enumerate}[label=(\roman*)]
\item Whenever $i<\len(r)$ is (the numerical code of) an element $(m,\sigma)\in\Sigma D$, we have $r(i)=(m,\rho)$ for some $\rho\in E(m)$, with the same first component~$m$.
\end{enumerate}
To formulate the other conditions, we assume that~(i) holds. For $(m,\sigma)\in\Sigma D$ with code $i<\len(r)$, we then define $\nu^r_m(\sigma)\in E(m)$ by stipulating $r(i)=(m,\nu^r_m(\sigma))$. We say that $\nu^r_m(\sigma)$ is undefined when $(m,\sigma)$ does not lie in $\Sigma D$ or has code~$i\geq\len(r)$.
\begin{enumerate}[label=(\roman*)]\setcounter{enumi}{1}
\item When the values $\nu^r_m(\sigma)$ and $\nu^r_m(\tau)$ are defined, we demand
\begin{equation*}
\sigma<_{D(m)}\tau\quad\Rightarrow\quad\nu^r_m(\sigma)<_{E(m)}\nu^r_m(\tau).
\end{equation*}
\item If $f:m=\{0,\dots,m-1\}\to \{0,\dots,k-1\}=k$ is an embedding, we require
\begin{equation*}
E(f)(\nu^r_m(\sigma))=\nu^r_k(D(f)(\sigma))
\end{equation*}
whenever the values $\nu^r_m(\sigma)$ and $\nu^r_k(D(f)(\sigma))$ are defined.
\end{enumerate}
\end{definition}

To avoid confusion, we explicitly state that both $r(i)\in\Sigma E$ and $r(i)=\top$ is permitted when $i<\len(r)$ does not code an element of~$\Sigma D$. Let us observe that condition~(iii) is void for all but finitely many functions $f$, since there are only finitely many numbers~$m\in\mathbb N$ such that $\nu^r_m(\sigma)$ is defined for some $\sigma\in D(m)$. As a consequence, the notion of partial morphism is $\Delta^0_1$-definable. The following is straightforward but important:

\begin{lemma}
Consider a partial morphism $r:D\xRightarrow{p}E$. For $k\leq\len(r)$, the initial segment $r[k]:D\xRightarrow{p}E$ is also a partial morphism, and we have $\nu^{r[k]}_m(\sigma)=\nu^r_m(\sigma)$ whenever $\nu^{r[k]}_m(\sigma)$ is defined (note that $\nu^{r}_m(\sigma)$ is also defined in this case).
\end{lemma}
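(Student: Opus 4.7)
The plan is to unfold the definition of partial morphism for the restricted sequence $r[k]$ and observe that each of its three defining clauses follows directly from the corresponding clause for $r$, since the restriction does not introduce any new indices where the conditions must be checked.

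First, I would dispose of the identification of the partial evaluation maps. If $\nu^{r[k]}_m(\sigma)$ is defined, then by definition the pair $(m,\sigma) \in \Sigma D$ has some code $i < \len(r[k]) = k$. Since $k \leq \len(r)$, the same code satisfies $i < \len(r)$, so $\nu^r_m(\sigma)$ is defined as well; moreover $r[k](i) = r(i)$ by the very definition of initial segment, so the uniquely determined second component in the pair $r[k](i) = (m, \nu^{r[k]}_m(\sigma))$ coincides with that of $r(i) = (m, \nu^r_m(\sigma))$. This yields the equation $\nu^{r[k]}_m(\sigma) = \nu^r_m(\sigma)$ and, as a byproduct, establishes clause~(i) of Definition~\ref{def:partial-morph} for~$r[k]$: any index $i<k$ that codes an element $(m,\sigma) \in \Sigma D$ inherits from $r(i)$ the requirement $r[k](i) = (m,\rho)$ with $\rho \in E(m)$.

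With this identification in hand, clauses~(ii) and~(iii) for $r[k]$ reduce immediately to the corresponding clauses for $r$: if all relevant values $\nu^{r[k]}_{(-)}(-)$ appearing in an instance of~(ii) or~(iii) are defined, then the same values seen as $\nu^r_{(-)}(-)$ are defined and equal, so the required monotonicity respectively naturality inequality/equation for $r$ transports verbatim to $r[k]$. I expect no real obstacle here; the only mild subtlety is the bookkeeping around the phrase ``whenever the values are defined'', which is why it is important to note explicitly at the outset that definedness for $r[k]$ implies definedness for $r$ (but not conversely). The argument is entirely formal and goes through in $\mathbf{RCA}_0$.
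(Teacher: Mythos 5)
Your proof is correct and is exactly the straightforward verification the paper has in mind (the paper states this lemma without proof, calling it ``straightforward''). The key observation — that definedness of $\nu^{r[k]}_m(\sigma)$ forces the code of $(m,\sigma)$ below $k\leq\len(r)$, whence $r[k](i)=r(i)$ and the two evaluation maps agree wherever the restricted one is defined — is precisely what makes clauses (i)--(iii) of Definition~\ref{def:partial-morph} transfer from $r$ to $r[k]$.
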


In order to define the preptyx~$P_\psi^n$, we must specify an endofunctor of predilators and a natural family of support functions. The following definition explains the action of the endofunctor~$P_\psi^n$ on objects, i.\,e., on~predilators.

\begin{definition}\label{def:Pn_objects}
Consider a predilator~$E$. For each order~$X$, we define $P_\psi^n(E)(X)$ as the tree of all sequences $r\times s\times t\in ((\Sigma E+1)\times 2\times X)^{<\omega}$ such that $r:D_\psi^n\xRightarrow{p} E$ is a partial morphism and we have $s\times t\in D_\psi^{n+1}(X)$. Given an embedding $f:X\to Y$, we define $P_\psi^n(E)(f):P_\psi^n(E)(X)\to P_\psi^n(E)(Y)$ by
\begin{equation*}
P_\psi^n(E)(f)(r\times s\times t):=r\times D_\psi^{n+1}(f)(s\times t).
\end{equation*}
To define functions $\supp^{n,E}_X:P_\psi^n(E)(X)\to[X]^{<\omega}$, we set
\begin{equation*}
\supp^{n,E}_X(r\times s\times t):=\supp^{n+1}_X(s\times t),
\end{equation*}
where $\supp^{n+1}_X:D_\psi^{n+1}(X)\to[X]^{<\omega}$ is the support function from Definition~\ref{def:dils-D_n}.
\end{definition}

We recall that the arguments and values of our preptykes are coded predilators in the sense of Definition~\ref{def:coded-predil}, rather than class-sized predilators in the sense of Definition~\ref{def:dilator}. This is important for foundational reasons but has few practical implications, as the two variants of predilators form equivalent categories (see the first part of Section~\ref{sect:cat-dilators}). In Definition~\ref{def:dils-D_n} we have specified $D_\psi^n(X)$ for an arbitrary order~$X$, which means that we have defined~$D_\psi^n$ as a class-sized predilator. We will also write $D_\psi^n$ (rather than $D_\psi^n\!\restriction\!\lo_0$) for the coded restriction given by Lemma~\ref{lem:class-to-coded-predil}.

Assuming that $P_\psi^n$ preserves dilators, part~(b) of the following proposition entails that $D_\psi^{n+1}$ is a dilator if the same holds for~$D_\psi^n$. In view of Proposition~\ref{prop:D_n-psi}, this amounts to the implication $\psi(n)\to\psi(n+1)$.

\begin{proposition}\label{prop:P-n_objects}
The following holds for any $n\in\mathbb N$:
\begin{enumerate}[label=(\alph*)]
\item if $E$ is a predilator, then so is $P_\psi^n(E)$,
\item there is a morphism $\zeta^n:D_\psi^{n+1}\To P_\psi^n(D_\psi^n)$ of predilators.
\end{enumerate}
\end{proposition}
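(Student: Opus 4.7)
My plan for part~(a) is to reduce nearly everything to the fact that $D_\psi^{n+1}$ is a predilator (Proposition~\ref{prop:D_n-psi}). The action of~$P_\psi^n(E)$ on morphisms and its support function touch only the $s\times t$-component of an element $r\times s\times t\in P_\psi^n(E)(X)$, while the condition that $r:D_\psi^n\xRightarrow{p}E$ be a partial morphism is independent of~$X$ (Definition~\ref{def:partial-morph} makes no reference to~$X$). Functoriality and naturality of $\supp^{n,E}$ will thus be inherited from $D_\psi^{n+1}$; monotonicity will follow because an inequality $f\leq g$ gives componentwise inequalities $(r(i),s(i),f(t(i)))\leq(r(i),s(i),g(t(i)))$, which pass to the Kleene-Brouwer order; and the support condition will follow by taking a preimage $s\times t'$ of $s\times t$ under $D_\psi^{n+1}(f)$---supplied by the support condition for~$D_\psi^{n+1}$---and pairing it with the same~$r$.

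For part~(b), I plan to define $\zeta^n_X:D_\psi^{n+1}(X)\to P_\psi^n(D_\psi^n)(X)$ by the formula
\begin{equation*}
\zeta^n_X(s\times t):=r^{\len(s)}\times s\times t,
\end{equation*}
where $r^N\in(\Sigma D_\psi^n+1)^{<\omega}$ is the ``identity'' sequence of length~$N$ whose $i$-th entry (for $i<N$) is $(m,\sigma)$ when $i$ codes some element $(m,\sigma)\in\Sigma D_\psi^n$ and $\top$ otherwise. The key observation is that $r^N:D_\psi^n\xRightarrow{p}D_\psi^n$ is a partial morphism: condition~(i) of Definition~\ref{def:partial-morph} holds by construction, so that the induced $\nu^{r^N}_m$ coincides with the identity on its domain of definition; conditions~(ii) and~(iii) then reduce to trivialities. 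Hence $\zeta^n_X(s\times t)$ really belongs to $P_\psi^n(D_\psi^n)(X)$.

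It will then remain to verify that $\zeta^n$ is a morphism of predilators. Naturality in~$X$ is immediate: for $f:X\to Y$, both $\zeta^n_Y\circ D_\psi^{n+1}(f)$ and $P_\psi^n(D_\psi^n)(f)\circ\zeta^n_X$ leave $r^{\len(s)}\times s$ alone and apply $f$ pointwise to~$t$. For the order-embedding property, the essential point is that $r^N$ is defined \emph{uniformly} in~$N$: whenever $N\leq N'$, the sequence $r^N$ is the length-$N$ initial segment of $r^{N'}$. Consequently, in any Kleene-Brouwer comparison between $\zeta^n_X(s\times t)$ and $\zeta^n_X(s'\times t')$ the $r$-components never distinguish the two sequences, and the comparison reduces to the one between $s\times t$ and $s'\times t'$ in~$D_\psi^{n+1}(X)$. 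I do not expect a genuine obstacle: the most care-intensive step is the explicit bookkeeping around the partial-morphism conditions for $r^N$, but since the induced $\nu^{r^N}_m$ is literally the identity function, that verification is trivial.
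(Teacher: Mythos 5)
Your proposal is correct and follows essentially the same route as the paper: part~(a) is reduced to the corresponding facts for $D_\psi^{n+1}$, and your sequence $r^N$ is exactly the paper's $g[N]$, with $\zeta^n_X(s\times t):=g[\len(s)]\times s\times t$ and the same observations (identity-induced $\nu^{r^N}_m$, initial-segment coherence of the $r^N$) doing the work.
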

\begin{proof}
It is straightforward to check part~(a), based on the corresponding result from Proposition~\ref{prop:D_n-psi}. To establish part~(b), we define $g:\mathbb N\to\Sigma D_\psi^n+1$ by
\begin{equation*}
g(i):=\begin{cases}
(m,\sigma) & \text{if $i$ is the numerical code of~$(m,\sigma)\in\Sigma D_\psi^n$},\\
\top & \text{if $i$ does not code an element of $\Sigma D_\psi^n$}.
\end{cases}
\end{equation*} 
Then $g[k]:D_\psi^n\xRightarrow{p}D_\psi^n$ is a partial morphism for each $k\in\mathbb N$ (with $\nu^{g[k]}_m(\sigma)=\sigma$ when\-ever the value is defined). For each linear order~$X$, we can thus define a function $\zeta^n_X:D_\psi^{n+1}(X)\to P_\psi^n(D_\psi^n)(X)$ by setting
\begin{equation*}
\zeta^n_X(s\times t):=g[k]\times s\times t\quad\text{for $k:=\len(s)=\len(t)$}.
\end{equation*}
It is straightforward to verify that this yields a natural family of order embeddings, i.\,e.,~a morphism of predilators.
\end{proof}

In order to extend $P^n_\psi$ into a functor, we need to define its action on a (total) morphism $\mu:E_0\To E_1$. For $r\times s\times t\in P^n_\psi(E_0)(X)$ with $r:D_\psi^n\xRightarrow{p} E_0$, the first component of $P_\psi^n(\mu)_X(r\times s\times t)\in P_\psi^n(E_1)(X)$ should be a partial morphism from $D_\psi^n$ to~$E_1$. To obtain such a morphism, we compose $r$ and $\mu$ in the following way.

\begin{definition}\label{def:compose-partial-total}
Consider a (total) morphism $\mu:E_0\To E_1$ of predilators and a sequence $r=\langle r(0),\dots,r(k-1)\rangle\in(\Sigma E_0+1)^{<\omega}$. For $i<k$ we define
\begin{equation*}
\mu\circ r(i):=\begin{cases}
(m,\mu_m(\tau)) & \text{if $r(i)=(m,\tau)\in\Sigma E_0$},\\
\top & \text{if $r(i)=\top$}.
\end{cases}
\end{equation*}
We then set $\mu\circ r:=\langle \mu\circ r(0),\dots,\mu\circ r(k-1)\rangle\in(\Sigma E_1+1)^{<\omega}$.
\end{definition}

Let us verify basic properties of our construction:

\begin{lemma}\label{lem:compose-partial-morphs}
Consider a predilator~$D$, a (total) morphism $\mu:E_0\To E_1$ of predilators, and a sequence $r\in(\Sigma E_0+1)^{<\omega}$. Then $r:D\xRightarrow{p} E_0$ is a partial morphism if, and only if, the same holds for $\mu\circ r:D\xRightarrow{p} E_1$. Assuming that these equivalent statements hold, we have
\begin{equation*}
\nu_m^{\mu\circ r}(\sigma)=\mu_m(\nu_m^r(\sigma))
\end{equation*}
whenever the values $\nu_m^r(\sigma)$ and $\nu_m^{\mu\circ r}(\sigma)$ are defined.
\end{lemma}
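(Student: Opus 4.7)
My plan is to verify the three defining conditions of Definition~\ref{def:partial-morph} in parallel for $r$ and $\mu\circ r$, exploiting that each component $\mu_m$ is an order embedding (hence injective and strictly increasing) and that $\mu$ is natural as a transformation $E_0\To E_1$.

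First I would observe from Definition~\ref{def:compose-partial-total} that $\mu\circ r$ has the same length as $r$ and that $(\mu\circ r)(i)$ has first component $m$ precisely when $r(i)$ does (both cases $r(i)=(m,\tau)\in\Sigma E_0$ and $r(i)=\top$ preserve this property). This means that clause~(i) of Definition~\ref{def:partial-morph} holds for $r$ if and only if it holds for $\mu\circ r$, and that the sets of pairs $(m,\sigma)\in\Sigma D$ for which $\nu^r_m(\sigma)$ and $\nu^{\mu\circ r}_m(\sigma)$ are defined coincide (since ``defined'' depends only on $D$ and the common length). Assuming clause~(i), reading off the definitions of $\nu^r$ and $\nu^{\mu\circ r}$ from the $i$-th entry of the respective sequence gives directly
\begin{equation*}
\nu^{\mu\circ r}_m(\sigma)=\mu_m(\nu^r_m(\sigma))
\end{equation*}
on this common domain, which is already the final equation asserted by the lemma.

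Next I would translate clauses~(ii) and~(iii) using this equation. For clause~(ii), since $\mu_m$ both preserves and reflects the order on $E_0(m)$, the inequality $\nu^r_m(\sigma)<_{E_0(m)}\nu^r_m(\tau)$ is equivalent to $\mu_m(\nu^r_m(\sigma))<_{E_1(m)}\mu_m(\nu^r_m(\tau))$, i.\,e.\ to $\nu^{\mu\circ r}_m(\sigma)<_{E_1(m)}\nu^{\mu\circ r}_m(\tau)$. For clause~(iii), the naturality square $E_1(f)\circ\mu_m=\mu_k\circ E_0(f)$ together with injectivity of $\mu_k$ shows that the equation $E_0(f)(\nu^r_m(\sigma))=\nu^r_k(D(f)(\sigma))$ holds if and only if $E_1(f)(\nu^{\mu\circ r}_m(\sigma))=\nu^{\mu\circ r}_k(D(f)(\sigma))$ holds; applying $\mu_k$ to an instance of the first equation produces an instance of the second, and injectivity of $\mu_k$ lets us run the argument in reverse.

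I do not expect a substantive obstacle here; the lemma is essentially a bookkeeping result recording how the functoriality and naturality of $\mu$ interact with the partial-morphism conditions. The only point requiring mild care is the handling of ``undefined'' values: because $\len(\mu\circ r)=\len(r)$, the hypotheses that the relevant $\nu$-values are defined transfer verbatim between $r$ and $\mu\circ r$, which is exactly what enables the equivalences in~(ii) and~(iii) to run in both directions.
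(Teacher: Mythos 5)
Your proposal is correct and follows essentially the same route as the paper's proof: establish that clause~(i) and the domain of definition transfer between $r$ and $\mu\circ r$, read off the equation $\nu^{\mu\circ r}_m(\sigma)=\mu_m(\nu^r_m(\sigma))$ directly from the definition, and then use the fact that each $\mu_m$ is an order embedding for clause~(ii) and the naturality of $\mu$ together with injectivity of $\mu_k$ for both directions of clause~(iii). No gaps.
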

\begin{proof}
Invoking the definition of $\mu\circ r$, it is straightforward to see that condition~(i) of Definition~\ref{def:partial-morph} holds for~$r$ if, and only if, it holds for~$\mu\circ r$. In the following we assume that~$r$ and~$\mu\circ r$ do indeed satisfy condition~(i). We can then consider the values $\nu_m^r(\sigma)$ and $\nu_m^{\mu\circ r}(\sigma)$; note that they are defined for the same arguments, namely for $(m,\sigma)\in\Sigma D$ with code below $\len(r)=\len(\mu\circ r)$. The definition of $\mu\circ r$ does also reveal that $\nu_m^{\mu\circ r}(\sigma)=\mu_m(\nu_m^r(\sigma))$ holds whenever the relevant values are defined. Since the components $\mu_m:E_0(m)\to E_1(m)$ are order embeddings, it is straightforward to conclude that condition~(ii) of Definition~\ref{def:partial-morph} holds for~$r$ if, and only if, it holds for~$\mu\circ r$. In order to establish the same for condition~(iii), we consider an embedding $f:m\to k$ and assume that the values $\nu^{r}_m(\sigma)$ and $\nu^{r}_k(D(f)(\sigma))$ are defined. If condition~(iii) holds for~$r$, we can use the naturality of~$\mu$ to get
\begin{multline*}
\nu^{\mu\circ r}_k(D(f)(\sigma))=\mu_k(\nu^r_k(D(f)(\sigma)))=\mu_k(E_0(f)(\nu^r_m(\sigma)))=\\
=E_1(f)(\mu_m(\nu^r_m(\sigma)))=E_1(f)(\nu^{\mu\circ r}_m(\sigma)),
\end{multline*}
as needed to show that condition~(iii) holds for~$\mu\circ r$. Assuming the latter, the same equalities show $\mu_k(\nu^r_k(D(f)(\sigma)))=\mu_k(E_0(f)(\nu^r_m(\sigma)))$. Since $\mu_k$ is injective, we can conclude $\nu^r_k(D(f)(\sigma))=E_0(f)(\nu^r_m(\sigma))$, as required by condition~(iii) for~$r$.
\end{proof}

To turn~$P_\psi^n$ into a preptyx, we also need to define support functions
\begin{equation*}
\Supp^n_{E,X}:P_\psi^n(E)(X)\to[\tr(E)]^{<\omega}
\end{equation*}
as in Lemmas~\ref{lem:ptyx-support-variant} and~\ref{lem:ptyx-support-variant-back}. The support of an element $r\times s\times t\in P_\psi^n(E)(X)$ will depend on the entries $r(i)\in\Sigma E+1$ with~$r(i)\neq\top$ and hence $r(i)=(m,\tau)\in\Sigma E$ with $m\in\mathbb N$ and $\tau\in E(m)$. By the proof of Theorem~\ref{thm:class-coded-equiv}, we have a unique normal form $\tau\nf E(\iota_a^m\circ\en_a)(\tau_0)$ with $a\subseteq m=\{0,\dots,m-1\}$ and $(|a|,\tau_0)\in\tr(E)$.

\begin{definition}\label{def:Pn-ptyx}
Consider a (total) morphism $\mu:E_0\To E_1$ of predilators. For each linear order~$X$, we define a function $P_\psi^n(\mu)_X:P_\psi^n(E_0)(X)\to P_\psi^n(E_1)(X)$ by
\begin{equation*}
P_\psi^n(\mu)_X(r\times s\times t):=(\mu\circ r)\times s\times t.
\end{equation*}
To define a family of support functions, we set
\begin{multline*}
\Supp^n_{E,X}(r\times s\times t)=\{(|a|,\tau_0)\,|\,\text{there is an $i<\len(r)$ with}\\
\text{$r(i)=(m,\tau)\in\Sigma E$ and $\tau\nf E(\iota_a^m\circ\en_a)(\tau_0)$}\}
\end{multline*}
for each order~$X$ and each element $r\times s\times t\in P_\psi^n(E)(X)$.
\end{definition}

To complete our construction of~$P_\psi^n$, we establish the following properties, which have been promised above.

\begin{theorem}\label{thm:ptykes-ind-step}
For each $n\in\mathbb N$, we have the following:
\begin{enumerate}[label=(\alph*)]
\item the constructions from Definitions~\ref{def:Pn_objects} and~\ref{def:Pn-ptyx} yield a preptyx $P_\psi^n$,
\item the preptyx $P_\psi^n$ is a ptyx if, and only if, we have $\psi(n)\to\psi(n+1)$.
\end{enumerate}
\end{theorem}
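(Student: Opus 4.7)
\medskip

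\noindent\textbf{Proof plan.} For part~(a), I would first check that $P^n_\psi$ is a functor and that $\Supp^n_{E,X}$ is a natural family of support functions satisfying the support condition; the latter is the essential content. The easy items are as follows. Since $P^n_\psi(E)$ is already known to be a predilator by Proposition~\ref{prop:P-n_objects}(a), it suffices to check that $P^n_\psi(\mu)\colon P^n_\psi(E_0)\To P^n_\psi(E_1)$ is a morphism for every $\mu\colon E_0\To E_1$. The components $P^n_\psi(\mu)_X$ are clearly natural in~$X$; they are order embeddings because the KB~order is determined by lexicographic comparisons of individual entries, and the map $(m,\tau)\mapsto(m,\mu_m(\tau))$ (extended by $\top\mapsto\top$) is lex-preserving since each $\mu_m$ is an embedding. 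That $P^n_\psi(\mu)$ sends values into~$P^n_\psi(E_1)(X)$ follows from the partial-to-total composition Lemma~\ref{lem:compose-partial-morphs}. Functoriality ($P^n_\psi(\mathrm{id})=\mathrm{id}$ and $P^n_\psi(\mu\circ\mu')=P^n_\psi(\mu)\circ P^n_\psi(\mu')$) is immediate from the definition of $\mu\circ r$. For naturality of $\Supp^n$ in~$E$, one uses that if $\tau\nf E_0(\iota_a^m\circ\en_a)(\tau_0)$ then $\mu_m(\tau)\nf E_1(\iota_a^m\circ\en_a)(\mu_{|a|}(\tau_0))$ (by naturality of $\mu$ and Lemma~\ref{lem:transfos-respect-supp}), so the value $(|a|,\tau_0)$ is sent to $\tr(\mu)(|a|,\tau_0)$, as required.

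The support condition is the main content of~(a). Given $\mu\colon E_0\To E_1$ and $r\times s\times t\in P^n_\psi(E_1)(X)$ with $\Supp^n_{E_1,X}(r\times s\times t)\subseteq\rng(\mu)$, I construct a preimage $r'\times s\times t\in P^n_\psi(E_0)(X)$. Entry by entry: set $r'(i):=\top$ if $r(i)=\top$; otherwise write $r(i)=(m,\tau)$ with $\tau\nf E_1(\iota_a^m\circ\en_a)(\tau_0)$, use the hypothesis to pick $\tau_0'\in E_0(|a|)$ with $\mu_{|a|}(\tau_0')=\tau_0$ (and $(|a|,\tau_0')\in\tr(E_0)$ by Lemma~\ref{lem:transfos-respect-supp}), and set $r'(i):=(m,E_0(\iota_a^m\circ\en_a)(\tau_0'))$. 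By naturality of $\mu$ we get $\mu\circ r'=r$, so Lemma~\ref{lem:compose-partial-morphs} upgrades $r'$ to a partial morphism $D^n_\psi\xRightarrow{p}E_0$ and $r'\times s\times t\in P^n_\psi(E_0)(X)$ maps to $r\times s\times t$ under $P^n_\psi(\mu)_X$.

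For part~(b), the forward direction is immediate: if $P^n_\psi$ is a ptyx and $\psi(n)$ holds, then $D^n_\psi$ is a dilator by Proposition~\ref{prop:D_n-psi}(b), hence so is $P^n_\psi(D^n_\psi)$; the morphism $\zeta^n\colon D^{n+1}_\psi\To P^n_\psi(D^n_\psi)$ of Proposition~\ref{prop:P-n_objects}(b) then embeds each $D^{n+1}_\psi(X)$ into a well order, so $D^{n+1}_\psi$ is a dilator and $\psi(n+1)$ holds. For the converse, assume $\psi(n)\to\psi(n+1)$, let $E$ be a dilator and $X$ a well order, and suppose for contradiction that $P^n_\psi(E)(X)$ is ill founded. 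Since $E$ is a dilator, $\Sigma E$ is a well order, so $(\Sigma E{+}1)\times 2\times X$ carries a well founded lex order and the tree $P^n_\psi(E)(X)$ has an infinite branch $\tilde r\times\tilde s\times\tilde t$. The ``$\tilde s\times\tilde t$'' part is a branch of $D^{n+1}_\psi(X)$, so the argument in Proposition~\ref{prop:D_n-psi}(b) produces~$Z$ with $\mathcal T_Z^{n+1}$ embedded into~$X$, yielding $\neg\psi(n+1)$. The ``$\tilde r$'' part, by compatibility of $\nu^{r[k]}$ with extensions, defines a morphism $\nu\colon D^n_\psi\!\restriction\!\lo_0\To E\!\restriction\!\lo_0$; passing to class-sized predilators via Lemma~\ref{lem:morphism-dils-extend} and Theorem~\ref{thm:class-coded-equiv} gives an embedding $D^n_\psi(Y)\hookrightarrow E(Y)$ for every well order~$Y$, so $D^n_\psi$ is a dilator and $\psi(n)$ holds---contradicting $\neg\psi(n+1)$ via the hypothesis.

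The main obstacle, I expect, is keeping the bookkeeping of partial morphisms, normal forms, and the coded/class-sized distinction straight in the support condition verification and in the extraction of the morphism~$\nu$ from the branch; both rely essentially on Lemma~\ref{lem:transfos-respect-supp} and on the equivalence of the two categories of predilators developed in Section~\ref{sect:cat-dilators}.
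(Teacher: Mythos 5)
Your proposal is correct. Part~(a) and the forward direction of~(b) follow the paper's own proof essentially verbatim: the entry-by-entry construction of the preimage $r'$ for the support condition, the use of Lemma~\ref{lem:compose-partial-morphs} to certify that $r'$ and $\mu\circ r$ are partial morphisms, the normal-form computation for naturality of $\Supp^n$ in~$E$, and the deduction of $\psi(n+1)$ via the embedding $\zeta^n$ all match.

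Where you genuinely diverge is the converse of~(b). The paper's ``other direction'' assumes $\psi(n)\wedge\neg\psi(n+1)$ and exhibits the branch $g\times h_0\times h_1$ of $P^n_\psi(D^n_\psi)(X)$, concluding that $P^n_\psi$ is not a ptyx; as written, this only yields the contrapositive of the ``only if'' half, whereas the half actually invoked later (in Theorem~\ref{thm:fixed-point-to-induction}) is $(\psi(n)\to\psi(n+1))\Rightarrow P^n_\psi\text{ is a ptyx}$, which requires controlling $P^n_\psi(E)$ for an \emph{arbitrary} dilator~$E$, not just for $E=D^n_\psi$. Your argument supplies exactly this: from ill-foundedness of $P^n_\psi(E)(X)$ you extract a branch (legitimate because $\Sigma E+1$ is a well order when $E$ is a dilator, so the Kleene--Brouwer order on the tree is ill founded only via a branch), read off $\neg\psi(n+1)$ from the $2\times X$-component, and---this is the extra ingredient the paper's special case does not need, since there the first component is essentially the identity---assemble the $\tilde r$-component into a total morphism $D^n_\psi\!\restriction\!\lo_0\To E\!\restriction\!\lo_0$, whence $\psi(n)$ and a contradiction with the hypothesis. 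Conditions~(i)--(iii) of Definition~\ref{def:partial-morph}, applied to the initial segments $\tilde r[k]$, give totality, order preservation and naturality of that limit morphism, and Lemma~\ref{lem:morphism-dils-extend} together with Theorem~\ref{thm:class-coded-equiv} transfers the conclusion to well-foundedness of $D^n_\psi(Y)$ for every well order~$Y$. So your route is not merely different bookkeeping: it is the decomposition that actually delivers the ``if'' direction of the stated equivalence.
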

\begin{proof}
(a) From Proposition~\ref{prop:P-n_objects} we know that $P_\psi^n$ maps predilators to predilators. Let us now consider its action on a morphism $\mu:E_0\To E_1$. Since each component $\mu_m:E_0(m)\to E_1(m)$ is an order embedding, the same holds for the map
\begin{equation*}
\Sigma E_0\ni(m,\sigma)\mapsto(m,\mu_m(\sigma))\in\Sigma E_1.
\end{equation*}
In view of Definition~\ref{def:compose-partial-total}, one readily infers that the components $P_\psi^n(\mu)_X$ are order embeddings as well (recall that $P_\psi^n(E_i)(X)\subseteq((\Sigma E_i+1)\times 2\times X)^{<\omega}$ carries the Kleene-Brouwer order with respect to the lexicographic order on the product). To conclude that $P_\psi^n(\mu)$ is a morphism of predilators, we consider an order embedding $f:X\to Y$ and verify the naturality property
\begin{multline*}
P_\psi^n(\mu)_Y\circ P_\psi^n(E_0)(f)(r\times s\times t)=P_\psi^n(\mu)_Y(r\times D_\psi^{n+1}(f)(s\times t))=\\
=(\mu\circ r)\times D_\psi^{n+1}(f)(s\times t)=P_\psi^n(E_1)(f)((\mu\circ r)\times s\times t)=\\
=P_\psi^n(E_1)(f)\circ P_\psi^n(\mu)_X(r\times s\times t).
\end{multline*}
The claim that $P_\psi^n$ is a functor reduces to $\mu^1\circ(\mu^0\circ r)=(\mu^1\circ\mu^0)\circ r$ (for partial and total morphisms of suitable (co-)domain), which is readily verified.   To show that $P_\psi^n$ is a preptyx, we use the criterion from Lemma~\ref{lem:ptyx-support-variant-back}. Let us first verify the naturality condition
\begin{equation*}
\Supp^n_{E_1,X}\circ P_\psi^n(\mu)_X(r\times s\times t)=[\tr(\mu)]^{<\omega}\circ\Supp^n_{E_0,X}(r\times s\times t)
\end{equation*}
with respect to a morphism $\mu:E_0\To E_1$. For an arbitrary element $\tr(\mu)(|a|,\tau_0)$ of the right side, there is an index $i<\len(r)$ such that~$r(i)$ has the form $(m,\tau)\in\Sigma E_0$ with $\tau\nf E_0(\iota_a^m\circ\en_a)(\tau_0)$. We observe
\begin{equation*}
\mu_m(\tau)=\mu_m(E_0(\iota_a^m\circ\en_a)(\tau_0))=E_1(\iota_a^m\circ\en_a)(\mu_{|a|}(\tau_0)).
\end{equation*}
In view of $(|a|,\mu_{|a|}(\tau_0))=\tr(\mu)(|a|,\tau_0)\in\tr(E_1)$ (cf.~Definition~\ref{def:trace-morphism}, which relies on Lemma~\ref{lem:transfos-respect-supp}), this equation yields the normal form of $\mu_m(\tau)\in E_1(m)$. Together with $\mu\circ r(i)=(m,\mu_m(\tau))$, it follows that $\tr(\mu)(|a|,\tau_0)$ lies in the set
\begin{equation*}
\Supp^n_{E_1,X}((\mu\circ r)\times s\times t)=\Supp^n_{E_1,X}\circ P_\psi^n(\mu)_X(r\times s\times t).
\end{equation*}
Now consider any element $(|b|,\rho_0)$ of the latter. For some $i<\len(\mu\circ r)=\len(r)$, we have~$\mu\circ r(i)=(m,\rho)\in\Sigma E_1$ with $\rho\nf E_1(\iota_b^m\circ\en_b)(\rho_0)$. This yields $r(i)=(m,\tau)$ with $\mu_m(\tau)=\rho$. Writing $\tau\nf E_0(\iota_a^m\circ\en_a)(\tau_0)$, we get
\begin{equation*}
\rho=\mu_m(\tau)\nf E_1(\iota_a^m\circ\en_a)(\mu_{|a|}(\tau_0))
\end{equation*}
as above. The uniqueness of normal forms (see the proof of Theorem~\ref{thm:class-coded-equiv}) entails that we have $(b,\rho_0)=(a,\mu_{|a|}(\tau_0))$. We can conclude
\begin{equation*}
(|b|,\rho_0)=(|a|,\mu_{|a|}(\tau_0))=\tr(\mu)(|a|,\tau_0)\in [\tr(\mu)]^{<\omega}\circ\Supp^n_{E_0,X}(r\times s\times t).
\end{equation*}
The naturality property $\Supp^n_{E,Y}\circ P_\psi^n(E)(f)=\Supp^n_{E,X}$ with respect to an order embedding $f:X\to Y$ is straightforward (and in fact automatic, cf.~the discussion after Lemma~\ref{lem:ptyx-support-variant-back}). It remains to verify the support condition
\begin{equation*}
\{r\times s\times t\in P_\psi^n(E_1)(X)\,|\,\Supp_{E_1,X}^n(r\times s\times t)\subseteq\rng(\mu)\}\subseteq\rng(P_\psi^n(\mu)_X)
\end{equation*}
for a morphism $\mu:E_0\To E_1$. Given an arbitrary element $r\times s\times t$ of the left side, we construct $r'\in(\Sigma E_0+1)^{<\omega}$ with $\len(r')=\len(r)$ as follows: For each $i<\len(r)$ with $r(i)=\top$, we set $r'(i):=\top$. In the case of $r(i)=(m,\rho)\in\Sigma E_1$, we consider the unique normal form $\rho\nf E_1(\iota_a^m\circ\en_a)(\rho_0)$ and observe
\begin{equation*}
(|a|,\rho_0)\in\Supp_{E_1,X}^n(r\times s\times t)\subseteq\rng(\mu).
\end{equation*}
This allows us to write $\rho_0=\mu_{|a|}(\tau_0)$, where $\tau_0\in E_0(|a|)$ is unique since $\mu_{|a|}$ is an embedding. We now set $\tau:=E_0(\iota_a^m\circ\en_a)(\tau_0)\in E_0(m)$ and $r'(i):=(m,\tau)\in\Sigma E_0$. Observe that we have $\mu_m(\tau)=\rho$ and hence $\mu\circ r'=r$. Crucially, the equivalence from Lemma~\ref{lem:compose-partial-morphs} ensures that $r':D_\psi^n\xRightarrow{p} E_0$ is a partial morphism. This entails that we have $r'\times s\times t\in P_\psi^n(E_0)(X)$, so that we indeed get
\begin{equation*}
r\times s\times t=(\mu\circ r')\times s\times t=P_\psi^n(\mu)_X(r'\times s\times t)\in\rng(P_\psi^n(\mu)_X).
\end{equation*}

(b) For the first direction, we assume that $P_\psi^n$ is a ptyx and that $\psi(n)$ holds. By Proposition~\ref{prop:D_n-psi}, the latter entails that $D_\psi^n$ is a dilator. Since $P_\psi^n$ is a ptyx, it follows that $P_\psi^n(D_\psi^n)$ is a dilator as well. Invoking the morphism from Proposition~\ref{prop:P-n_objects}, we can conclude that $D_\psi^{n+1}$ is a dilator: For each well order~$X$, the component
\begin{equation*}
\zeta^n_X:D_\psi^{n+1}(X)\to P_\psi^n(D_\psi^n)(X)
\end{equation*}
is an order embedding, which ensures that~$D_\psi^{n+1}(X)$ is well founded. By the other direction of Proposition~\ref{prop:D_n-psi}, it follows that $\psi(n+1)$ holds, as required. To be precise, one needs to switch back and forth between class-sized and coded dilators; this is straightforward with the help of Theorem~\ref{thm:class-coded-equiv}, Corollary~\ref{cor:class-coded-dil} and Lemma~\ref{lem:morphism-dils-extend}. To establish the other direction, we assume that $\psi(n)$ holds while $\psi(n+1)$ fails. Then $D_\psi^n$ is a dilator while $D_\psi^{n+1}(X)$ is ill founded for some well order~$X$. To conclude that $P_\psi^n$ fails to be a ptyx, it suffices to show that $P_\psi^n(D_\psi^n)(X)$ is ill founded. The proof of Proposition~\ref{prop:D_n-psi} provides a function $g:\mathbb N\to\Sigma D_\psi^n+1$ such that $g[k]:D_\psi^n\xRightarrow{p} D_\psi^n$ is a partial morphism for each~$k\in\mathbb N$. We can also pick a branch $h_0\times h_1:\mathbb N\to 2\times X$ of the ill founded tree $D_\psi^{n+1}(X)\subseteq (2\times X)^{<\omega}$. It follows that $g\times h_0\times h_1$ is a branch of the tree $P_\psi^n(D_\psi^n)(X)\subseteq((\Sigma D_\psi^n+1)\times 2\times X)^{<\omega}$, which is thus ill founded.
\end{proof}

As a final ingredient for the analysis of $\Pi^1_2$-induction, we discuss the pointwise sum of ptykes. Consider a family of preptykes~$P_z$, indexed by the elements of a linear order~$Z$. We define a preptyx~$P:=\sum_{z\in Z}P_z$ as follows: Given a predilator~$E$ and a linear order~$X$, we set
\begin{gather*}
P(E)(X):=\{(z,\sigma)\,|\,z\in Z\text{ and }\sigma\in P_z(E)(X)\},\\
(z,\sigma)<_{P(E)(X)}(z',\sigma')\quad:\Leftrightarrow\quad z<_Zz'\text{ or }(z=z'\text{ and }\sigma<_{P_z(E)(X)}\sigma').
\end{gather*}
For an embedding $f:X\to Y$, we define $P(E)(f):P(E)(X)\to P(E)(Y)$ by
\begin{equation*}
P(E)(f)(z,\sigma):=(z,P_z(E)(f)(\sigma)).
\end{equation*}
The support functions $\supp^z_X:P_z(E)(X)\to[X]^{<\omega}$ of the predilators~$P_z(E)$ can be combined into a function $\supp_X:P(E)(X)\to[X]^{<\omega}$ with
\begin{equation*}
\supp_X(z,\sigma):=\supp^z_X(\sigma).
\end{equation*}
It is straightforward to check that this defines~$P(E)$ as a predilator. To turn $P$ into a functor, consider a morphism $\mu:E_0\To E_1$ and define $P(\mu):P(E_0)\To P(E_1)$ by
\begin{equation*}
P(\mu)_X(z,\sigma):=(z,P_z(\mu)_X(\sigma)).
\end{equation*}
Definition~\ref{def:ptyx-support-variant} provides functions $\Supp^z_{E,X}:P_z(E)(X)\to[\tr(E)]^{<\omega}$, which we can combine into a function $\Supp_{E,X}:P(E)(X)\to[\tr(E)]^{<\omega}$ with
\begin{equation*}
\Supp_{E,X}(z,\sigma):=\Supp^z_{E,X}(\sigma).
\end{equation*}
Invoking Lemma~\ref{lem:ptyx-support-variant-back}, we see that this turns $P=\sum_{z\in Z}P_z$ into a preptyx. Now assume that $Z$ is a well order and that $P_z$ is a ptyx for each $z\in Z$. If $E$ is a dilator and $X$ is a well order, then each order~$P_z(E)(X)$ is well founded, so that the same holds for~$P(E)(X)$. Hence $P(E)$ is a dilator and $P$ is a ptyx. Let us also observe that we have a morphism $P_z(E)\To P(E)$ for each $z\in Z$ and each predilator~$E$; its components are given by $P_z(E)(X)\ni\sigma\mapsto(z,\sigma)\in P(E)(X)$. Finally, we can prove the central result of this paper. Note that the assumption of the theorem follows from the existence of a fixed point $D\cong P(D)$ that is a dilator.

\begin{theorem}[$\aca_0$]\label{thm:fixed-point-to-induction}
Assume that every normal $2$-ptyx~$P$ admits a morphism $P(D)\To D$ for some dilator~$D$. Then the principle of $\Pi^1_2$-induction along~$\mathbb N$ holds.
\end{theorem}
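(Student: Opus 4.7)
The plan is to bundle all instances of the induction step into a single normal $2$-ptyx, apply the hypothesis to obtain a bounding dilator, and then show by an arithmetical recursion that each $D^n_\psi$ embeds into this dilator. So let $\psi(n)$ be a $\Pi^1_2$-formula and assume the premise $\psi(0)\land\forall n(\psi(n)\to\psi(n+1))$; the goal is to derive $\forall n\,\psi(n)$. By Theorem~\ref{thm:ptykes-ind-step}(b), the assumed implications tell us that each $P^n_\psi$ is a $2$-ptyx; since $\mathbb N$ is a well order, the pointwise sum $P:=\sum_{n\in\mathbb N}P^n_\psi$ constructed just before Theorem~\ref{thm:fixed-point-to-induction} is again a ptyx.

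Using the normalization from Theorem~\ref{thm:ptyx-normal}, I would then form the normal $2$-ptyx
\[
P^+(E):=(D^0_\psi+1)+P^*(E).
\]
It is a ptyx because $\psi(0)$ together with Proposition~\ref{prop:D_n-psi}(b) makes $D^0_\psi+1$ a dilator, while $P^*$ preserves dilators by Theorem~\ref{thm:ptyx-normal}(b); normality is inherited from $P^*$, since prefixing a fixed initial summand preserves initial-segment inclusions. Applying the hypothesis to $P^+$ yields a dilator~$E$ and a morphism $\pi:P^+(E)\To E$, which serves as the ``approximate fixed point'' playing the role of $\alpha=f(\alpha)$ in the introductory sketch.

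The main work is to construct a sequence of morphisms $h^n:D^n_\psi+1\To E$ by primitive recursion on~$n$. For $n=0$, I would take $h^0$ to be the composition of the canonical embedding $D^0_\psi+1\To P^+(E)$ as the left summand with $\pi$. For the successor step, given $h^n$, I would string together: the composition of $\zeta^n$ from Proposition~\ref{prop:P-n_objects} with the summand morphism $P^n_\psi(D^n_\psi)\To P(D^n_\psi)$, then with the inclusion $P(D^n_\psi)\hookrightarrow P(D^n_\psi)+1$; the morphism $P(D^n_\psi)+1\To P^*(D^n_\psi+1)$ from Theorem~\ref{thm:ptyx-normal}(c); the morphism $P^*(h^n):P^*(D^n_\psi+1)\To P^*(E)$; the summand inclusion $P^*(E)\To P^+(E)$; and finally $\pi$. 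Writing $\chi^n:P(D^n_\psi)+1\To E$ for the composition of the last five of these, I would define
\[
h^{n+1}_X(\sigma):=\chi^n_X(\sigma)\ \text{for }\sigma\in D^{n+1}_\psi(X),\qquad h^{n+1}_X(\top):=\chi^n_X(\top),
\]
using the composite embedding of $D^{n+1}_\psi$ into $P(D^n_\psi)+1$ to interpret the first clause. Since every building block is $\Delta^0_1$-definable uniformly in the parameter~$h^n$, the recursion is legitimate in~$\aca_0$.

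With the sequence $h^n$ in hand, the conclusion is immediate: for each well order $X$, the order embedding $h^n_X:D^n_\psi(X)+1\to E(X)$ forces $D^n_\psi(X)$ to be well founded, so $D^n_\psi$ is a dilator and Proposition~\ref{prop:D_n-psi}(b) yields $\psi(n)$. The main obstacle I anticipate is the bookkeeping in the recursive step, specifically checking that $h^{n+1}$ is a well-defined morphism of predilators. Order preservation of $h^{n+1}_X$ uses that the image of $D^{n+1}_\psi(X)$ under the preceding composition lies in $P(D^n_\psi)(X)$, which sits strictly below $\top\in(P(D^n_\psi)+1)(X)$, together with the fact that $\chi^n_X$ is order preserving; naturality in~$X$ reduces to naturality of each link in the chain.
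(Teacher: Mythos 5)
Your proposal is correct and follows essentially the same route as the paper: the same pointwise sum $\sum_{n}P^n_\psi$, the same normalization $P^+(E)=D^0_\psi+1+P^*(E)$, and the same effective recursion producing morphisms $D^n_\psi+1\To E$ whose components witness that each $D^n_\psi$ preserves well foundedness. The only point the paper treats more carefully is the legitimacy of the recursion in $\aca_0$: the $\Delta^0_1$-definition of $P^*(\kappa^n)$ is only guaranteed to be total and unambiguous when $\kappa^n$ actually codes a morphism, so one must anticipate this fact within the recursion (as in the cited treatment of effective recursion), but this is a technical refinement of exactly the step you describe.
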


Let us recall that we can quantify over (coded) dilators, since the latter are represented by subsets of~$\mathbb N$ (cf.~the discussion after Definition~\ref{def:coded-predil}). The situation for ptykes is somewhat different: In the paragraph before Proposition~\ref{prop:ptykes-direct-limit}, we have sketched how to construct a universal family of preptykes. However, we have not carried out the details of this construction. For this reason, one should read the previous theorem as a schema: Given a $\Pi^1_2$-formula~$\psi$, possibly with parameters, we will specify a $\Delta^0_1$-definable family of preptykes. Assuming that every normal ptyx~$P$ from this family admits a morphism $P(D)\To D$, we will establish the induction principle for~$\psi$ and arbitrary values of the parameters.

\begin{proof}
Let $\psi(n)$ be an arbitrary $\Pi^1_2$-formula with a distinguished number variable, possibly with further parameters. We will use the previous constructions and results for this formula~$\psi$ (with respect to some fixed normal form, cf.~the second paragraph of this section). Theorem~\ref{thm:ptykes-ind-step} provides $2$-preptykes~$P^n_\psi$ for all~$n\in\mathbb N$. Let us form their pointwise sum
\begin{equation*}
P_\psi:=\textstyle\sum_{n\in\mathbb N}P^n_\psi.
\end{equation*}
As we have seen above, we have a morpism $P^n_\psi(E)\To P_\psi(E)$ for each predilator~$E$ and each number~$n\in\mathbb N$. For the predilators $D^n_\psi$ from Proposition~\ref{prop:D_n-psi}, we will denote these morphisms by
\begin{equation*}
\iota^n:P_\psi^n(D^n_\psi)\To P_\psi(D^n_\psi).
\end{equation*}
Invoking Theorem~\ref{thm:ptyx-normal} (and the constructions that precede it), we consider the normal $2$-preptyx~$P^*_\psi:=(P_\psi)^*$ and the morphisms
\begin{equation*}
\xi^n:P_\psi(D^n_\psi)+1\To P^*_\psi(D^n_\psi+1).
\end{equation*}
Theorem~\ref{thm:ptykes-ind-step} tells us that~$P^n_\psi$ corresponds to the induction step~$\psi(n)\to\psi(n+1)$. To incorporate the base of the induction, we form the $2$-preptyx~$P^+_\psi$ with
\begin{equation*}
P^+_\psi(E):=D^0_\psi+1+P^*_\psi(E).
\end{equation*}
More formally, this can be explained as the pointwise sum $P^+_\psi=\sum_{i\in 2}P_i$, where $P_1$ is $P^*_\psi$ and $P_0$ is the constant preptyx with value $D^0_\psi+1$ (note that the latter assigns support $\Supp_{E,X}(\sigma):=\emptyset\in[\tr(E)]^{<\omega}$ to any $\sigma\in P_0(E)(X)=D^0_\psi(X)+1$). Let us observe that~$P^+_\psi$ is still normal: Given that $\mu:E_0\To E_1$ is a segment, the range of each component $P^*_\psi(\mu)_X:P_\psi^*(E_0)(X)\to P_\psi^*(E_1)(X)$ is an initial segment of the order~$P_\psi^*(E_1)(X)$, due to the normality of $P_\psi^*$ (cf.~Definitions~\ref{def:segment} and~\ref{def:ptyx-normal}). For $(0,\sigma)\in P^+_\psi(E_0)(X)$ with $\sigma\in D^0_\psi(X)+1$ we have $P^+_\psi(\mu)_X(0,\sigma)=(0,\sigma)$. So
\begin{equation*}
\rng(P^+_\psi(\mu)_X)=\{(0,\sigma)\,|\,\sigma\in D^0_\psi(X)+1\}\cup\{(1,P^*_\psi(\mu)_X(\tau)\,|\,\tau\in P^*_\psi(E_0)(X)\}
\end{equation*}
is an initial segment of $P^+_\psi(E_1)(X)$, as needed to show that $P^+_\psi$ is normal. The family of preptykes~$P^+_\psi$ is $\Delta^0_1$-definable in the parameters of~$\psi$, just as all relevant objects that we have constructed in the previous sections. We will use fixed points of ptykes from this family to derive the induction principle for~$\psi$.  For this purpose we fix values of the parameters and assume
\begin{equation*}
\psi(0)\land\forall_{n\in\mathbb N}(\psi(n)\to\psi(n+1)).
\end{equation*}
By Theorem~\ref{thm:ptykes-ind-step} we can conclude that~$P^n_\psi$ is a ptyx for every~$n\in\mathbb N$. As we have seen, it follows that $P_\psi=\sum_{n\in\mathbb N}P^n_\psi$ is a ptyx as well. Then~$P^*_\psi$ is a normal ptyx, by Theorem~\ref{thm:ptyx-normal}. Given that~$\psi(0)$ holds, Proposition~\ref{prop:D_n-psi} tells us that $D^0_\psi$ is a dilator. It is straightforward to conclude that~$P^+_\psi$ is a normal $2$-ptyx. By the assumption of the theorem, we now get a dilator~$E$ that admits a morphism
\begin{equation*}
\chi:P^+_\psi(E)\To E.
\end{equation*}
In the following, we will construct morphisms
\begin{equation*}
\kappa^n:D^n_\psi+1\To E.
\end{equation*}
Given that $E$ is a dilator, these morphisms will ensure that $D^n_\psi$ is a dilator for each number~$n\in\mathbb N$ (as in the proof of part~(b) of Theorem~\ref{thm:ptykes-ind-step}). By Proposition~\ref{prop:D_n-psi} this yields $\forall_{n\in\mathbb N}\,\psi(n)$, which is the desired conclusion of induction. To construct~$\kappa^n$, we first note that the pointwise sum~$P^+_\psi$ comes with morphisms
\begin{equation*}
\pi^0:D^0_\psi+1\To P^+_\psi(E)\quad\text{and}\quad\pi^1:P^*_\psi(E)\To P^+_\psi(E).
\end{equation*}
Using the morphisms $\zeta^n:D_\psi^{n+1}\To P_\psi^n(D_\psi^n)$ from Proposition~\ref{prop:P-n_objects} (and the construction from Example~\ref{ex:ptyx-succ}), we can recursively define
\begin{align*}
\kappa^0&:=\chi\circ\pi^0,\\
\kappa^{n+1}&:=\chi\circ\pi^1\circ P^*_\psi(\kappa^n)\circ\xi^n\circ(\iota^n+1)\circ(\zeta^n+1).
\end{align*}
Working in~$\aca_0$, this construction can be implemented as an effective recursion along~$\mathbb N$ (see~\cite{freund-eff-rec} for a detailed exposition of this principle). To justify this claim, we need to verify that $\kappa^{n+1}$ is $\Delta^0_1$-definable relative to $\kappa^n\subseteq\mathbb N$ (cf.~the discussion before Lemma~\ref{def:morphism-dilators}). This brings up a somewhat subtle point: In the discussion before Theorem~\ref{thm:ptyx-normal}, we have noted that the relation $P^*_\psi(\mu)_X(\sigma)=\tau$ is $\Delta^0_1$-definable. More precisely, the relation is defined by a $\Sigma^0_1$- and a $\Pi^0_1$-formula that are equivalent when $\mu\subseteq\mathbb N$ represents a morphism of predilators. However, we may not be able to assume that the two formulas are equivalent for any parameter $\mu\subseteq\mathbb N$. To point out one potential issue, we note that it is problematic to compose functions when we do not know whether they are total. A way around this obstacle has been presented in~\cite{freund-eff-rec}: In defining $\kappa^{n+1}$, we may anticipate the result of the construction and assume that $\kappa^n$ is a morphism of predilators. This ensures that we can use the aforementioned $\Delta^0_1$-definition of the components
\begin{equation*}
P^*_\psi(\kappa^n)_X:P^*_\psi(D^n_\psi+1)(X)\to P^*_\psi(E)(X).
\end{equation*}
It also ensures that the latter are total functions. Due to this fact, the composition above provides the required $\Delta^0_1$-definition of~$\kappa^{n+1}$.
\end{proof}

\section{Constructing fixed points of $2$-ptykes}\label{sect:construct-fixed-points}

In the present section we show how to construct a predilator~$D_P\cong P(D_P)$ for any given $2$-preptyx~$P$. Using $\Pi^1_2$-induction along~$\mathbb N$, we then prove that $D_P$ is a dilator when $P$ is a normal $2$-ptyx.

We begin with some foundational considerations. Recall that the arguments and values of our preptykes are coded predilators in the sense of Definition~\ref{def:coded-predil}. If $D$ is a coded predilator, then $D(X)$ is only defined when $X$ is a finite order of the form~$m=\{0,\dots,m-1\}$; for better readability, we will nevertheless write $D(X)$ rather than~$D(m)$. Coded predilators are represented by subsets of~$\mathbb N$, as explained after Definition~\ref{def:coded-predil}. In view of ~\cite[Section~4]{freund-single-fixed-point} it seems plausible that the desired fixed point $D_P\cong P(D_P)$ can be constructed in~$\rca_0$. We will work in~$\aca_0$, as this allows for a less technical approach and ties in with the base theory of Theorem~\ref{thm:fixed-point-to-induction}. Note that the class-sized extension $\overline{D_P}$ will still be computable (relative to $D_P\subseteq\mathbb N$; cf.~the discussion after Theorem~\ref{thm:class-coded-equiv}).

The idea is to define $D_P$ as the direct limit over iterated applications of~$P$. To provide a base for the iteration, we point out that any linear order~$Z$ gives rise to a constant predilator with values $D(X):=Z$. For each embedding~$f:X\to Y$, the embedding $D(f):D(X)\to D(Y)$ is defined as the identity on~$Z$. To obtain a natural transformation $\supp:D\To[\cdot]^{<\omega}$, we need to set $\supp_X(\sigma):=\emptyset\in[X]^{<\omega}$ for every $\sigma\in D(X)$. It is straightforward to see that this satisfies the support condition from Definition~\ref{def:dilator}. Working in $\aca_0$, the following definition can be implemented as an effective recursion along~$\mathbb N$ (see~\cite{freund-eff-rec} and the proof of Theorem~\ref{thm:fixed-point-to-induction} above). This relies on the standing assumption that all our preptykes are given by $\Delta^0_1$-relations (cf.~the discussion after Definition~\ref{def:ptyx}). For the inductive proof that $D^n_P$ is a predilator, we rely on the fact that the notion of (coded) predilator is  arithmetical (in contrast to the notion of dilator, which is $\Pi^1_2$-complete).

\begin{definition}\label{def:iterated-appl-ptyx}
Given a $2$-preptyx~$P$, we define predilators~$D^n_P$ and morphisms $\nu^n:D^n_P\To D^{n+1}_P$ by recursion over~$n\in\mathbb N$: For $n=0$ we declare that $D^0_P$ is the constant dilator with value $0$ (the empty order). Assuming that $D^n_P$ is already defined, we set $D^{n+1}_P:=P(D^n_P)$. Each component of the morphism $\nu^0$ is the empty function. Recursively, we put $\nu^{n+1}:=P(\nu^n)$.
\end{definition}

To describe the direct limit over the predilators~$D^n_P$, we will use the following observation with $\nu^n:D^n_P\To D^{n+1}_P$ at the place of $\mu:D\To E$.

\begin{lemma}\label{lem:rng-morph-inv}
For a morphism $\mu:D\To E$ and an embedding $f:X\to Y$, we have
\begin{equation*}
\sigma\in\rng(\mu_X)\quad\Leftrightarrow\quad E(f)(\sigma)\in\rng(\mu_Y)
\end{equation*}
for any element $\sigma\in E(X)$.
\end{lemma}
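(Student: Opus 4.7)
The forward implication is immediate from naturality: given $\sigma = \mu_X(\sigma_0)$ with $\sigma_0 \in D(X)$, I would compute
\begin{equation*}
E(f)(\sigma) = E(f)(\mu_X(\sigma_0)) = \mu_Y(D(f)(\sigma_0)) \in \rng(\mu_Y).
\end{equation*}

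For the converse, the plan is to reduce the preimage problem in $E(Y)$ to one in $D(Y)$ and then descend along $f$. Assume $E(f)(\sigma) = \mu_Y(\tau)$ for some $\tau \in D(Y)$. The first step is to bound the support of $\tau$: by Lemma~\ref{lem:transfos-respect-supp} and naturality of $\supp^E$,
\begin{equation*}
\supp^D_Y(\tau) = \supp^E_Y(\mu_Y(\tau)) = \supp^E_Y(E(f)(\sigma)) = [f]^{<\omega}(\supp^E_X(\sigma)) \subseteq \rng(f).
\end{equation*}
Now I apply the support condition from part~(ii) of Definition~\ref{def:dilator} to the predilator $D$: the inclusion $\supp^D_Y(\tau) \subseteq \rng(f)$ forces $\tau = D(f)(\tau_0)$ for some $\tau_0 \in D(X)$. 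Then
\begin{equation*}
E(f)(\sigma) = \mu_Y(D(f)(\tau_0)) = E(f)(\mu_X(\tau_0)),
\end{equation*}
again by naturality of $\mu$. Since $E(f)$ is an order embedding and hence injective, I conclude $\sigma = \mu_X(\tau_0) \in \rng(\mu_X)$.

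There is no real obstacle here; the only subtlety is remembering to invoke Lemma~\ref{lem:transfos-respect-supp} so that the support condition on $D$ (rather than $E$) becomes applicable, which is exactly what lets us pull $\tau$ back along $f$.
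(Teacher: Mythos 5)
Your proof is correct and follows exactly the paper's argument: the forward direction by naturality, and the converse by computing $\supp^D_Y(\tau)\subseteq\rng(f)$ via Lemma~\ref{lem:transfos-respect-supp}, applying the support condition for~$D$ to pull $\tau$ back along~$f$, and concluding by injectivity of~$E(f)$. The paper merely abbreviates the middle step by referring to the proof of Lemma~\ref{lem:weakly-normal}, which you have spelled out.
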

\begin{proof}
Given $\sigma=\mu_X(\sigma_0)\in\rng(\mu_X)$, naturality yields
\begin{equation*}
E(f)(\sigma)=E(f)\circ\mu_X(\sigma_0)=\mu_Y\circ D(f)(\sigma_0)\in\rng(\mu_Y).
\end{equation*}
Now assume $E(f)(\sigma)=\mu_Y(\tau)\in\rng(\mu_Y)$. As in the proof of Lemma~\ref{lem:weakly-normal}, we can infer $\tau\in\rng(D(f))$. Writing $\tau=D(f)(\sigma_0)$, we obtain
\begin{equation*}
E(f)(\sigma)=\mu_Y(\tau)=\mu_Y\circ D(f)(\sigma_0)=E(f)\circ\mu_X(\sigma_0).
\end{equation*}
Since $E(f)$ is an embedding, this yields $\sigma=\mu_X(\sigma_0)\in\rng(\mu_X)$.
\end{proof}

For $n\leq k$ we define $\nu^{nk}:D^n_P\To D^k_P$ as $\nu^{nk}:=\nu^{k-1}\circ\dots\circ\nu^n$. In particular, the morphism $\nu^{nn}$ is the identity on~$D^n_P$. We can now define the desired limit as follows. Note that the definition of $D_P(f)$ is justified by the previous lemma.

\begin{definition}\label{def:D_P}
For each (finite) linear order~$X$ we put
\begin{gather*}
D_P(X):=\{(m,\sigma)\,|\,m\in\mathbb N\backslash\{0\}\text{ and }\sigma\in D^m_P(X)\backslash\rng(\nu^{m-1}_X)\},\\
(m,\sigma)<_{D_P(X)}(n,\tau)\quad:\Leftrightarrow\quad\nu^{mk}_X(\sigma)<_{D^k_P(X)}\nu^{nk}_X(\tau)\text{ for }k=\max\{m,n\}.
\end{gather*}
Given an embedding $f:X\to Y$, we define $D_P(f):D_P(X)\to D_P(Y)$ by
\begin{equation*}
D_P(f)(m,\sigma):=(m,D^m_P(f)(\sigma)).
\end{equation*}
Finally, we define functions $\supp_X:D_P(X)\to[X]^{<\omega}$ by setting
\begin{equation*}
\supp_X(m,\sigma):=\supp^m_X(\sigma),
\end{equation*}
where $\supp^m_X:D^m_P(X)\to[X]^{<\omega}$ is the support function of the predilator~$D^m_P$.
\end{definition}

Let us verify that we have constructed an object of the intended type:

\begin{lemma}
The constructions from Definition~\ref{def:D_P} yield a predilator~$D_P$.
\end{lemma}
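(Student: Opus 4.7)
The plan is to verify in turn each clause of Definition~\ref{def:coded-predil} (that is, linearity of the orders $D_P(X)$, functoriality with monotonicity, naturality of $\supp$, and the support condition), transferring the corresponding properties from the approximations $D^m_P$ via the morphisms $\nu^m:D^m_P\To D^{m+1}_P$. The whole argument is essentially the standard fact that a directed colimit of linear orders in the category of predilators is a predilator, implemented concretely by picking a canonical representative in each isomorphism class---namely, declaring $(m,\sigma)$ to be an element of $D_P(X)$ precisely when $\sigma\in D^m_P(X)$ is not already ``old'', i.e.\ does not lie in $\rng(\nu^{m-1}_X)$. With this convention, every element of $\bigcup_m D^m_P(X)$ modulo the identifications induced by the $\nu^k_X$ has a unique representative in $D_P(X)$.

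First I would check that $<_{D_P(X)}$ is a linear order. The key observation is that the definition is insensitive to the choice of~$k$ above $\max\{m,n\}$: if $k'\geq k$ then $\nu^{mk'}_X=\nu^{kk'}_X\circ\nu^{mk}_X$, and $\nu^{kk'}_X$ is an order embedding, so enlarging $k$ preserves and reflects inequalities. From this, irreflexivity, trichotomy and transitivity all reduce to the corresponding properties of the linear orders $D^k_P(X)$ after lifting both sides to a common level. Next, I would verify that $D_P(f)$ is well-defined---i.e.\ that $(m,\sigma)\in D_P(X)$ implies $(m,D^m_P(f)(\sigma))\in D_P(Y)$. The only non-trivial point is that $D^m_P(f)(\sigma)\notin\rng(\nu^{m-1}_Y)$, and this is precisely the content of Lemma~\ref{lem:rng-morph-inv} applied to $\mu=\nu^{m-1}$. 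That $D_P(f)$ preserves the order and is functorial follows from the naturality of each $\nu^k$ together with the fact that $D^m_P(f)$ is an order embedding; monotonicity in $f$ transfers in the same way from the monotonicity of each~$D^m_P$.

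For the naturality of $\supp$, one uses Lemma~\ref{lem:transfos-respect-supp} applied to $\nu^{m-1}$ (so that the $\supp$ chosen at level~$m$ is compatible with the one at level~$m-1$) together with the naturality of each $\supp^m$. The main content is the support condition, and this is where the construction really pays off. Suppose $(m,\sigma)\in D_P(Y)$ and
\begin{equation*}
\supp_Y(m,\sigma)=\supp^m_Y(\sigma)\subseteq\rng(f)
\end{equation*}
for some embedding $f:X\to Y$. Since $D^m_P$ is a predilator, the support condition at level~$m$ yields $\sigma_0\in D^m_P(X)$ with $D^m_P(f)(\sigma_0)=\sigma$. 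The only thing to check is that $\sigma_0\notin\rng(\nu^{m-1}_X)$, because then $(m,\sigma_0)\in D_P(X)$ and $D_P(f)(m,\sigma_0)=(m,\sigma)$. But if $\sigma_0$ were in $\rng(\nu^{m-1}_X)$, then Lemma~\ref{lem:rng-morph-inv} would give $\sigma=D^m_P(f)(\sigma_0)\in\rng(\nu^{m-1}_Y)$, contradicting $(m,\sigma)\in D_P(Y)$.

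The most error-prone step will be the bookkeeping in the definition of $<_{D_P(X)}$, verifying independence of~$k$ and then using this to set up trichotomy cleanly; beyond that, every clause reduces to an invocation of a property of the $D^m_P$ together with Lemma~\ref{lem:rng-morph-inv}, so no genuine obstacle is expected.
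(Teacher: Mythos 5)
Your proposal is correct and follows essentially the same route as the paper: independence of the level $k$ in the definition of the order, Lemma~\ref{lem:rng-morph-inv} both for the well-definedness of $D_P(f)$ and for the support condition, and transfer of all remaining clauses from the predilators $D^m_P$. The one point you assert without argument---that distinct elements of $D_P(X)$ stay distinct after lifting to a common level, which is what trichotomy actually requires---is exactly where the paper uses the clause $\sigma\notin\rng(\nu^{m-1}_X)$: if $\nu^{mk}_X(\sigma)=\nu^{nk}_X(\tau)$ with $m<n$, then $\tau=\nu^{n-1}_X\circ\nu^{m(n-1)}_X(\sigma)\in\rng(\nu^{n-1}_X)$, contradicting $(n,\tau)\in D_P(X)$.
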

\begin{proof}
To show that each value $D_P(X)$ is a linear order, we first observe that the truth of an equality $\nu^{mk}_X(\sigma)<_{D^k_P(X)}\nu^{nk}_X(\tau)$ is independent of $k\geq\max\{m,n\}$, since all functions~$\nu^l_X$ are embeddings. Based on this fact, it is straightforward to show transitivity. Linearity reduces to the claim that $\nu^{mk}_X(\sigma)=\nu^{nk}_X(\tau)$ implies~$m=n$ (and then $\sigma=\tau$, since $\nu^{mk}_X$ is an embedding). Aiming at a contradiction, we assume~$m<n$. From
\begin{equation*}
\nu^{nk}_X(\tau)=\nu^{mk}_X(\sigma)=\nu^{nk}_X\circ\nu^{n-1}_X\circ\nu^{m(n-1)}_X(\sigma)
\end{equation*}
we then get $\tau=\nu^{n-1}_X\circ\nu^{m(n-1)}_X(\sigma)\in\rng(\nu^{n-1}_X)$, which contradicts $(n,\tau)\in D_P(X)$. To see that $D_P(f)$ is an order embedding whenever the same holds for $f:X\to Y$, we note that $\nu^{mk}_X(\sigma)<_{D^k_P(X)}\nu^{nk}_X(\tau)$ implies
\begin{equation*}
\nu^{mk}_Y\circ D^m_P(f)(\sigma)=D^k_P(f)\circ\nu^{mk}_X(\sigma)<_{D^k_P(Y)} D^k_P(f)\circ\nu^{nk}_X(\tau)=\nu^{nk}_Y\circ D^n_P(\tau).
\end{equation*}
The fact that $D_P$ is a monotone endofunctor of linear orders is readily deduced from the corresponding property of the predilators~$D^m_P$. The same applies to the naturality of supports. To verify the support condition, we consider an order embedding $f:X\to Y$ and an element $(m,\tau)\in D_P(Y)$ with
\begin{equation*}
\rng(f)\supseteq\supp_Y(m,\tau)=\supp^m_Y(\tau).
\end{equation*}
By the support condition for~$D^m_P$, we get $\tau=D^m_P(f)(\sigma)$ for some $\sigma\in D^m_P(X)$. Due to $(m,\tau)\in D_P(Y)$ we have $D^m_P(f)(\sigma)\notin\rng(\nu^{m-1}_Y)$, which implies $\sigma\notin\rng(\nu^{m-1}_X)$ by Lemma~\ref{lem:rng-morph-inv}. This yields $(m,\sigma)\in D_P(X)$ and then
\begin{equation*}
(m,\tau)=D_P(f)(m,\sigma)\in\rng(D_P(f)),
\end{equation*}
as required by the support condition for~$D_P$.
\end{proof}

As promised above, we have the following:

\begin{proposition}\label{prop:fixed-point-limit}
For each $2$-preptyx~$P$, the predilator $D_P$ is a direct limit of the system of predilators~$D^m_P$ and morphisms $\nu^{mn}:D^m_P\To D^n_P$.
\end{proposition}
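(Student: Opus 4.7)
The plan is to apply Proposition~\ref{prop:direct-limit-dilators}: it suffices to exhibit compatible morphisms $\nu^{n\infty}:D^n_P\To D_P$ satisfying $\nu^{n\infty}\circ\nu^{mn}=\nu^{m\infty}$ for $m\leq n$, and then to verify
\begin{equation*}
\tr(D_P)=\bigcup_{n\in\mathbb N}\rng(\nu^{n\infty}).
\end{equation*}

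The first step is a normal-form lemma for the approximants. I would show that every $\sigma\in D^n_P(X)$ admits a unique decomposition $\sigma=\nu^{kn}_X(\sigma_0)$ with $1\leq k\leq n$ and $\sigma_0\in D^k_P(X)\setminus\rng(\nu^{k-1}_X)$. Existence follows by choosing $k$ minimal with $\sigma\in\rng(\nu^{kn}_X)$; such a $k$ exists since $\nu^{nn}_X$ is the identity, and $k\geq 1$ because $D^0_P(X)=\emptyset$. The minimality forces $\sigma_0\notin\rng(\nu^{k-1}_X)$, for otherwise $\sigma_0=\nu^{k-1}_X(\sigma_1)$ would yield $\sigma=\nu^{(k-1)n}_X(\sigma_1)$, contradicting the choice of $k$. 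Uniqueness reduces to the injectivity of the $\nu^j_X$.

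Using this decomposition I define $\nu^{n\infty}_X(\sigma):=(k,\sigma_0)$ whenever $\sigma=\nu^{kn}_X(\sigma_0)$ is in normal form; the target element lies in $D_P(X)$ by construction. To check that $\nu^{n\infty}_X$ is an order embedding, I would unfold the definition of $<_{D_P(X)}$: given normal forms $\sigma=\nu^{kn}_X(\sigma_0)$ and $\tau=\nu^{ln}_X(\tau_0)$, letting $j=\max\{k,l\}\leq n$, the embedding $\nu^{jn}_X$ translates $\sigma<_{D^n_P(X)}\tau$ into $\nu^{kj}_X(\sigma_0)<_{D^j_P(X)}\nu^{lj}_X(\tau_0)$, which is precisely $(k,\sigma_0)<_{D_P(X)}(l,\tau_0)$. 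Naturality in $X$ uses Lemma~\ref{lem:rng-morph-inv}: if $\sigma$ has normal form $(k,\sigma_0)$, then $D^n_P(f)(\sigma)=\nu^{kn}_Y(D^k_P(f)(\sigma_0))$, and the lemma ensures $D^k_P(f)(\sigma_0)\notin\rng(\nu^{k-1}_Y)$, so $D^n_P(f)(\sigma)$ has the same $k$ in its normal form. Compatibility $\nu^{n\infty}\circ\nu^{mn}=\nu^{m\infty}$ is immediate, since composition with $\nu^{mn}_X$ leaves normal forms unchanged.

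For the trace criterion, the inclusion $\bigcup_n\rng(\nu^{n\infty})\subseteq\tr(D_P)$ is automatic. Conversely, an arbitrary element of $\tr(D_P)$ has the form $(n,(k,\sigma_0))$ with $\sigma_0\in D^k_P(n)\setminus\rng(\nu^{k-1}_n)$ and $\supp^k_n(\sigma_0)=\supp^{D_P}_n(k,\sigma_0)=n$, so $(n,\sigma_0)\in\tr(D^k_P)$. Since $\sigma_0$ is itself in normal form as an element of $D^k_P(n)$, we have $\nu^{k\infty}_n(\sigma_0)=(k,\sigma_0)$, and therefore $(n,(k,\sigma_0))=\tr(\nu^{k\infty})(n,\sigma_0)\in\rng(\nu^{k\infty})$. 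This completes the verification. The construction involves no real obstacle; the only technical point is matching the ``canonical representative'' structure used to define $D_P(X)$ with the normal-form decomposition, which is precisely why the exclusion $\sigma\notin\rng(\nu^{m-1}_X)$ was built into Definition~\ref{def:D_P}.
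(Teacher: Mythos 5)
Your proposal is correct and follows essentially the same route as the paper: you define the canonical morphisms $D^n_P\To D_P$ via the minimal-$k$ decomposition $\sigma=\nu^{kn}_X(\sigma_0)$ with $\sigma_0\notin\rng(\nu^{k-1}_X)$, verify compatibility and naturality via Lemma~\ref{lem:rng-morph-inv}, and reduce the limit property to the trace criterion of Proposition~\ref{prop:direct-limit-dilators}, exactly as in the paper's proof (where your $\nu^{n\infty}$ is called $\mu^n$). No substantive differences.
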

\begin{proof}
The promised limit should come with morphisms $\mu^k:D^k_P\To D_P$. In order to define these, we observe that any element $\sigma\in D^k_P(X)$ lies in the range of~$\nu^{kk}_X$, since the latter is the identity. Hence there is a minimal $m\leq k$ with $\sigma\in\rng(\nu^{mk}_X)$. In view of $D^0_P(X)=0$ we must have $m>0$. We get $\sigma\notin\rng(\nu^{m-1}_X)$ by the minimality of~$m$. Since $\nu^{mk}_X$ is an embedding (and hence injective), we can define a function $\mu^k_X:D^k_P(X)\to D_P(X)$ by stipulating
\begin{equation*}
\mu^k_X(\sigma):=(m,\sigma_0)\quad\text{for $\sigma=\nu^{mk}_X(\sigma_0)$ with $m\leq k$ as small as possible}.
\end{equation*}
It is immediate that $\mu^k_X$ is an order embedding and that we have $\mu^n_X\circ\nu^{mn}_X=\mu^m_X$ for $m\leq n$. To show that the functions $\mu^k_X$ are natural in~$X$, we consider an order embedding $f:X\to Y$. Consider $\sigma\in D^k_P(X)$ and write $\mu^k_X(\sigma)=(m,\sigma_0)$. We get
\begin{equation*}
D^k_P(f)(\sigma)=D^k_P(f)\circ\nu^{mk}_X(\sigma_0)=\nu^{mk}_Y\circ D^m_P(f)(\sigma_0).
\end{equation*}
Furthermore, $\sigma\notin\rng(\nu^{m-1}_X)$ entails $D^m_P(f)(\sigma)\notin\rng(\nu^{m-1}_Y)$, by Lemma~\ref{def:D_P}. This minimality property of~$m$ allows us to conclude
\begin{equation*}
\mu^k_Y\circ D^k_P(f)(\sigma)=(m,D^m_P(f)(\sigma_0))=D_P(f)(m,\sigma_0)=D_P(f)\circ\mu^k_X(\sigma).
\end{equation*}
By Proposition~\ref{prop:direct-limit-dilators}, the claim that we have a direct limit reduces to the inclusion
\begin{equation*}
\tr(D_P)\subseteq\bigcup\{\rng(\mu^n)\,|\,n\in\mathbb N\}.
\end{equation*}
An arbitrary element of the left side has the form $(m,(n,\sigma))$ with $(n,\sigma)\in D_P(m)$ and $\supp_m(n,\sigma)=m$, where $m$ denotes the finite order $m=\{0,\dots,m-1\}$. By the definition of~$D_P$ we have $\sigma\in D^n_P(m)\backslash\rng(\nu^{n-1}_m)$. In view of $\sigma=\nu^{nn}_m(\sigma)$ this yields~$\mu^n_m(\sigma)=(n,\sigma)$. As $\supp^n_m(\sigma)=\supp_m(n,\sigma)=m$ entails $(m,\sigma)\in\tr(D^n_P)$, we can invoke Definition~\ref{def:trace-morphism} to conclude
\begin{equation*}
(m,(n,\sigma))=(m,\mu^n_m(\sigma))=\tr(\mu^n)(m,\sigma)\in\rng(\mu^n),
\end{equation*}
as required to establish the inclusion above.
\end{proof}

It is now straightforward to draw the desired conclusion:

\begin{corollary}\label{cor:iso-fixed-point}
We have $D_P\cong P(D_P)$ for any $2$-preptyx~$P$.
\end{corollary}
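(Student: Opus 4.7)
The plan is to deduce the isomorphism from the universal property of direct limits, using that every $2$-preptyx preserves them. By Proposition~\ref{prop:fixed-point-limit}, the cocone $(\mu^k:D^k_P\To D_P)_{k\in\mathbb N}$ exhibits $D_P$ as a direct limit of the system $(D^n_P,\nu^{mn})$. Since $P$ is a $2$-preptyx, Proposition~\ref{prop:ptykes-direct-limit} tells us that $P$ preserves direct limits; hence the cocone $(P(\mu^k):P(D^k_P)\To P(D_P))_{k\in\mathbb N}$ exhibits $P(D_P)$ as a direct limit of the system $(P(D^n_P),P(\nu^{mn}))$.

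Next I would note that this second system is precisely the shifted system $(D^{n+1}_P,\nu^{(m+1)(n+1)})$, by the very definition of $D^{n+1}_P=P(D^n_P)$ and $\nu^{n+1}=P(\nu^n)$ in Definition~\ref{def:iterated-appl-ptyx}. Moreover, the truncated cocone $(\mu^{k+1}:D^{k+1}_P\To D_P)_{k\in\mathbb N}$ is also a direct limit of this shifted system, because the subsystem indexed by $n\geq 1$ is cofinal in the one indexed by $n\geq 0$ and the two systems therefore share the same colimit; alternatively, one can observe directly that $D_P=\bigcup_{k\geq 1}\rng(\mu^k)$ by the argument at the end of Proposition~\ref{prop:fixed-point-limit}, so condition~(ii) of Proposition~\ref{prop:direct-limit-dilators} is satisfied. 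Thus $D_P$ and $P(D_P)$ are two direct limits of the same directed system and must be isomorphic; explicitly, the universal property yields a unique morphism $\varphi:D_P\To P(D_P)$ with $\varphi\circ\mu^{k+1}=P(\mu^k)$ for all $k$, and a unique inverse in the opposite direction.

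The only step that requires a bit of care is the claim that the shifted cocone $(\mu^{k+1})_{k\in\mathbb N}$ is itself a direct limit; I would handle this either by the cofinality remark above or, more concretely, by checking that the trace identity $\tr(D_P)=\bigcup_{k\in\mathbb N}\rng(\mu^{k+1})$ still holds (which is immediate from Definition~\ref{def:D_P}, since any element of $\tr(D_P)$ lies in $\rng(\mu^n)$ for some $n\geq 1$) and then invoking Proposition~\ref{prop:direct-limit-dilators}. I do not anticipate any real obstacle here: the whole argument is a direct application of the preservation property built into the definition of preptyx.
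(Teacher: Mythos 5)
Your proposal is correct and follows essentially the same route as the paper: apply Proposition~\ref{prop:ptykes-direct-limit} to see that $P$ preserves direct limits, identify $P(D_P)$ as the limit of the shifted system $D^{n+1}_P=P(D^n_P)$, and note that dropping $D^0_P$ does not change the colimit. Your extra verification that $\tr(D_P)=\bigcup_{k}\rng(\mu^{k+1})$ via Proposition~\ref{prop:direct-limit-dilators} just makes explicit the parenthetical remark in the paper's proof that the omission of $D^0_P$ is irrelevant.
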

\begin{proof}
According to Proposition~\ref{prop:ptykes-direct-limit}, the preptyx $P$ preserves direct limits. In view of Definition~\ref{def:iterated-appl-ptyx}, we can infer that $P(D_P)$ is a direct limit of the system of predilators~$D^m_P$ and morphisms $\nu^{mn}:D^m\To D^n$ for $1\leq m\leq n$. The predilator~$D_P$ is a direct limit of the same system (note that the omission of $D^0_P$ is irrelevant). The result follows since direct limits are unique up to isomorphism.
\end{proof}

As Example~\ref{ex:ptyx-succ} shows, it can happen that $P$ is a ptyx (i.\,e.,~preserves dilators) but $D_P$ is no dilator (i.\,e., does not preserve well orders). In the following we show that $D_P$ is a dilator if we also demand that $P$ is normal in the sense of Definition~\ref{def:ptyx-normal}. Combined with Theorem~\ref{thm:fixed-point-to-induction}, this yields our main result:

\begin{theorem}\label{thm:main}
The following are equivalent over~$\aca_0$:
\begin{enumerate}[label=(\roman*)]
\item the principle of $\Pi^1_2$-induction along~$\mathbb N$ holds,
\item if $P$ is a normal $2$-ptyx, then $D_P$ is a dilator,
\item any normal $2$-ptyx~$P$ admits a morphism $P(D)\To D$ for some dilator~$D$.
\end{enumerate}
\end{theorem}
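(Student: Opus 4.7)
The plan is to prove the cycle (iii) $\Rightarrow$ (i) $\Rightarrow$ (ii) $\Rightarrow$ (iii). The implication (iii) $\Rightarrow$ (i) is exactly Theorem~\ref{thm:fixed-point-to-induction}. For (ii) $\Rightarrow$ (iii), I set $D := D_P$: by (ii) this is a dilator, and Corollary~\ref{cor:iso-fixed-point} supplies an isomorphism $D_P\cong P(D_P)$ whose inverse is the required morphism $P(D)\To D$.

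The substantive work is (i) $\Rightarrow$ (ii). Assuming $\Pi^1_2$-induction along~$\mathbb N$, I fix a normal $2$-ptyx~$P$ and aim to show that $D_P$ is a dilator. The argument rests on two separate inductions over~$n\in\mathbb N$. By an arithmetical induction I first establish that each morphism $\nu^n:D^n_P\To D^{n+1}_P$ from Definition~\ref{def:iterated-appl-ptyx} is a segment: the base case holds vacuously because the components of $\nu^0$ are empty functions, while the induction step is precisely the defining property of normality applied to $\nu^{n+1}=P(\nu^n)$. Since segments compose, it follows that $\nu^{mn}:D^m_P\To D^n_P$ is a segment for all $m\leq n$. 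The second induction is a $\Pi^1_2$-induction showing that each $D^n_P$ is a dilator; the statement ``$D^n_P$ is a dilator'' reads $\forall X\,(X\text{ well order}\to D^n_P(X)\text{ well founded})$ and has complexity $\Pi^1_2$ with $n$ free, so hypothesis~(i) is exactly what is needed. The base case is trivial as $D^0_P$ is constant with value~$0$, and the step uses that~$P$ is a $2$-ptyx to pass from the dilator $D^n_P$ to the dilator $D^{n+1}_P:=P(D^n_P)$.

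From these two ingredients I conclude that $D_P(X)$ is well founded for any well order~$X$, as follows. First I verify that each component $\mu^n_X:D^n_P(X)\to D_P(X)$ from the proof of Proposition~\ref{prop:fixed-point-limit} has initial-segment image. Given $\sigma=\mu^n_X(\sigma_0)=(k,\sigma_0')$ with $k\leq n$ and $\tau=(m,\tau_0)<_{D_P(X)}\sigma$, a case split on $m$ versus $k$ either yields $m\leq k$ (so $\tau=\mu^n_X(\nu^{mn}_X(\tau_0))\in\rng(\mu^n_X)$) or yields $m>k$, in which case $\tau_0<_{D^m_P(X)}\nu^{km}_X(\sigma_0')$ together with the segment property of $\nu^{km}_X=\nu^{m-1}_X\circ\nu^{k,m-1}_X$ forces $\tau_0\in\rng(\nu^{m-1}_X)$, contradicting $(m,\tau_0)\in D_P(X)$. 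Given a descending sequence $(\alpha_k)_k$ in $D_P(X)$, its first term $\alpha_0$ lies in $\rng(\mu^n_X)$ for some~$n$, the segment property keeps every subsequent $\alpha_k$ in the same range, and pulling back through the embedding $\mu^n_X$ produces a descending sequence in the well-founded order $D^n_P(X)$, the desired contradiction.

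The main obstacle I anticipate is the complexity bookkeeping: I must ensure that the recursive definition of $\{D^n_P\}_{n\in\mathbb N}$ from Definition~\ref{def:iterated-appl-ptyx} really can be realised in $\aca_0$ as a $\Delta^0_1$-definable family via effective recursion (as sketched after that definition), so that ``$D^n_P$ is a dilator'' has the advertised $\Pi^1_2$ complexity and the appeal to $\Pi^1_2$-induction is legitimate. Beyond this, the conceptual point to emphasise is that normality enters the proof at exactly one place, the arithmetical induction that each $\nu^n$ is a segment, but this is precisely what makes the direct limit preserve well foundedness; without normality one still obtains $D_P\cong P(D_P)$ by Corollary~\ref{cor:iso-fixed-point}, but $D_P$ need not be a dilator, as Example~\ref{ex:ptyx-succ} witnesses.
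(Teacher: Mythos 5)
Your proof follows the paper's argument essentially verbatim: the same cycle of implications, the same two inductions (an arithmetical one for the segment property of the $\nu^n$ and a $\Pi^1_2$-induction for the dilator property of the $D^n_P$), and the same descending-sequence argument exploiting the segment property of the limit morphisms $\mu^n$. The only detail you gloss over is that $D_P$ is a \emph{coded} predilator, so well-foundedness must be verified for the class-sized extension $\overline{D_P}(X)$ with $X$ an arbitrary countable well order; the paper handles this by invoking Lemma~\ref{lem:ext-segment} to transfer the segment property to $\overline{\mu^n}$, after which your argument goes through unchanged.
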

We point out that the theorem asserts an equivalence between schemas (cf.~the detailed explanation after the statement of Theorem~\ref{thm:fixed-point-to-induction}).
\begin{proof}
To see that~(ii) implies~(iii), it suffices to recall that we have $P(D_P)\cong D_P$, by Corollary~\ref{cor:iso-fixed-point}. The claim that (iii) implies (i) is the result of Theorem~\ref{thm:fixed-point-to-induction}. To establish the remaining implication from~(i) to~(ii), we consider a normal \mbox{$2$-}ptyx~$P$. In view of Definition~\ref{def:coded-dilator}, being a (coded) dilator is a $\Pi^1_2$-property. Given that~$P$ is a ptyx, we can use the induction principle from~(i) to show that $D^n_P$ is a dilator for every~$n\in\mathbb N$ (cf.~Definition~\ref{def:iterated-appl-ptyx}). The morphism~$\nu^0:D^0_P\To D^1_P$ is a segment in the sense of Definition~\ref{def:segment}, since each of its components is the empty function. Given that~$P$ is normal, another (arithmetical) induction over~$\mathbb N$ shows that all morphisms $\nu^n:D^n_P\To D^{n+1}_P$ are segments. From Proposition~\ref{prop:fixed-point-limit} we know that $D_P$ is a direct limit of the predilators~$D^n_P$. Let us show that the morphisms
\begin{equation*}
\mu^n:D^n_P\To D_P
\end{equation*}
that witness this fact are segments as well. For this purpose, we consider a (finite) order~$X$ and an inequality
\begin{equation*}
\sigma<_{D_P(X)}\mu^n_X(\tau)
\end{equation*}
with $\sigma\in D_P(X)$ and $\tau\in D^n_P(\tau)$. We need to show that $\sigma$ lies in the range of~$\mu^n_X$. Since $D_P$ is a direct limit, we have $\sigma=\mu^k_X(\rho)$ for some $k\geq n$. To see this, invoke the proof of Theorem~\ref{thm:class-coded-equiv} to write $\sigma=D_P(\iota_a^X\circ\en_a)(\sigma_0)$ with $(|a|,\sigma_0)\in\tr(D_P)$. By Proposition~\ref{prop:fixed-point-limit} (in combination with Proposition~\ref{prop:direct-limit-dilators}), we have $(|a|,\sigma_0)\in\rng(\mu^m)$ for some number~$m\in\mathbb N$. In view of Definition~\ref{def:trace-morphism}, this yields $\sigma_0=\mu^m_{|a|}(\sigma_1)$ for some $\sigma_1\in D^m_P(|a|)$. By setting $k:=\max\{m,n\}$ and $\rho:=\nu^{mk}_X\circ D^m_P(\iota_a^X\circ\en_a)(\sigma_1)$, we indeed get
\begin{equation*}
\sigma=D_P(\iota_a^X\circ\en_a)\circ\mu^m_{|a|}(\sigma_1)=\mu^m_X\circ D^m_P(\iota_a^X\circ\en_a)(\sigma_1)=\mu^k_X(\rho).
\end{equation*}
The inequality above now yields
\begin{equation*}
\mu^k_X(\rho)=\sigma<_{D_P(X)}\mu^n_X(\tau)=\mu^k_X\circ\nu^{nk}_X(\tau).
\end{equation*}
Since each component of $\mu^k$ is an embedding, we get~$\rho<_{D^k_P(X)}\nu^{nk}_X(\tau)$. As a composition of segments, the morphism $\nu^{nk}=\nu^{k-1}\circ\dots\circ\nu^n$ is a segment itself. We may thus write $\rho=\nu^{nk}_X(\rho_0)$ with $\rho_0\in D^n_P(X)$. This yields
\begin{equation*}
\sigma=\mu^k_X(\rho)=\mu^k_X\circ\nu^{nk}_X(\rho_0)=\mu^n_X(\rho_0)\in\rng(\mu^n_X),
\end{equation*}
as needed to show that $\mu^n$ is a segment. To prove that $D_P$ is a (coded) dilator, we need to consider the class-sized extension~$\overline{D_P}$ (cf.~Definition~\ref{def:coded-dilator}). Aiming at a contradiction, we assume that~$X$ is a well order with a descending sequence
\begin{equation*}
(a_0,\sigma_0)>_{\overline{D_P}(X)}(a_1,\sigma_1)>_{\overline{D_P}(X)}\ldots
\end{equation*}
in~$\overline{D_P}(X)$. Since~$D_P$ is a direct limit, we get $(a_0,\sigma_0)\in\rng((\overline{\mu^n})_X)$ for some $n\in\mathbb N$, where $\overline{\mu^n}:\overline{D^n_P}\To\overline{D_P}$ is the class-sized extension from Lemma~\ref{lem:morphism-dils-extend}. To see this, recall that Definition~\ref{def:extend-coded-dil} requires $(|a_0|,\sigma_0)\in\tr(D_P)$. By Proposition~\ref{prop:fixed-point-limit} we get
\begin{equation*}
(|a_0|,\sigma_0)=\tr(\mu^n)(|a_0|,\tau_0)=(a,\mu^n_{|a_0|}(\tau_0))
\end{equation*}
for some $n\in\mathbb N$ and some $(|a_0|,\tau_0)\in\tr(D^n_P)$. The latter ensures $(a_0,\tau_0)\in\overline{D^n_P}(X)$, so that we indeed get
\begin{equation*}
(a_0,\sigma_0)=(a,\mu^n_{|a_0|}(\tau_0))=(\overline{\mu^n})_X(a_0,\tau_0)\in\rng((\overline{\mu^n})_X).
\end{equation*}
According to Lemma~\ref{lem:ext-segment}, the class-sized extension $\overline{\mu^n}$ is also a segment. For the fixed~$n$ and any $i>0$, we can thus rely on $(a_i,\sigma_i)<_{\overline{D_P}(X)}(a_0,\sigma_0)\in\rng((\overline{\mu^n})_X)$ to infer $(a_i,\sigma_i)\in\rng((\overline{\mu^n})_X)$, say $(a_i,\sigma_i)=(\overline{\mu^n})_X(a_i,\tau_i)$. Since the components of $\overline{\mu^n}$ are order embeddings, we have a descending sequence
\begin{equation*}
(a_0,\tau_0)>_{\overline{D^n_P}(X)}(a_1,\tau_1)>_{\overline{D^n_P}(X)}\ldots
\end{equation*}
in $\overline{D^n_P}(X)$. This is the desired contradiction: since $D^n_P$ is a dilator (as we have shown by $\Pi^1_2$-induction) and~$X$ is a well order, we know that $\overline{D^n_P}(X)$ is well founded.
\end{proof}

\bibliographystyle{amsplain}
\bibliography{Fixed-point-ptykes}

\end{document}